\algrenewcommand\algorithmicrequire{\textbf{Input:}}
\algrenewcommand{\algorithmiccomment}[1]{\hfill{\color{gray}\# #1}}
\newcommand*\Let[2]{\State #1 $\gets$ #2}
\newtheorem{theorem}{Theorem}
\newtheorem{proposition}[theorem]{Proposition}
\newtheorem{lemma}[theorem]{Lemma}
\newtheorem{observation}[theorem]{Observation}
\newtheorem{corollary}[theorem]{Corollary}
\newtheorem{definition}[theorem]{Definition}
\newtheorem{conjecture}[theorem]{Conjecture}
\newtheorem{claim}{Claim}[theorem]
\newcommand{\smallqed}{\hfill{\tiny $\left(\Box\right)$}}
\newcommand{\claimproof}{\noindent\emph{Proof of claim.} }
\newtheorem{problem}[theorem]{Problem}
\newcommand{\SPG}{\mathcal{SP}}
\newcommand{\SP}[1]{\mathcal{SP}_{#1}}
\begin{document}

\title{Homomorphism bounds and edge-colourings\\of $K_4$-minor-free graphs}
\author{Laurent Beaudou\footnote{\noindent LIMOS - CNRS UMR 6158, Universit\'e Clermont Auvergne, Clermont-Ferrand (France). E-mails: \{laurent.beaudou,florent.foucaud\}@isima.fr}
\and Florent Foucaud\footnotemark[1]
\and Reza Naserasr\footnote{\noindent CNRS - LIAFA UMR 7089, Université Paris Diderot - Paris 7 (France). E-mail: reza@lri.fr}}

\maketitle




\begin{abstract}

We present a necessary and sufficient condition for a graph of
odd-girth $2k+1$ to bound the class of $K_4$-minor-free graphs of
odd-girth (at least) $2k+1$, that is, to admit a homomorphism from any
such $K_4$-minor-free graph. This yields a polynomial-time algorithm
to recognize such bounds. Using this condition, we first prove that
every $K_4$-minor free graph of odd-girth $2k+1$ admits a homomorphism
to the projective hypercube of dimension $2k$. This supports a conjecture
of the third author which generalizes the four-color theorem and
relates to several outstanding conjectures such as Seymour's
conjecture on edge-colorings of planar graphs. Strengthening this
result, we show that the Kneser graph $K(2k+1,k)$ satisfies the
conditions, thus implying that every $K_4$-minor free graph of
odd-girth $2k+1$ has fractional chromatic number exactly
$2+\frac{1}{k}$. Knowing that a smallest bound of odd-girth $2k+1$
must have at least ${k+2 \choose 2}$ vertices, we build nearly
optimal bounds of order $4k^2$. Furthermore, we conjecture that the
suprema of the fractional and circular chromatic numbers for
$K_4$-minor-free graphs of odd-girth $2k+1$ are achieved by a same
bound of odd-girth $2k+1$. If true, this improves, in the homomorphism
order, earlier tight results on the circular chromatic number of
$K_4$-minor-free graphs.  We support our conjecture by proving it for
the first few cases. Finally, as an application of our work, and after
noting that Seymour provided a formula for calculating the edge-chromatic
number of $K_4$-minor-free multigraphs, we show that stronger results
can be obtained in the case of $K_4$-minor-free regular multigraphs.
\end{abstract}

\section{Introduction}

In this paper, graphs are simple and loopless. While loops will never be considered, multiple edges 
will be considered, but graphs containing multiple edges will specifically be referred to as 
multigraphs. 
A homomorphism $f$ of a graph $G$ to a graph $H$ is a mapping of $V(G)$ to $V(H)$ that preserves 
the edges, that is, if $x$ and $y$ are adjacent in $G$, then $f(x)$ and $f(y)$ are adjacent in $H$. 
If such a homomorphism exists, we note $G\to H$. Observe that a homomorphism of a graph $G$ to the 
complete graph $K_k$ is the same as a proper $k$-colouring of $G$. Given a class $\mathcal C$ of 
graphs and a graph $H$, we say that $H$ \emph{bounds} $\mathcal{C}$ if every graph in $\mathcal{C}$ 
admits a homomorphism to $H$. We also say that $\mathcal C$ is \emph{bounded} by $H$, or that $H$ is 
a \emph{bound} for $\mathcal C$. For example, the Four-Colour Theorem states that $K_4$ bounds the 
class of planar graphs. It is of interest to also ask for (small) bounds having some additional 
properties. In this paper we study the problem of determining bound(s) of smallest possible order 
for the class of $K_4$-minor-free graphs of given odd-girth, with the restriction that the bound 
itself also has the same odd-girth (the \emph{odd-girth} of a graph is the smallest length of an odd cycle).

\paragraph{Homomorphism bounds and minors} The \emph{core} of a graph $G$ is the smallest subgraph of $G$ to which $G$ admits a homomorphism (it is known to be unique up to isomorphism). A graph $G$ is a \emph{core} if it is its own core. For additional concepts in graph homomorphisms, we refer to the book by Hell and Ne\v{s}et\v{r}il~\cite{HNbook}. Let $\mathcal I=\{I_1,I_2, \ldots, I_i\}$ be a class of cores. We define $forb_h(\mathcal I)$ to be the class of all graphs to which no member of $\mathcal I$ admits a homomorphism. For example for $\mathcal{I}=\{K_n\}$, $forb_h(\mathcal I)$ is the class of $K_n$-free graphs and for $\mathcal{I}=\{C_{2k-1}\}$ it is the class of graphs of odd-girth at least $2k+1$. Similarly, given $\mathcal J=\{J_1,J_2, \ldots, J_j\}$, we use the notation $forb_m(\mathcal J)$ to denote the class of all graphs that have no member of $\mathcal J$ as a minor. (Recall that a graph $H$ is a \emph{minor} of a graph $G$ if $H$ can be obtained from $G$ by a sequence of edge-contractions and vertex- and edge-deletions.) The following is a fundamental theorem in the study of the relation between minors and homomorphisms.

\begin{theorem}[Ne\v{s}et\v{r}il and Ossona de Mendez \cite{NO08}]\label{thm:nes-OdM} Given a finite set $\mathcal I$ of connected graphs and a finite set $\mathcal J$ of graphs, there is a graph in $forb_h(\mathcal I)$ which is a bound for the class $forb_h(\mathcal I) \cap forb_m(\mathcal J)$.
\end{theorem}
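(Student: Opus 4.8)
The plan is to reduce the problem to graphs of bounded tree-depth by means of low tree-depth colourings, solve it in that restricted setting, and then lift the solution to the whole class by a product-type construction. Throughout I assume, as I may, that $forb_m(\mathcal J)$ is a proper minor-closed class (the remaining cases being vacuous or trivial). Set $p$ to be the maximum number of vertices of a graph in $\mathcal I$; by the low tree-depth colouring theorem (a cornerstone of the Nešetřil--Ossona de Mendez theory, \cite{NO08}) there is $N=N(p,\mathcal J)$ such that every $G\in forb_m(\mathcal J)$ admits a proper colouring $c\colon V(G)\to[N]$ in which the union of any $j\le p$ colour classes induces a subgraph of tree-depth at most $j$. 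Then for any $G\in forb_h(\mathcal I)\cap forb_m(\mathcal J)$ carrying such a colouring $c$, and any $S\subseteq[N]$ with $|S|\le p$, the induced subgraph $G_S:=G[c^{-1}(S)]$ has tree-depth at most $|S|\le p$ and, being a subgraph of $G$, still lies in $forb_h(\mathcal I)$.

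\textbf{Key lemma (the bounded tree-depth case).} The heart of the argument is to show that for every $d$ the class of graphs of tree-depth at most $d$ lying in $forb_h(\mathcal I)$ admits a bound $T_d$ that itself lies in $forb_h(\mathcal I)$. I would prove this by induction on $d$. For $d=1$ the class consists of edgeless graphs and $T_1=K_1$ works, unless some member of $\mathcal I$ is edgeless (hence equal to $K_1$, since the members of $\mathcal I$ are connected), in which case $forb_h(\mathcal I)=\emptyset$. For the inductive step, a connected graph $G$ with $\mathrm{td}(G)\le d+1$ has a root $r$ with $\mathrm{td}(G-r)\le d$, so $G-r\to T_d$ by induction, and one must attach to $T_d$ a new vertex able to host $r$. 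One cannot make this vertex adjacent to all of $V(T_d)$, nor to an arbitrary subset, without risking the creation of a homomorphic image of a member of $\mathcal I$; instead one adds, for each subset $W\subseteq V(T_d)$ that is ``safe'' --- meaning that attaching to $T_d$ simultaneously all the relevant new vertices still produces no homomorphic image of a member of $\mathcal I$ --- a new vertex $\rho_W$ joined exactly to $W$. Proving that enough safe subsets $W$ occur (so that every possible pair $(G-r,r)$ can be accommodated) and that the resulting graph $T_{d+1}$ stays in $forb_h(\mathcal I)$ is the delicate point: here the connectedness of the members of $\mathcal I$, which forces any homomorphic image of such a member into $T_{d+1}$ to ``descend'' through at most $p$ levels, together with a minimal-counterexample analysis, is essential. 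I expect this step to be the main obstacle.

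\textbf{Assembling the global bound.} Set $T:=T_p$. Define $H$ to have as vertices the pairs $(i,\tau)$ where $i\in[N]$ and $\tau$ assigns to each $S\subseteq[N]$ with $i\in S$ and $|S|\le p$ a vertex $\tau(S)\in V(T)$, and join $(i,\tau)$ to $(j,\sigma)$ exactly when $i\ne j$ and $\tau(S)\sigma(S)\in E(T)$ for every $S$ with $\{i,j\}\subseteq S$ and $|S|\le p$. Given $G\in forb_h(\mathcal I)\cap forb_m(\mathcal J)$ with colouring $c$ and, for each admissible $S$, a homomorphism $\phi_S\colon G_S\to T$ supplied by the key lemma, the map $v\mapsto(c(v),\tau_v)$ with $\tau_v(S):=\phi_S(v)$ is a homomorphism $G\to H$: distinctness of first coordinates on an edge comes from $c$ being proper, while for an edge $uv$ and any $S\supseteq\{c(u),c(v)\}$ with $|S|\le p$ one has $uv\in E(G_S)$ since $G_S$ is induced, so $\phi_S(u)\phi_S(v)\in E(T)$. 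Finally $H\in forb_h(\mathcal I)$: a homomorphism $\psi$ from a connected $I\in\mathcal I$ into $H$ has its first coordinate ranging over a set $S_0\subseteq[N]$ with $|S_0|\le|V(I)|\le p$, and then $x\mapsto\tau_{\psi(x)}(S_0)$ --- well defined because every first coordinate lies in $S_0$ --- is a homomorphism $I\to T$, contradicting $T=T_p\in forb_h(\mathcal I)$. Hence $H$ lies in $forb_h(\mathcal I)$ and bounds $forb_h(\mathcal I)\cap forb_m(\mathcal J)$, as required.
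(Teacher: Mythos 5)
The paper does not actually prove Theorem~\ref{thm:nes-OdM}; it is quoted from~\cite{NO08}, so your proposal can only be measured against the argument in that reference. Your global architecture is the right one and matches it: a low tree-depth colouring with $N(p,\mathcal J)$ colours, a bound $T_d$ inside $forb_h(\mathcal I)$ for the graphs of tree-depth at most $d$ in $forb_h(\mathcal I)$, and the product-type graph $H$ on pairs $(i,\tau)$. The assembly step is carried out correctly, including the verification that $H\in forb_h(\mathcal I)$ via the projection $x\mapsto\tau_{\psi(x)}(S_0)$, which is exactly where the connectedness of the members of $\mathcal I$ and the inequality $|V(I)|\le p$ are needed.

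The gap is the key lemma, which you explicitly leave open, and it is not a routine verification: it is the real content of the theorem. Moreover, the strategy you sketch for it fails as stated. In the inductive step you take an arbitrary homomorphism $\phi\colon G-r\to T_d$ and hope to send the root $r$ to a vertex $\rho_W$ with $W\supseteq\phi(N_G(r))$ for some ``safe'' $W$. But $\phi$ is chosen with no knowledge of $r$, and $\phi(N_G(r))$ need not be contained in any safe set. Already for $\mathcal I=\{K_3\}$, the set $N_G(r)$ is independent in $G$, yet $\phi$ may map it onto a set of vertices spanning edges of $T_d$, and then any $W$ containing this image creates a triangle through $\rho_W$; so no safe $W$ is available, and one would have to show that $\phi$ can be re-chosen compatibly with $r$, which is a genuinely new claim rather than an application of the induction hypothesis. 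The argument in~\cite{NO08} (building on the authors' earlier tree-depth paper) proceeds differently at this point: one proves that every graph of tree-depth at most $d$ retracts onto an induced subgraph of order at most some $N(d,p)$ with the same existence-of-homomorphisms profile with respect to all graphs on at most $p$ vertices (a pruning argument on the elimination tree, merging rooted subtrees of equal type), and then takes $T_d$ to be the disjoint union of the finitely many such small graphs that lie in $forb_h(\mathcal I)$; connectedness of the members of $\mathcal I$ guarantees that this disjoint union is still in $forb_h(\mathcal I)$. Until the key lemma is established by such an argument, the proposal is incomplete at its central step.
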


It is worthwhile to note that a bound for $forb_h(\mathcal I) \cap
forb_m(\mathcal J)$ belonging itself to $forb_h(\mathcal I) \cap
forb_m(\mathcal J)$ may not exist, for example there is no
triangle-free planar graph bounding the class of triangle-free planar
graphs~\cite{MNN06} (see Theorem~3.37 in the third author's PhD
thesis~\cite{RezaThesis} for a more general statement for any
odd-girth, and~\cite{NN12} for a further generalization). By a similar argument, it is easily observed that there is
no $K_4$-minor-free graph of odd-girth $2k+1$~($k \geq 2$) bounding
the class of $K_4$-minor-free graphs of odd-girth $2k+1$.

Even though the proof of Theorem~\ref{thm:nes-OdM} is constructive,
the bound obtained by this construction is generally very large. Thus,
we pose the following problem.

\begin{problem}\label{optimal bound}
What is a graph in $forb_h(\mathcal I)$ of smallest possible order that bounds 
$forb_h(\mathcal I) \cap forb_m(\mathcal J)$?
\end{problem}

Finding the answer to Problem~\ref{optimal bound} can be a very difficult task in general. For example, Hadwiger's conjecture would be answered if we find the optimal solution with $\mathcal I =\{K_n\}$ and $\mathcal J =\{K_n\}$. Nevertheless, the answer to some special cases is known. As an example, the main result of~\cite{G88} implies in particular the following claim, which corresponds to $\mathcal I =\{C_{2k-1}\}$ and $\mathcal J =\{K_4, K_{2,3}\}$.

\begin{theorem}[Gerards \cite{G88}]\label{Gerard}
The cycle $C_{2k+1}$ bounds the class of outerplanar graphs of odd-girth at least~$2k+1$.
\end{theorem}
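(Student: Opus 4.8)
The plan is to prove the statement in the form: \emph{every outerplanar graph $G$ of odd-girth at least $2k+1$ admits a homomorphism to $C_{2k+1}$}, by induction on $|E(G)|$. Throughout, identify $V(C_{2k+1})$ with $\mathbb Z_{2k+1}$, so that $i\sim j$ iff $i-j\equiv\pm 1\pmod{2k+1}$. The combinatorial heart of the argument is an elementary lemma on walks: for $x,y\in\mathbb Z_{2k+1}$ and $p\ge 0$, there is a walk of length $p$ from $x$ to $y$ in $C_{2k+1}$ if and only if some integer $s$ with $|s|\le p$ and $s\equiv p\pmod 2$ satisfies $s\equiv y-x\pmod{2k+1}$. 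Consequently such a walk exists for \emph{every} pair $(x,y)$ as soon as $p\ge 2k$, and whenever $x\sim y$ it exists for every \emph{odd} $p\ge 1$. (The special case $x=y$, $p=2k+1$ recovers the classical fact that $C_{2\ell+1}\to C_{2k+1}$ when $\ell\ge k$.)

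First I would handle the non-$2$-connected cases. If $G$ is disconnected, treat each component separately. If $G$ has a cut-vertex $z$, write $G=G_1\cup G_2$ with $V(G_1)\cap V(G_2)=\{z\}$ and each $G_i$ having fewer edges; induction gives $\psi_i\colon G_i\to C_{2k+1}$, and since $C_{2k+1}$ is vertex-transitive I compose $\psi_2$ with a rotation so that $\psi_1(z)=\psi_2(z)$, whereupon $\psi_1$ and $\psi_2$ glue into a homomorphism of $G$. So I may assume $G$ is $2$-connected; then in its outerplanar embedding the outer face is bounded by a Hamiltonian cycle $H$ and every other edge is a chord. If $G$ has no chord then $G=C_n$: if $n$ is even it is bipartite and maps onto an edge of $C_{2k+1}$, and if $n$ is odd then $n\ge 2k+1$ by hypothesis and the walk lemma gives $C_n\to C_{2k+1}$.

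Now suppose $G$ has a chord. Choose a chord $e=uv$ cutting off a minimal region, so that one arc $P$ of $H$ between $u$ and $v$, together with $e$, bounds an inner face $D$; then every internal vertex of $P$ lies only on $H$ and on $D$, hence has degree $2$ in $G$, and $p:=|P|\ge 2$ (as $G$ is simple, $uv$ cannot be both a chord and an edge of $H$). Let $G'=G-\mathrm{int}(P)$; note $e\in E(G')$, that $G'$ is outerplanar with fewer edges, and that $G'$ has odd-girth at least $2k+1$ because it is a subgraph of $G$. By induction there is $\phi'\colon G'\to C_{2k+1}$, and it remains only to extend $\phi'$ over $\mathrm{int}(P)$, i.e.\ to choose a walk of length $p$ from $\phi'(u)$ to $\phi'(v)$ in $C_{2k+1}$. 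If $p$ is even then $D$ is an odd cycle, so $p+1\ge 2k+1$, i.e.\ $p\ge 2k$, and the walk lemma supplies such a walk for any endpoints. If $p$ is odd then $\phi'(u)\sim\phi'(v)$ since $e=uv\in E(G')$, and the walk lemma supplies a walk of the odd length $p$ between these adjacent vertices. In either case we obtain $\phi\colon G\to C_{2k+1}$, completing the induction.

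The routine ingredients are the walk lemma and the reductions to the $2$-connected case. The subtle step — and the reason for defining $G'$ by removing $\mathrm{int}(P)$ (thereby keeping $e$) rather than, say, by deleting the chord $e$ — is the even-face case: when $p$ is small an arbitrary homomorphism of a reduced graph need not send $u$ and $v$ to adjacent vertices, and then there may be no $u$--$v$ walk of length $p$ at all, whereas retaining $e$ forces $\phi'(u)\sim\phi'(v)$, after which an odd-length walk is always available. I would also double-check the two structural facts used without proof: that a $2$-connected outerplanar graph has a Hamiltonian outer boundary in its outerplanar embedding, and that a chord cutting off a minimal region bounds an inner face all of whose remaining edges lie on $H$. (A tempting alternative — orienting $G$ so that every inner face has signed edge-length divisible by $2k+1$, which would also yield the homomorphism — runs into trouble because a greedy orientation along the weak-dual tree can get stuck at a face of length exactly $2k+1$ incident with several chords; the global recursion above avoids this.)
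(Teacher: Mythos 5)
Your proof is correct, and it is necessarily a different route from the paper's, because the paper gives no proof of this statement at all: it simply cites Gerards~\cite{G88}, whose theorem is far more general (it concerns graphs with no odd-$K_4$ subdivision, not just outerplanar graphs). Your argument is a self-contained elementary induction: reduce to the $2$-connected case, use the Hamiltonian outer cycle, peel off an innermost face $D$ bounded by a chord $e=uv$ and an arc $P$, apply induction to $G-\mathrm{int}(P)$, and extend via the parity walk lemma in $C_{2k+1}$. All the steps check out, including the two cases of the extension ($p$ even forces $p\geq 2k$ by odd-girth, $p$ odd uses that $e$ survives in $G'$ so $\phi'(u)\sim\phi'(v)$), and the structural facts you flag (Hamiltonian outer boundary, innermost chord bounding a face whose internal vertices have degree~$2$, no crossing chords) are standard and true; your observation that one must keep $e$ rather than delete it is exactly the right subtlety. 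It is worth noting that your proof cannot be replaced by an appeal to the paper's machinery for $K_4$-minor-free graphs: $C_{2k+1}$ does \emph{not} bound $\SP{2k+1}$ for $k\geq 2$ (e.g.\ $T_5(2,2,2)\in\SP{5}$ has no homomorphism to $C_5$), so the outerplanarity hypothesis (equivalently, excluding $K_{2,3}$ as well as $K_4$) is essential, and it enters your argument precisely through the fact that every inner face of a $2$-connected outerplanar graph meets the outer cycle in a single path. In spirit your extension step is a simpler analogue of the paper's technique in Theorem~\ref{thm:TripleProperties}, where partial homomorphisms are extended across pieces after forcing the images of two attachment vertices to lie at a controlled distance; here the ear structure of outerplanar graphs replaces the $2$-tree structure. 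The only cosmetic point is the induction measure in the disconnected case (a component may contain all the edges if the others are isolated vertices), which is trivially repaired by inducting on $|V(G)|+|E(G)|$ or by discarding isolated vertices first.
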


Problem~\ref{optimal bound} asks for optimal bounds in terms of the order. As observed 
in~\cite{NO05,NO08}, such a bound (unless it is itself in the class $forb_m(\mathcal J)$) cannot be 
optimal with respect to the homomorphism order. More precisely, if $B\in forb_h(\mathcal I)$ is a 
bound for $forb_h(\mathcal I) \cap forb_m(\mathcal J)$ and $B\notin forb_m(\mathcal J)$, then there 
exists another bound $B'\in forb_h(\mathcal I)$ such that $B'\to B$ and $B \nrightarrow B'$.

One of the interesting cases of Problem~\ref{optimal bound} is when
$\mathcal I=\{C_{2k-1}\}$ and $\mathcal J=\{K_{3,3}, K_5\}$. In other
words, the problem consists of finding a smallest graph of odd-girth
$2k+1$ which bounds the class of planar graphs of odd-girth at least
$2k+1$. This particular problem was studied by several authors, see
for example~\cite{G05,MNN06,N07,N13}. A proposed answer was formulated
as a conjecture, using the notions that are developed next.

\paragraph{Projective hypercubes and bounds for planar graphs}
Hypercubes are among the most celebrated families of graphs. Recall
that the \emph{hypercube} of dimension $d$, denoted $H(d)$, is the
graph whose vertices are all binary words of length $d$ and where two
such words are adjacent if their Hamming distance is 1. In other
words, $H(d)$ is the Cayley graph $(\mathbb{Z}_2^d, \{e_1,e_2, \ldots,
e_{d}\})$ where $\{e_1,e_2, \ldots, e_{d}\}$ is the standard
basis. With a natural embedding of the $d$-dimensional hypercube
$H(d)$ on the $d$-dimensional sphere $S^d$, hypercubes can be regarded
as the discrete approximation of $S^d$. Recall that the projective
space of dimension $d$ is obtained from identifying antipodal points
of $S^{d+1}$. Then, the image of $H(d+1)$ under such projection is
called the \emph{projective hypercube} of dimension $d$, denoted
$PC(d)$ (sometimes also called \emph{projective cube} or \emph{folded
  cube}). It is thus the graph obtained from the $(d+1)$-dimensional
hypercube by identifying all pairs of antipodal vertices. It can be
checked that this is the same as the graph obtained from the
$d$-dimensional hypercube by adding an edge between every pair of
antipodal vertices. Thus, $PC(d)$ is the Cayley graph
$(\mathbb{Z}_2^{d}, S=\{e_1,e_2, \ldots, e_{d},J\})$ where $\{e_1,e_2,
\ldots, e_{d}\}$ is the standard basis and $J$ is the all-$1$
vector. Therefore, $PC(1)$ is $K_2$, $PC(2)$ is $K_4$, $PC(3)$ is
$K_{4,4}$ and $PC(4)$ is the celebrated Clebsch graph (which is also
referred to as the Greenwood-Gleason graph for its independent
appearance in Ramsey theory).

As a generalization of the Four-Colour Theorem, the following conjecture was proposed by the third author.

\begin{conjecture}[Naserasr \cite{N07}]\label{PlanarToPC}
The projective hypercube $PC(2k)$ bounds the class of planar graphs of odd-girth at least $2k+1$.
\end{conjecture}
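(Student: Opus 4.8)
The conjecture contains the Four-Colour Theorem as its case $k=1$ (there $PC(2)=K_4$ and ``odd-girth at least $3$'' is no restriction, so the statement is exactly the Four-Colour Theorem), so no proof can avoid that theorem; the plan is instead to \emph{assume} it and to reduce the general case to an edge-colouring statement of the same flavour, namely Seymour's conjecture on edge-colourings of planar (multi)graphs. The first move is to reformulate homomorphisms to $PC(2k)$ combinatorially. Since $PC(2k)$ is the Cayley graph $(\mathbb{Z}_2^{2k},S)$ with $S=\{e_1,\dots,e_{2k},J\}$, a homomorphism of a connected graph $G$ to $PC(2k)$ is exactly a labelling $\ell\colon E(G)\to S$ whose values sum to $0$ in $\mathbb{Z}_2^{2k}$ along every cycle (a \emph{tension} valued in the connection set): given $\ell$ one integrates along paths from a fixed base vertex to recover the vertex-map, and conversely $\ell(uv):=\phi(u)+\phi(v)$. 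Because $e_1+\dots+e_{2k}+J=0$ is the only linear dependency among the $2k+1$ generators, I would lift to $\mathbb{Z}_2^{2k+1}$ and view $\ell$ as an edge-colouring $c\colon E(G)\to\{1,\dots,2k+1\}$; the tension condition then says that on every cycle $C$ each colour occurs a number of times $\equiv |C|\pmod 2$.

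Next I would invoke planar duality. Embedding $G$ and passing to the dual $G^{*}$ turns the tension on $G$ into a $\mathbb{Z}_2^{2k}$-\emph{flow} on $G^{*}$ taking the same edge-values in $S$; in colour terms, at every vertex of $G^{*}$ all $2k+1$ colours appear with one common parity (all odd at odd-degree vertices, all even at even-degree vertices). Moreover a simple odd cycle of $G$ of length $\ell$ corresponds to an odd bond of $G^{*}$ of size $\ell$, and the minimum odd cut of any graph equals its minimum odd bond; hence the hypothesis \emph{odd-girth of $G$ at least $2k+1$} is equivalent to \emph{every odd edge-cut of $G^{*}$ has at least $2k+1$ edges}. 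In particular, when $G^{*}$ is $(2k+1)$-regular the flow forces each colour to appear exactly once at every vertex, i.e.\ it is a proper $(2k+1)$-edge-colouring, and the odd-cut condition makes $G^{*}$ precisely a planar \emph{$(2k+1)$-graph}. Thus, restricted to these instances, the conjecture asserts that every planar $(2k+1)$-graph is $(2k+1)$-edge-colourable --- which for $k=1$ is Tait's reformulation of the Four-Colour Theorem and in general is exactly the planar case of Seymour's conjecture.

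The remaining work is twofold. First, reduce an arbitrary planar $G$ of odd-girth at least $2k+1$ to the regular case: note that every odd-degree vertex of $G^{*}$ already has degree at least $2k+1$ (its star is an odd cut), so the target flow is degree-feasible, and on the dual side one pads the even-degree vertices and too-high odd-degree vertices by standard gadgets preserving planarity and the property ``minimum odd cut $\ge 2k+1$'', so that a $(2k+1)$-edge-colouring of the resulting planar $(2k+1)$-graph restricts to the required $S$-valued flow on $G^{*}$, hence to a homomorphism $G\to PC(2k)$. Second, supply the edge-colouring input itself; I would attempt this by a minimal-counterexample/discharging argument on the planar structure, where the dense local structure forced by ``minimum odd cut $\ge 2k+1$'' together with Euler's formula should produce a reducible configuration (a small cut, or a small subgraph where a partial colouring extends), exactly as in the cubic case $k=1$.

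The main obstacle is precisely this last edge-colouring statement: it is the planar case of Seymour's conjecture, and although $k=1$ is the Four-Colour Theorem, no elementary reduction from $k=1$ to larger $k$ is available, because raising the required odd-girth genuinely enlarges the family of dual graphs that must be coloured. I expect the reduction-to-regular step to be routine and the identification with Seymour's conjecture to be clean, but closing the gap for all $k$ to require either a full generalisation of the discharging method behind the Four-Colour Theorem or an independent resolution of Seymour's planar edge-colouring conjecture --- which is why, in this paper, the statement is established only for the $K_4$-minor-free subclass rather than for all planar graphs.
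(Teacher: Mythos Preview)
The statement you were asked to prove is a \emph{conjecture} in the paper, not a theorem; the paper does not prove it and explicitly presents it as open. So there is no ``paper's own proof'' to compare against. What the paper does prove is the restriction to $K_4$-minor-free graphs (its Theorem~\ref{BoundedByPC(2k)}), by an entirely different method based on the characterisation of partial $2$-trees and the all-$k$-good-triple machinery, not via duality or edge-colouring.

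Your proposal is not a proof either, and you say as much in the last paragraph: you reduce Conjecture~\ref{PlanarToPC} to the planar case of Seymour's edge-colouring conjecture and then stop, because that conjecture is open for general~$k$. This equivalence is already known and cited in the paper (Naserasr~\cite{N07}); your tension/flow reformulation and the duality translation are correct and essentially reproduce that argument. So the honest summary is: you have rediscovered the known reduction, not supplied a proof.

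One technical gap worth flagging in your reduction step. You propose to pass from an arbitrary planar $G$ of odd-girth $\ge 2k+1$ to the $(2k+1)$-regular case by ``padding'' $G^{*}$ with ``standard gadgets'' that preserve planarity and the condition on odd cuts. This is not standard and is where the actual work lies: attaching gadgets at vertices of $G^{*}$ can easily create new small odd cuts, and it is not clear how to do this uniformly for all $k$. The known route (used in~\cite{N07}) works on the primal side instead, via the Folding Lemma of Klostermeyer and Zhang~\cite{KZ00}, which shows that a planar graph of odd-girth $2k+1$ is homomorphically equivalent to one in which every facial cycle has length exactly $2k+1$; only then is the dual genuinely a planar $(2k+1)$-graph. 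Without that lemma (or a correct gadget construction), your reduction from the general case to the regular case is incomplete.
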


Since $PC(2)$ is isomorphic to $K_4$, the first case of this conjecture is the Four-Colour Theorem. 
The conjecture is related to determining the edge-chromatic number of a class of planar multigraphs, 
as we will explain later. Moreover, it is shown by Naserasr, Sen and Sun~\cite{NSS15} that no bound 
of odd-girth $2k+1$ of smaller order can exist. Thus this conjecture proposes a solution to 
Problem~\ref{optimal bound} for the case $\mathcal I=\{C_{2k-1}\}$ and 
$\mathcal J=\{K_{3,3}, K_5\}$.

\paragraph{Edge-colouring and fractional colouring}
Recall that colouring 
a graph $G$ corresponds to partitioning its vertices into independent sets, and 
the chromatic number of $G$, denoted $\chi(G)$, is the smallest size of such a 
partition. Analogously, edge-colouring a multigraph $G$ corresponds to 
partitioning its edge set into matchings, and the edge-chromatic number of $G$, 
denoted $\chi'(G)$, is the smallest size of such a partition. 
Each of these parameters can be regarded as an optimal solution 
to an integer program where independent sets (or matchings) are given
value $0$ or $1$ together with inequalities indicating that each vertex
(or edge) receives a total weight of at least~$1$, that is, 
it is coloured (or covered). 

After relaxing these possible values from $\{0, 1\}$ to all
non-negative real numbers, we obtain the notion of fractional
chromatic number, denoted $\chi_f(G)$, and fractional edge-chromatic
number of multigraphs, denoted $\chi_f'(G)$. The fractional chromatic
number of a graph $G$ can equivalently be defined as the smallest
value of $\frac{p}{q}$ over all positive integers $p,q$, $2q\leq p$,
such that $G \to K(p,q)$, where $K(p,q)$ is the \emph{Kneser graph}
whose vertices are all $q$-subsets of a $p$-set and where two vertices
are adjacent if they have no element in common.

While computing each of $\chi(G)$, $\chi_f(G)$ and $\chi'(G)$ are known to be NP-complete problems (see~\cite{K72}, \cite{LY94} and~\cite{H81} respectively), it is shown by Edmonds~\cite{E65} (see also~\cite{S79}) that $\chi'_f(G)$ can be computed in polynomial time. We refer to the book of Scheinermann and Ullman~\cite{SU97} for details.
For every multigraph $G$, the inequality $\chi_f'(G)\leq \chi'(G)$ clearly holds. Seymour conjectured that, up to an integer approximation, equality holds for every planar multigraph.

\begin{conjecture}[Seymour \cite{S75,S79a}]\label{EdgeColouringPlanars}
For each planar multigraph $G$, $\chi'(G)=\lceil\chi_f'(G)\rceil$.
\end{conjecture}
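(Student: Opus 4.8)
\medskip
\noindent We do not know how to settle Conjecture~\ref{EdgeColouringPlanars} in general --- it is one of the central open problems in the area --- but it is worth outlining the route by which the machinery of this paper bears on it, which is also the route that yields its $K_4$-minor-free case. The plan is to separate the two sides of the claimed equality. The inequality $\chi'(G)\ge\lceil\chi_f'(G)\rceil$ is immediate, since $\chi'(G)\ge\chi_f'(G)$ and $\chi'(G)$ is an integer. For the reverse, one first replaces $\chi_f'(G)$ by a closed formula: Edmonds' matching polytope theorem (\cite{E65}, see also~\cite{S79}) gives $\chi_f'(G)=\max\bigl\{\Delta(G),\ \Gamma(G)\bigr\}$ where $\Gamma(G)=\max\{|E(H)|/\lfloor|V(H)|/2\rfloor : H\subseteq G,\ |V(H)|\ge 2\}$, so that $\lceil\chi_f'(G)\rceil=\max\{\Delta(G),\lceil\Gamma(G)\rceil\}$. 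Next, standard regularisation and colour-criticality reductions (pad $G$ with a planar gadget to make it regular, then pass to an edge-colour-critical subgraph) reduce the conjecture to the assertion that for every $r\ge 1$, every \emph{planar $r$-graph} --- an $r$-regular planar multigraph in which every odd edge-cut has at least $r$ edges, equivalently an $r$-regular planar multigraph with $\chi_f'=r$ --- is properly $r$-edge-colourable. For $r=3$ this is Tait's reformulation of the Four-Colour Theorem, and the even cases reduce to odd ones.

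The heart of the plan is to route the odd case through planar duality and the projective hypercube. Fix a connected planar $(2k+1)$-graph $G^*$ with planar dual $G$. An $r$-edge-colouring of $G^*$ using the colour set $\{e_1,\ldots,e_{2k},J\}\subseteq\mathbb{Z}_2^{2k}$ makes each colour class a perfect matching, so the $2k+1$ edges at every vertex carry all $2k+1$ colours and hence sum to $e_1+\cdots+e_{2k}+J=0$; read off in $G$, this is exactly a $\mathbb{Z}_2^{2k}$-tension all of whose edge-values lie in $\{e_1,\ldots,e_{2k},J\}$, that is, a homomorphism $G\to PC(2k)$. Under the same duality the condition that every odd edge-cut of $G^*$ have at least $2k+1$ edges becomes the condition that every odd cycle of $G$ have length at least $2k+1$, i.e.\ that $G$ have odd-girth at least $2k+1$. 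Thus the planar $r$-graph case of the conjecture is essentially Conjecture~\ref{PlanarToPC}, that $PC(2k)$ bounds the class of planar graphs of odd-girth at least $2k+1$. When $G^*$ (equivalently $G$) is $K_4$-minor-free, the results of this paper apply directly: a $K_4$-minor-free graph of odd-girth $2k+1$ maps to $PC(2k)$, indeed to the much smaller Kneser graph $K(2k+1,k)$, and unwinding the duality recovers Seymour's theorem that $K_4$-minor-free multigraphs satisfy $\chi'(G)=\lceil\chi_f'(G)\rceil$, together with the sharper fractional information coming from the Kneser-graph bound.

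The main obstacle is precisely the step that is open: proving Conjecture~\ref{PlanarToPC} --- equivalently, that every planar $r$-graph is $r$-edge-colourable --- for all $r$. What makes the $K_4$-minor-free case tractable is the recursive, series-parallel structure of such graphs, which the polynomial-time recognisable ``bound'' condition developed in this paper is designed to exploit; for general planar graphs one has only discharging together with the known reductions (Seymour; Chudnovsky--Edwards--Seymour), which so far settle only small values of $r$. Pushing the argument to all planar graphs would require either extending the homomorphism-bound techniques from the $K_4$-minor-free to the planar setting --- where currently the only bound available is the enormous one supplied by Theorem~\ref{thm:nes-OdM} --- or a genuinely new structural handle on edge-colour-critical planar multigraphs, and this is where I expect all of the difficulty to lie.
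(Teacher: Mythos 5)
The statement you were asked about is Conjecture~\ref{EdgeColouringPlanars}, which the paper does not prove: it is stated as an open conjecture of Seymour, and the paper only (re)establishes its odd $r$-graph restriction for $K_4$-minor-free multigraphs (Theorem~\ref{thm:dual}), the full $K_4$-minor-free case being Seymour's theorem~\cite{S90}. You correctly recognise this and, rather than manufacturing a proof, give a survey of how the paper's machinery bears on the conjecture; that is the right call, and your description of the duality between $(2k+1)$-edge-colourings of a planar $(2k+1)$-graph and $PC(2k)$-colourings (i.e.\ tensions with values in the Cayley generating set) of its planar dual is exactly the mechanism used in the proof of Theorem~\ref{thm:dual}.

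Two caveats on points you state more confidently than the paper would. First, the paper presents Conjecture~\ref{EdgeColouringRegularPlanars} as the \emph{restriction} of Conjecture~\ref{EdgeColouringPlanars} to $r$-graphs, not as an equivalent reformulation; your claims that ``standard regularisation and colour-criticality reductions'' reduce the general conjecture to the $r$-graph case, and that ``the even cases reduce to odd ones'', are not asserted anywhere in the paper --- the known even cases $r=4,6,8$ each required a separate substantial proof by induction on $r$, and finding the perfect matching whose removal leaves an $(r-1)$-graph is precisely the hard step. Second, the duality you describe directly converts the planar $(2k+1)$-graph statement only into a homomorphism statement for planar graphs of odd-girth $2k+1$ \emph{all of whose faces have length $2k+1$}; upgrading this to the full Conjecture~\ref{PlanarToPC} requires the Folding Lemma of Klostermeyer and Zhang~\cite{KZ00}, and the paper explicitly notes that no analogue of that lemma is available inside the $K_4$-minor-free class, so that there one only has the implication from the homomorphism bound (Theorem~\ref{BoundedByPC(2k)}) to the edge-colouring result, not an equivalence. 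With those adjustments, your summary is consistent with what the paper actually establishes and with what remains open.
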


The restriction of Conjecture~\ref{EdgeColouringPlanars} to planar $3$-regular multigraphs corresponds to a claim of Tait (every bridgeless cubic planar graphs is 3-edge-colourable) from the late $19$th century~\cite{T80}. As Tait has shown, this is equivalent to the Four-Colour Theorem.

Conjecture~\ref{EdgeColouringPlanars} has been studied extensively for the special case of planar \emph{$r$-graphs}, for which the fractional edge-chromatic number is known to be exactly $r$~\cite{S75,S79}. An $r$-regular multigraph is an $r$-graph if for each set $X$ of an odd number of vertices, the number of edges leaving $X$ is at least $r$. Hence, Conjecture~\ref{EdgeColouringPlanars} restricted to this class is stated as follows.

\begin{conjecture}[Seymour \cite{S75,S79a}]\label{EdgeColouringRegularPlanars}
Every planar $r$-graph is $r$-edge-colourable.
\end{conjecture}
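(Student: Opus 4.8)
The plan is to reduce the conjecture, at least for odd $r=2k+1$, to Conjecture~\ref{PlanarToPC} by passing to the planar dual. So, given a planar $(2k+1)$-graph $G$, I would fix a plane embedding and let $G^*$ be its dual. Two properties of $G^*$ come for free. First, since $G$ is $(2k+1)$-regular, every face of $G^*$ is bounded by a closed walk of length $2k+1$, and $G^*$ has no loops (a loop in $G^*$ would be a bridge of $G$, impossible as a bridge is an odd edge-cut of size $1<2k+1$). Second, and crucially, edge-cuts of $G$ correspond to cycles of $G^*$, and a cut separating an \emph{odd} number of vertices of $G$ corresponds to an \emph{odd} cycle of $G^*$: if a cycle $C$ of $G^*$ encloses $t$ vertices of $G$, then summing the degrees of those $t$ vertices gives $(2k+1)t$, which is $\equiv t \pmod 2$, while this same sum also equals $|C|$ plus twice the number of edges of $G$ joining two of the enclosed vertices, so $|C|\equiv t\pmod 2$. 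Hence the hypothesis that $G$ is a $(2k+1)$-graph --- every odd cut has at least $2k+1$ edges --- is \emph{exactly} the statement that $G^*$ has odd-girth at least $2k+1$.

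Granting Conjecture~\ref{PlanarToPC}, we then get a homomorphism $\varphi\colon G^* \to PC(2k)$, i.e.\ a labelling of the faces of $G$ by vectors of $\mathbb{Z}_2^{2k}$ in which two faces sharing an edge receive labels differing by a generator of $S=\{e_1,\dots,e_{2k},J\}$ (recall $J=e_1+\dots+e_{2k}$). Assign to each edge $e$ of $G$, bordering faces $f$ and $f'$, the colour $c(e)=\varphi(f)+\varphi(f')\in S$; this uses $|S|=2k+1$ colours, and it remains only to check properness, i.e.\ that the colours $c_1,\dots,c_{2k+1}\in S$ seen cyclically around a vertex $v$ of $G$ are pairwise distinct. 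These colours telescope to $c_1+\dots+c_{2k+1}=0$, and a short parity count finishes it: writing $a_i$ for the multiplicity of $e_i$ and $b$ for that of $J$, we have $\sum_i a_i+b=2k+1$, while vanishing of the sum forces $a_i\equiv b\pmod 2$ for all $i$; if $b$ is even then all $a_i$ are even and the total is even, a contradiction, so $b$ is odd, every $a_i\ge 1$, hence $\sum_i a_i\ge 2k$, which together with $\sum_i a_i+b=2k+1$ and $b\ge 1$ forces $\sum_i a_i=2k$, $b=1$ and every $a_i=1$. Thus $G$ is $(2k+1)$-edge-colourable.

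For even $r$ the same dictionary applies with $\mathbb{Z}_2^{r-1}$ and generating set $\{e_1,\dots,e_{r-1},J\}$ in place of $PC(2k)$, but now $G^*$ has all faces of even length and one needs the corresponding homomorphism statement for that target; I would treat this case separately (or via a reduction of even $r$ to odd $r$) and, failing that, fall back on the known partial results for small $r$. This is also where the genuine difficulty lies: even with the duality in hand, the odd case \emph{is} Conjecture~\ref{PlanarToPC}, which contains the Four-Colour Theorem as the case $k=1$ (where $PC(2)=K_4$) and is wide open for $k\ge 2$, so an unconditional proof is out of reach along this route. The realistic programme --- and the one this paper actually carries out --- is therefore to establish the analogous homomorphism bounds, and hence the analogous edge-colouring statements, in the more tractable class of $K_4$-minor-free multigraphs, where suitable bounds of the right odd-girth can genuinely be constructed, together with the first open cases of $r$.
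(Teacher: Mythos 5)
The statement you were asked to prove is labelled as a \emph{conjecture} in the paper, and indeed it is Seymour's open conjecture: the paper does not prove it, and only records that it is known for $r\le 8$ and that, for odd $r=2k+1$, it is equivalent to Conjecture~\ref{PlanarToPC}. You correctly recognize this, so your write-up should be judged as a reduction rather than a proof, and as such it is accurate. Your duality dictionary is right: a bridge of $G$ would be an odd edge-cut of size $1$, the parity count $|C|\equiv t\pmod 2$ correctly identifies odd cuts of $G$ with odd cycles of $G^*$ (strictly, bonds of $G$ with cycles of $G^*$, but a small odd cut always contains a small odd bond, so the $(2k+1)$-graph condition is indeed equivalent to $G^*$ having odd-girth at least $2k+1$), and your argument that the pulled-back colouring is proper --- the $2k+1$ colours around a vertex sum to zero in $\mathbb{Z}_2^{2k}$, forcing $a_i\equiv b\pmod 2$ and hence $a_i=b=1$ for all $i$ --- is correct. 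This is exactly the easy direction of the equivalence from~\cite{N07} (the converse, which makes it a genuine equivalence, needs the Folding Lemma and you do not need it here), and it is the same mechanism the paper actually uses in Theorem~\ref{thm:dual} to derive the $K_4$-minor-free case of Conjecture~\ref{EdgeColouringRegularPlanars} from Theorem~\ref{BoundedByPC(2k)}; there the paper phrases the properness check as ``each $(2k+1)$-cycle of $PC(2k)$ receives all $2k+1$ colours'' rather than via your explicit multiplicity count, but the content is identical. The one place where your sketch is genuinely thin is the even case: for even $r$ the dual is bipartite, the relevant condition is on even cuts rather than odd-girth, and the target you propose ($\mathbb{Z}_2^{r-1}$ with generators $\{e_1,\dots,e_{r-1},J\}$, i.e.\ the bipartite projective cube $PC(r-1)$) does not come with any analogous homomorphism conjecture in this paper, so ``treat it separately'' is not a reduction at all. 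Since the odd case is precisely the open Conjecture~\ref{PlanarToPC}, your conclusion --- that the honest thing to do is restrict to $K_4$-minor-free graphs, where the homomorphism bounds can actually be established --- is exactly what the paper does.
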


For any odd integer $r=2k+1$ ($k\geq 1$), the claim of Conjecture~\ref{EdgeColouringRegularPlanars} is proved to be equivalent to the claim of Conjecture~\ref{PlanarToPC} for $k$ by Naserasr~\cite{N07}. Conjecture~\ref{EdgeColouringRegularPlanars} has been proved for $r\leq 8$ in~\cite{G12} ($r=4,5$), \cite{DKK10} ($r=6$), \cite{CEKS15,E11} ($r=7$) and \cite{CEKS12} ($r=8$). We note that these proofs use induction on $r$ and thus use the Four-Colour Theorem as a base step.

The claim of Conjecture~\ref{EdgeColouringPlanars} when restricted to the class of $K_4$-minor-free graphs (a subclass of planar graphs) was proved by Seymour~\cite{S90}. A simpler (unpublished) proof of this result is given more recently by Fernandes and Thomas~\cite{FT10}.

Using Seymour's result and a characterization theorem, Marcotte extended the result to the class of graphs with no $K_{3,3}$ or $(K_5-e)$-minor~\cite{M01}, where $K_5-e$ is obtained from $K_5$ by removing an edge. 

A key tool in proving the equivalence between Conjecture~\ref{PlanarToPC} and Conjecture~\ref{EdgeColouringPlanars} (for a fixed $k$) in~\cite{N07} is the Folding Lemma of Klostermeyer and Zhang~\cite{KZ00}. In the absence of such a lemma for the subclass of $K_4$-minor-free graphs, there is no known direct equivalence between the restrictions of Conjecture~\ref{EdgeColouringRegularPlanars} and Conjecture~\ref{PlanarToPC} to $K_4$-minor-free graphs. However, since $K_4$-minor-free graphs are planar, the notion of dual is well-defined. Using such a notion, Conjecture~\ref{EdgeColouringRegularPlanars} restricted to the class of $K_4$-minor-free graphs is implied by Conjecture~\ref{PlanarToPC} restricted to the same class. Thus, in this paper, by proving Conjecture~\ref{PlanarToPC} for the class of $K_4$-minor-free graphs, we obtain as a corollary a new proof of Conjecture~\ref{EdgeColouringRegularPlanars} for $K_4$-minor-free $(2k+1)$-graphs. Then, by improving on the homomorphism bound, we also deduce stronger results on the edge-colouring counterpart.

\paragraph{Our results} In this paper, we study the case $\mathcal I=\{C_{2k-1}\}$ and $\mathcal J=\{K_4\}$ of Problem~\ref{optimal bound}, that is, the case of $K_4$-minor-free graphs (also known as series-parallel graphs) of odd-girth at least~$2k+1$, that we denote by $\SP{2k+1}$. Our main tool is to prove necessary and sufficient conditions for a graph $B$ of odd-girth $2k+1$ to be a bound for $\SP{2k+1}$. These conditions are given in terms of the existence of a certain weighted graph (that we call a $k$-partial distance graph of $B$) containing $B$ as a subgraph, and that satisfies certain properties. The main idea of the proof is based on homomorphisms of weighted graphs and the characterization of $K_4$-minor-free graphs as partial $2$-trees. This result is presented in Section~\ref{sec:maintheorem}. From this, we are able to deduce a polynomial-time algorithm to decide whether a graph of odd-girth~$2k+1$ is a bound for $\SP{2k+1}$. This algorithm is presented in Section~\ref{sec:algo}. We will then use our main theorem, in Section~\ref{sec:general}, to prove that the projective hypercube $PC(2k)$ bounds $\SP{2k+1}$, showing that Conjecture~\ref{PlanarToPC} holds when restricted to $K_4$-minor-free graphs. In fact, we also show that this is far from being optimal (with respect to the order), by exhibiting two families of subgraphs of the projective hypercubes that are an answer: the Kneser graphs $K(2k+1,k)$, and a family of order $4k^2$ (which we call augmented square toroidal grids). Note that the order $O(k^2)$ is optimal, as shown by He, Sun and Naserasr~\cite{HNS15}, while for planar graphs it is known that any answer must have order at least $2^{2k}$ (see Sen, Sun and Naserasr~\cite{NSS15}). In Section~\ref{sec:smallvalues}, for $k\leq 3$, we determine optimal answers to the problem. For $k=1$, it is well-known that $K_3$ is a bound; $K_3$ being a $K_4$-minor-free graph, it is the optimal bound in many senses (in terms of order, size, and homomorphism order). We prove that the smallest triangle-free graph bounding $\SP{5}$ has order~$8$, and the smallest graph of odd-girth~$7$ bounding $\SP{7}$ has order~$15$ (and we determine concrete bounds of these orders). These graphs are not $K_4$-minor-free, and, therefore, these two bounds are not optimal in the sense of the homomorphism order. All our bounds are subgraphs of the corresponding projective hypercubes. These optimal bounds for $k\leq 3$ yield a strengthening of other results about both the fractional and the circular chromatic numbers of graphs in $\SP{5}$ and $\SP{7}$. In Section~\ref{sec:applis}, we discuss applications of our work to edge-colourings of $K_4$-minor-free multigraphs. We finally conclude with some remarks and open questions in Section~\ref{sec:remarks}.

\medskip
%

\section{Preliminaries}\label{sec:prelim}

In this section, we gather some definitions and useful results from the literature.

\subsection{General definitions and observations}

Given three positive real numbers $p,q,r$ we say the triple
$\{p,q,r\}$ satisfies the \emph{triangular inequalities} if we have
$p\leq r+q$, $q\leq p+r$ and $r\leq p+q$. Assuming $p$ is the largest
of the three, it is enough to check that $p\leq q+r$.

In a graph $G$, we denote by $N_G^d(v)$ the \emph{distance $d$-neighbourhood of 
$v$}, that is, the set of vertices at distance exactly~$d$ from vertex $v$; 
$N_G^1(v)$ is simply denoted by $N_G(v)$. In $G$, the distance between two 
vertices $u$ and $v$ is denoted $d_G(u,v)$. In these notations, if there is no ambiguity about the graph $G$, the subscript may be omitted.

An \emph{independent set} in a graph is a set of vertices no two of which are adjacent.

A \emph{walk} in $G$ is a sequence $v_0,\ldots, v_k$ of vertices where
two consecutive vertices are adjacent in $G$. A walk with $k$ edges is
a \emph{$k$-walk}. If the first and last vertices are the same, the
walk is a \emph{closed walk} (\emph{closed $k$-walk}). If no vertex of
a walk is repeated, then it is a \emph{path} ($k$-path). A path whose
internal vertices all have degree~$2$ is a \emph{thread}. If in a
closed walk, no inner-vertex is repeated, then it is a \emph{cycle}
($k$-cycle).

Given a set $X$ of vertices of a graph $G$, we denote by $G[X]$ the subgraph of 
$G$ induced by $X$.

Given two graphs $G$ and $H$, the cartesian product of $G$ and $H$, denoted 
$G \Box H$, is the graph on vertex set $V(G)\times V(H)$ where $(x,y)$ is 
adjacent 
to $(z,t)$ if either 
$x=z$ and $y$ is adjacent to $t$ in $H$, or $y=t$ and $x$ is adjacent to $z$ in 
$G$.

An \emph{edge-weighted graph} $(G,w)$ is a graph $G$ together with an 
edge-weight function $w:E(G)\to~\mathbb{N}$. Given two edge-weighted graphs 
$(G,w_1)$ and $(H,w_2)$, a homomorphism of $(G,w_1)$ to $(H,w_2)$ is a 
homomorphism of $G$ to $H$ which also preserves the edge-weights.
Given a connected graph $G$ of order $n$, the \emph{complete distance graph} $(K_n,d_G)$ 
of $G$ is the weighted complete graph on vertex set $V(G)$, and where for each 
edge $u,v$ of $K_n$, its weight $\omega(uv)$ is the distance $d_G(u,v)$ 
between $u$ and $v$ in $G$. Given any subgraph $H$ of $K_n$, the \emph{partial 
distance graph} $(H,d_G)$ of $G$ is the spanning subgraph of $(K_n,d_G)$ whose 
edges are the edges of $H$. Furthermore, if for every edge $xy$ of $H$ we have $d_G(x,y)\leq k$, we say that $(H,d_G)$ is a \emph{$k$-partial distance graph of $G$}.

Easy but important observations, which we will use frequently, 
are the following.

\begin{observation}\label{obs:DistanceDetmByCycle}
 Let $G$ be a graph of odd-girth $2k+1$ and $C$ a cycle of length
 $2k+1$ in $G$. Then, for any pair $(u,v)$ of vertices of $C$, the distance 
 in $G$ between $u$ and $v$ is determined by their distance in $C$.
\end{observation}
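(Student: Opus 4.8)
The plan is to show that in a graph $G$ of odd-girth $2k+1$, any cycle $C$ of length $2k+1$ is geodesic, meaning that for any two vertices $u,v$ on $C$, their distance in $G$ equals their distance in $C$. Write $C$ as $v_0 v_1 \cdots v_{2k} v_0$, and let $u,v$ be two of its vertices. Since $C$ has odd length, the two arcs of $C$ joining $u$ to $v$ have lengths $a$ and $b$ with $a+b = 2k+1$, so one of them, say $a$, satisfies $a \leq k$, while $b = 2k+1-a \geq k+1$. The distance $d_C(u,v) = a$ since $a$ is the shorter arc. I must show $d_G(u,v) = a$ as well; clearly $d_G(u,v) \le a$, so the content is the lower bound.

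First I would suppose for contradiction that $d_G(u,v) = c < a$, and let $P$ be a shortest $u$--$v$ path in $G$, of length $c$. Consider the two closed walks obtained by concatenating $P$ with each of the two arcs of $C$: one has length $c + a$ and the other has length $c + b = c + 2k+1 - a$. Since $a + b = 2k+1$ is odd, exactly one of $c+a$, $c+b$ is odd. That odd closed walk therefore contains an odd closed walk, hence an odd cycle, of length at most $\max\{c+a, c+b\}$ — but to get a useful contradiction I need the odd one to be short. If $c + a$ is odd, then since $c < a \le k$ we get $c + a \le 2a - 1 \le 2k-1 < 2k+1$, contradicting that the odd-girth is $2k+1$ (a closed odd walk of length $\ell$ forces an odd cycle of length $\le \ell$). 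If instead $c+b$ is odd, then $c+a$ is even; here I use $c+a < 2a \le 2k$, so the even closed walk $P \cup (\text{arc of length } a)$ has length $< 2k$. An even closed walk need not contain an odd cycle, so a little more care is needed: but if this even closed walk is not itself a single traversal issue, one shows it contains two vertices joined by two walks of different parities, and splicing appropriately yields a shorter odd closed walk, again of length $< 2k+1$, the contradiction. Concretely, the even closed walk $W = P \cup C[u,v]$ of length $c+a < 2k$ is not a cycle (it has a repeated vertex since $c \ne a$), so it decomposes into strictly shorter closed walks whose lengths sum to $c+a$; one of them is odd (as $c+a$ could be... wait, $c+a$ is even here), so this decomposition gives closed walks of total even length, possibly all even.

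Let me instead run the cleaner argument. The decisive case split is on the parity of $c$. If $c$ and $a$ have different parities, then $c+a$ is odd and the closed walk $P \cup C[u,v]_{(a)}$ has odd length $c + a \le (a-1) + a = 2a - 1 \le 2k - 1$, producing an odd cycle shorter than $2k+1$ — contradiction. If $c$ and $a$ have the same parity, then $c$ and $b$ have different parities (as $a,b$ differ in parity), so $c + b$ is odd, and $P \cup C[u,v]_{(b)}$ is a closed walk of odd length $c + b = c + 2k + 1 - a = 2k+1 - (a - c)$; since $a - c \ge 1$ this length is at most $2k$, again giving an odd cycle shorter than $2k+1$ — contradiction. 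In both cases we reach a contradiction, so no such $c < a$ exists and $d_G(u,v) = a = d_C(u,v)$.

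The only genuinely delicate point, which I would state as a small preliminary fact, is that a closed walk of odd length $\ell$ in any graph contains an odd cycle of length at most $\ell$ — this is standard (decompose the walk into cycles; the lengths sum to $\ell$, which is odd, so at least one summand is odd). With that in hand, the proof is just the parity bookkeeping above, and the fact that one of the two arcs of an odd cycle joining $u$ and $v$ always has length at most $k$ while the complementary arc is strictly longer, which is what lets every candidate shorter path be closed up into a \emph{short} odd walk. I do not expect any real obstacle; the main thing to get right is ensuring the odd closed walk produced has length strictly less than $2k+1$ in every subcase, which the inequalities $c \le a - 1$ and $a \le k$ deliver.
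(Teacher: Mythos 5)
Your final argument is correct and is precisely the standard parity argument that the paper leaves implicit (the observation is stated without proof as an ``easy'' fact): for $c<a\le k$, exactly one of the closed walks $P\cup C[u,v]_{(a)}$ (length $c+a\le 2k-1$) and $P\cup C[u,v]_{(b)}$ (length $2k+1-(a-c)\le 2k$) is odd, and an odd closed walk of length $\ell$ contains an odd cycle of length at most $\ell$, contradicting the odd-girth. The exploratory false start in your middle paragraph is fully superseded by the clean case split on the parity of $c$ versus $a$, so the write-up should simply omit it.
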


The following fact follows from the previous observation.

\begin{observation}\label{obs:HomDistanceDetmByCycle}
 Suppose $G$ and $H$ are two graphs of odd-girth $2k+1$ and that $\phi$ is a 
homomorphism of $G$ to $H$.
 If $u$ and $v$ are two vertices of $G$ on a common $(2k+1)$-cycle of $G$, 
 then $d_H(\phi(u), \phi(v))=d_G(u,v)$.
\end{observation}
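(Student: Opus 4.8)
The plan is to deduce this directly from Observation~\ref{obs:DistanceDetmByCycle}, which we are allowed to assume. First I would fix the $(2k+1)$-cycle $C$ of $G$ passing through $u$ and $v$, and consider its image $\phi(C)$ in $H$. Since $\phi$ is a homomorphism, the image of $C$ is a closed walk of length $2k+1$ in $H$; being of odd length, it must contain an odd closed subwalk, hence an odd cycle, and since $H$ has odd-girth $2k+1$ this forces the closed walk $\phi(C)$ to be itself a cycle of length exactly $2k+1$ in $H$ (no vertex can repeat, otherwise we would split off an odd cycle of length strictly less than $2k+1$). So $\phi$ maps $C$ isomorphically onto a $(2k+1)$-cycle $C'$ of $H$, and in particular $d_{C'}(\phi(u),\phi(v)) = d_C(u,v)$.

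Next I would apply Observation~\ref{obs:DistanceDetmByCycle} twice. Applied in $G$ to the cycle $C$, it tells us $d_G(u,v)$ is the function of $d_C(u,v)$ determined by that observation (explicitly, $\min\{d_C(u,v), (2k+1)-d_C(u,v)\}$, but we only need that it depends on $d_C(u,v)$ alone). Applied in $H$ to the cycle $C'$, it tells us $d_H(\phi(u),\phi(v))$ is the \emph{same} function of $d_{C'}(\phi(u),\phi(v))$. Since $d_C(u,v) = d_{C'}(\phi(u),\phi(v))$ by the previous paragraph, we conclude $d_G(u,v) = d_H(\phi(u),\phi(v))$, as desired.

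The only real content — and thus the step I expect to be the main obstacle, though it is not a serious one — is the claim that the image of an odd cycle of length equal to the odd-girth must again be a cycle rather than a shorter closed walk. The key point is that a closed walk of odd length $\ell$ always contains a closed walk of odd length strictly less than $\ell$ unless it is already a cycle: if a vertex repeats, one of the two closed subwalks obtained by splitting at the repetition has odd length and is strictly shorter. Iterating, one reaches an odd cycle of length $\le 2k+1$; if $\phi(C)$ were not a cycle this length would be $< 2k+1$, contradicting the odd-girth of $H$. Hence $\phi$ restricted to $C$ is injective and its image is a $(2k+1)$-cycle, which is exactly what the argument above needs.
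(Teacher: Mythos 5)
Your proposal is correct, and it follows the route the paper intends: the paper offers no explicit proof of this observation, stating only that it follows from Observation~\ref{obs:DistanceDetmByCycle}, and your argument is exactly that derivation. The one step requiring care --- that the homomorphic image of a $(2k+1)$-cycle in a graph of odd-girth $2k+1$ is again a $(2k+1)$-cycle, because a repeated vertex would split the odd closed walk into two closed subwalks of lengths summing to $2k+1$, one of which is odd, shorter, and hence yields a forbidden short odd cycle --- is handled correctly, so the two applications of Observation~\ref{obs:DistanceDetmByCycle} (to $C$ in $G$ and to $\phi(C)$ in $H$) go through as claimed.
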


\subsection{$K_4$-minor-free graphs}

The class of $K_4$-minor-free graphs has a classic characterization as the set of \emph{partial $2$-trees}. A \emph{$2$-tree} is a graph that can be built from a $2$-vertex complete graph $K_2$ in a sequence $G_0=K_2, G_1,\ldots, G_t$ where $G_i$ is obtained from $G_{i-1}$ by adding a new vertex and making it adjacent to two adjacent vertices of $G_{i-1}$ (thus forming a new triangle). A partial $2$-tree is a graph that is a subgraph of a $2$-tree. Since $2$-trees are $2$-degenerate (that is, each subgraph has a vertex of degree at most~$2$), any $2$-tree of order $n$ has exactly $2n-3$ edges.

The following fact is well-known and will be useful (for a reference, see for example Diestel's book~\cite{D10}).

\begin{theorem}[{\cite[Proposition~8.3.1]{D10}}]\label{thm:2-trees}
A graph is $K_4$-minor-free if and only if it is a partial $2$-tree.
\end{theorem}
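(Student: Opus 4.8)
The plan is to prove the two implications separately, in both cases by induction, using two classical facts: (i) since $K_4$ has maximum degree $3$, a graph contains $K_4$ as a minor if and only if it contains a subdivision of $K_4$; and (ii) every graph of minimum degree at least $3$ contains a subdivision of $K_4$, so in particular every $K_4$-minor-free graph has a vertex of degree at most $2$ (indeed is $2$-degenerate). Both facts are standard and may be quoted from, e.g., Diestel's book.

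For the ``if'' direction it suffices, since $K_4$-minor-freeness is closed under taking subgraphs, to show that every $2$-tree is $K_4$-minor-free. I would induct along a construction sequence $K_2 = G_0, G_1, \dots, G_t$; the base case is clear, so suppose $G_i = G_{i-1} + v$ with $v$ adjacent precisely to two adjacent vertices $a,b$ of $G_{i-1}$, and assume for contradiction that $G_i$ contains a subdivision $S$ of $K_4$ (fact (i)). In $S$ every vertex has degree $2$ or $3$, while $\deg_{G_i}(v) = 2$; hence either $v \notin V(S)$, in which case $S \subseteq G_i - v = G_{i-1}$, or $v$ is a degree-$2$ subdivision vertex of $S$, in which case its two $S$-neighbours must be $a$ and $b$, so deleting $v$ and adding the edge $ab \in E(G_{i-1})$ turns $S$ into a subdivision of $K_4$ inside $G_{i-1}$. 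Either way $G_{i-1}$ has a $K_4$-minor, contradicting the induction hypothesis.

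For the ``only if'' direction I would show by induction on $|V(G)|$ that every $K_4$-minor-free graph $G$ is a partial $2$-tree. If $|V(G)| \le 3$ then $G$ is a subgraph of $K_3$, which is a $2$-tree. Suppose $|V(G)| = n \ge 4$; by fact (ii) there is a vertex $v$ with $\deg_G(v) \le 2$. If $\deg_G(v) \le 1$, apply induction to the $K_4$-minor-free graph $G - v$ (on $n-1 \ge 3$ vertices) to get a $2$-tree $T' \supseteq G-v$, and extend $T'$ to a $2$-tree $T$ by adding $v$ adjacent to the two endpoints of an edge of $T'$ incident to the neighbour of $v$ in $G$ (or to an arbitrary edge of $T'$ if $v$ is isolated); then $G \subseteq T$ since $G - v \subseteq T'$ and the only possible edge at $v$ in $G$ is present in $T$. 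If $\deg_G(v) = 2$ with neighbours $a,b$, let $H := G - v$ when $ab \in E(G)$ and $H := (G-v) + ab$ otherwise; in the second case $G$ arises from $H$ by subdividing the edge $ab$, so $H$ is a minor of $G$ and therefore $K_4$-minor-free. In either case $H$ is $K_4$-minor-free, has $n - 1 \ge 3$ vertices, and contains the edge $ab$; by induction there is a $2$-tree $T' \supseteq H$, and $ab \in E(T')$. Adding $v$ to $T'$ adjacent to $a$ and $b$ gives a $2$-tree $T$, and $E(G) \subseteq (E(T')\setminus\{ab\}) \cup \{va, vb\} \subseteq E(T)$, so $G \subseteq T$.

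The main obstacle is really just fact (ii) (equivalently, $2$-degeneracy of $K_4$-minor-free graphs), which carries the whole argument; once it is granted, the rest is bookkeeping. The one point requiring a small observation — already used above — is that subdividing an edge cannot create a $K_4$-minor, which is what allows the reduction from $G$ to the smaller graph $H$ in the degree-$2$ case when $a$ and $b$ are non-adjacent. Care must also be taken, when reattaching $v$ during the reconstruction, to attach it along an edge that genuinely belongs to the $2$-tree produced by induction, which is why $H$ is chosen to contain $ab$.
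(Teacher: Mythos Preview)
Your proof is correct. Note, however, that the paper does not actually supply its own proof of this statement: it is quoted as a known result (Proposition~8.3.1 in Diestel's book) and used as a black box, so there is no in-paper argument to compare against. Your write-up follows the standard route one finds in textbooks (2-degeneracy via fact~(ii), then induction in both directions), and the bookkeeping is handled carefully --- in particular the reduction to $H=(G-v)+ab$ in the degree-$2$ case with $ab\notin E(G)$, and the observation that subdividing an edge cannot create a $K_4$-minor, are exactly what is needed.
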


Note that the set of edge-maximal $K_4$-minor-free graphs coincides with the set of $2$-trees.

Another alternative definition of $K_4$-minor-free (multi)graphs is via the classic notion of 
\emph{series-parallel graphs}, indeed a graph is $K_4$-minor-free if and only if each biconnected 
component is a series-parallel graph~\cite{D10}. A (multi)graph $G$ is series-parallel if it 
contains two vertices $s$ and $t$ such that $G$ can be built using the following inductive 
definition: (i) an edge whose endpoints are labelled $s$ and $t$ is series-parallel; (ii) the graph obtained from two series-parallel graphs by identifying their $s$-vertices and their $t$-vertices, and labeling the new vertices $s$ and $t$ correspondingly is series-parallel (\emph{parallel operation}); 
(iii) the graph obtained from two series-parallel graphs by identifying vertex $s$ from one of them with vertex $t$ from the other and removing their labels is series-parallel (\emph{series operation}). Thus, the abbreviation $\SPG$ is commonly used to denote the class of $K_4$-minor-free graphs. We will also use this notation, as well as $\SP{2k+1}$ to denote the class of $K_4$-minor-free graphs of odd-girth at least $2k+1$.

We note that many homomorphism problems are studied on the class of $K_4$-minor-free graphs in the literature. A notable article is~\cite{NN07}, in which Ne\v{s}et\v{r}il and Nigussie prove that the homomorphism order restricted to the class $\SPG$ is \emph{universal}, that is, it contains an isomorphic copy of each countable order as an induced sub-order. In other words, the concept of homomorphisms inside the class of $K_4$-minor-free graphs is far from being trivial, although ordinary vertex-colouring on this class is a simple problem. As we will see next, circular colouring is also a nontrivial problem on $K_4$-minor-free graphs.

\subsection{Circular chromatic number}

Given two integers $p$ and $q$ with $gcd(p,q)=1$, the \emph{circular clique} $C_{p,q}$ is the graph on vertex set $\{0,\ldots, p-1\}$ with $i$ adjacent to $j$ if and only if $q \leq i-j \leq p-q$. A homomorphism of a graph $G$ to $C_{p,q}$ is called a $(p,q)$-colouring, and the \emph{circular chromatic number} of $G$, denoted $\chi_c(G)$, is the smallest rational $p/q$ such that $G$ has a $(p,q)$-colouring. Since $C_{p,1}$ is the complete graph $K_{p}$, we have $\chi_c(G)\leq \chi(G)$.

The (supremum of) circular chromatic number of $K_4$-minor-free graphs of given odd-girth was completely determined in the series of papers~\cite{HZ00,PZ02a,PZ02b}. For the case of triangle-free $K_4$-minor-free graphs, it is proved in~\cite{HZ00} that $\SP{5}$ is bounded by the circular clique $C_{8,3}$, also known as the Wagner graph. Furthermore, each graph in $\SP{7}$ has circular chromatic number at most $5/2$~\cite{PZ02a} (equivalently $\SP{7}$ is bounded by the $5$-cycle). The latter result is shown to be optimal in the following theorem, that we will use in one of our proofs.

\begin{theorem}[Pan and Zhu \cite{PZ02b}]\label{thm:circ_chr_nr}
For any $\epsilon>0$, there is a graph of $\SP{7}$ with circular chromatic number at least $5/2-\epsilon$.
\end{theorem}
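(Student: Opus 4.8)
The plan is to exhibit a sequence $(G_n)_{n\ge 1}$ of $K_4$-minor-free graphs of odd-girth at least $7$ with $\chi_c(G_n)\to 5/2$. Since the circular chromatic number of a finite graph is a minimum over rationals, it is enough to produce, for each $n$, one circular clique $C_{p_n,q_n}$ with $p_n/q_n=5/2-\delta_n$ where $\delta_n\to 0$, together with a graph $G_n\in\SP{7}$ admitting no homomorphism to $C_{p_n,q_n}$. Indeed, all circular cliques are linearly ordered by homomorphism within the range $[2,3)$ of ratios (so $C_{p,q}\to C_{p',q'}$ whenever $2\le p/q\le p'/q'<3$), and any $C_{p,q}$ with $p/q\le 2$ is edgeless or a matching; so a single non-homomorphism $G_n\nrightarrow C_{p_n,q_n}$ already forces $G_n\nrightarrow C_{p,q}$ for every $p/q\le p_n/q_n$, hence $\chi_c(G_n)\ge p_n/q_n$. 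Combined with the companion result of Pan and Zhu~\cite{PZ02a} recalled above (every graph of $\SP{7}$ maps to $C_5$, so $\chi_c\le 5/2$ throughout the class), this pins $\chi_c(G_n)$ in $[5/2-\delta_n,\,5/2]$, and letting $n\to\infty$ yields the theorem.

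To build $G_n$ I would use the difference-set calculus for two-terminal series-parallel graphs. For such a graph $(H;s,t)$ and a target $C_{p,q}$, let $\Delta_{p,q}(H)\subseteq\mathbb{Z}_p$ be the set of differences $\varphi(t)-\varphi(s)$ over all homomorphisms $\varphi\colon H\to C_{p,q}$. A single $st$-edge gives the interval $\{q,q+1,\dots,p-q\}$; a series composition replaces $\Delta$ by the Minkowski sum of the two sets in $\mathbb{Z}_p$; a parallel composition replaces it by the intersection; and swapping $s$ and $t$ negates $\Delta$. In particular a thread of length $\ell$ has $\Delta_{p,q}$ equal to the $\ell$-fold sumset of $\{q,\dots,p-q\}$, a circular arc of $\ell(p-2q)+1$ points centred at $0$ or near $p/2$ according to the parity of $\ell$ (and all of $\mathbb{Z}_p$ once $\ell(p-2q)\ge p-1$, which at ratio $5/2$ happens for $\ell\ge 5$ — this is exactly why $5/2$, and not a smaller value, is the critical ratio). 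Finally $H$ maps to $C_{p,q}$ if and only if $\Delta_{p,q}(H)\neq\varnothing$, so the goal becomes: realise $\Delta_{p_n,q_n}(G_n)=\varnothing$ by a series-parallel graph of odd-girth at least $7$.

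A one-level attempt shows why a recursion is needed. For a theta graph (three parallel threads of lengths $a\le b$ and $c$, with parities chosen so that the odd cycles have lengths $a+c$ and $b+c$), $\Delta$ is an intersection of three arcs and becomes empty precisely when $\min(a,b)+c<r/(r-2)$, which at $r=5/2$ means $\min(a,b)+c\le 4$; but every admissible choice then puts a length-$1$ thread in parallel with a length-$2$ thread (a triangle) or produces a $5$-cycle, so odd-girth $\ge 7$ kills every theta solution. The remedy is to construct recursively two-terminal gadgets $\Gamma_0,\Gamma_1,\dots$ of odd-girth $\ge 7$ that play the roles of the short threads but whose internal $s$-$t$ distances are large: $\Gamma_0$ is a fixed base gadget (a $C_7$, or a theta of odd-girth $7$, with well-chosen terminals) whose $\Delta_{p_n,q_n}$ is a union of short arcs placed away from $0$ and $p/2$, and $\Gamma_{i+1}$ is assembled from a bounded number of copies of $\Gamma_i$ by a prescribed pattern of series and parallel steps. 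Series steps rotate the arcs and parallel steps intersect them — two arcs a quarter-turn apart whose length-fractions sum to less than $1$ cut each other strictly down, and two arcs whose length-fractions sum to less than $1$ and whose centres are antipodal intersect in the empty set — so, tracking these, one proves by induction on $i$ that the arc-fraction of $\Delta_{p_n,q_n}(\Gamma_i)$ shrinks by a definite amount per level, and after finitely many levels (their number growing with $1/\delta_n$) a last parallel step forces $\Delta_{p_n,q_n}(G_n)=\varnothing$. Every new cycle created in a parallel step has length equal to the sum of the two combined $s$-$t$ path lengths, so choosing all gadget lengths and base thetas so that every such odd sum is at least $7$ keeps the odd-girth at least $7$; and since everything is built from edges by series and parallel operations, each $\Gamma_i$, hence $G_n$, is series-parallel, i.e. $K_4$-minor-free.

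The main obstacle is precisely this coordination. The arc arithmetic "wants" to produce $\Delta=\varnothing$ almost immediately — already a single edge in parallel with a $2$-thread would do it — and the whole difficulty is to carry out the same rotations and intersections while never letting a parallel composition create an odd cycle of length $3$ or $5$, and never leaving the series-parallel class. One must design the recursion so that at each level the gadget forced to "behave like an edge" (a short arc near $p/2$) nonetheless has all $s$-$t$ paths long and of the correct parity, and likewise for the gadget that must "behave like a $2$-thread"; and the inductive invariant on arc-fractions has to be maintained uniformly at the ratio $r_n$, with the number of levels calibrated so that the guaranteed shrink beats the gap $5/2-r_n=\delta_n$. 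Making this bookkeeping close — rather than merely reaching some fixed ratio strictly below $5/2$ — is where the real work lies.
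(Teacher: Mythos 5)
This statement is quoted by the paper from Pan and Zhu~\cite{PZ02b}; the paper itself contains no proof of it, so your proposal can only be compared with the general methodology of that reference, which it does follow in spirit (the two-terminal difference-set calculus for series-parallel graphs, with series = sumset, parallel = intersection, and $H\to C_{p,q}$ iff the difference set is nonempty, plus the monotone ordering of circular cliques to convert one non-homomorphism into the bound $\chi_c(G_n)\ge p_n/q_n$). That framing is sound.

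However, as a proof the proposal has a genuine gap: the recursive gadgets $\Gamma_0,\Gamma_1,\dots$ are never constructed, and the key inductive claim --- that the ``arc-fraction'' of $\Delta_{p_n,q_n}(\Gamma_i)$ shrinks by a definite amount per level while every odd cycle created by a parallel step has length at least $7$ --- is asserted rather than proved. This is not a routine verification that can be deferred: your own one-level analysis shows that every naive attempt (thetas) is blocked exactly by the girth constraint, so the existence of a successful multi-level scheme is precisely the content of the theorem. There is also a concrete tension your sketch glosses over. Series composition does not merely ``rotate'' arcs: composing with a gadget whose difference set has width $w$ translates \emph{and widens} by $w-1$, and any gadget of odd-girth at least $7$ built from edges starts with wide arcs (each thread contributes width at least $p-2q$); so to get the precise rotations needed for shrinking intersections you already need gadgets with narrow difference sets, which is essentially the object you are trying to build. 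Moreover $\Delta$ of the base gadgets is in general a union of several arcs, and sumsets of unions of arcs multiply the number of components, so the single-arc bookkeeping (``two arcs a quarter-turn apart\dots'') does not directly apply; the invariant would have to control both total measure and arc structure, uniformly in $\delta_n$, with the number of levels calibrated against $1/\delta_n$. Until explicit gadgets are exhibited and this invariant is verified (which is what Pan and Zhu actually do), the argument does not establish the existence of the graphs $G_n$, and the unverified quantitative side claims (e.g.\ the theta threshold $\min(a,b)+c< r/(r-2)$) cannot substitute for it.
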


Given a graph $G$, a rational $p/q$, a positive integer $k$ and a homomorphism $h$ 
of $G$ to $C_{p,q}$, a \emph{$pk$-tight cycle} in $G$ with respect to $h$ is a 
cycle $C$ of length $pk$ such that any two consecutive vertices $u_i$ and 
$u_{i+1}$ of $C$ satisfy $h(u_{i+1})-h(u_i)=q \bmod p$. The following 
proposition is important when studying the circular chromatic number of a graph, 
and will be useful to us.

\begin{proposition}[Guichard \cite{G93}]\label{prop:tight-cycle}
Let $G$ be a graph with $\chi_c(G)=p/q$. Then, in any homomorphism $h$ of $G$ to $C_{p,q}$, there is positive integer $k$ such that $G$ contains a tight $pk$-cycle with respect to $h$.
\end{proposition}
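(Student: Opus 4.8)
The plan is to prove the contrapositive in the following strong form: if a homomorphism $h\colon G\to C_{p,q}$ (with $\gcd(p,q)=1$) admits no tight cycle, then $\chi_c(G)<p/q$, contradicting $\chi_c(G)=p/q$. So fix such an $h$. I would first dispose of a few degenerate cases. If $p/q\le 2$ then $G$ is bipartite, under $h$ every edge is tight, and any cycle of $G$ is therefore a tight cycle; so the conclusion holds provided $G$ has a cycle, the only exception being the degenerate situation where $G$ is a forest, which we set aside. If moreover $p=2q+1$, then the only residues $t\in\{q,\dots,p-q\}$ available for an edge are $q$ and $p-q=q+1$, each making that edge tight in one of its two directions; thus again every edge is tight, and since $\chi_c(G)>2$ the graph $G$ is not a forest, so any cycle of $G$ is a tight cycle. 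Hence from now on $p\ge 2q+2$.

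For the main case, I would form the \emph{tightness digraph} $D$ on $V(G)$, putting an arc $u\to v$ for each edge $uv$ of $G$ with $h(v)-h(u)\equiv q\pmod p$. Since $p>2$, no edge yields both orientations, so $D$ orients a subgraph of $G$; and a directed cycle of $D$ of length $\ell$ is a cycle of $G$ all of whose edges are tight, with $q\ell\equiv 0\pmod p$, hence (as $\gcd(p,q)=1$) with $\ell$ a multiple of $p$ — that is, a tight $pk$-cycle. By hypothesis $D$ has none, so $D$ is acyclic, and I can set $\ell(v)$ to be the length of a longest directed path of $D$ ending at $v$: this is well defined, at most $L:=|V(G)|-1$, and satisfies $\ell(v)\ge\ell(u)+1$ whenever $u\to v$ is an arc. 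Now fix any integer $N\ge L+1$ and define $h'\colon V(G)\to\mathbb{Z}_{Np-1}$ by $h'(v)=Nh(v)+\ell(v)$. I would claim that $h'$ is a homomorphism of $G$ to $C_{Np-1,Nq}$, which immediately gives $\chi_c(G)\le (Np-1)/(Nq)<p/q$ and finishes the proof.

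Verifying that $h'$ respects edges is where the only real work lies, and I expect it to be the main obstacle because of the ``winding'' of $h$ around $C_{p,q}$. For an edge $uv$, orient it so that $t:=(h(v)-h(u))\bmod p$ satisfies $q\le t\le p/2$; then $h(v)-h(u)$, \emph{as an integer}, is either $t$ (the ``non-wrapping'' case) or $t-p$ (the ``wrapping'' case), and $h'(v)-h'(u)=N(h(v)-h(u))+(\ell(v)-\ell(u))$. In the wrapping case $N(t-p)=Nt-(Np-1)-1\equiv Nt-1\pmod{Np-1}$, so the computed difference picks up an extra $-1$; the point of the construction is that this $-1$ is absorbed on a tight edge by the strict gain $\ell(v)-\ell(u)\ge 1$, while on a non-tight edge the extra slack $t\ge q+1$ already covers it. A short check in each of the four cases (tight/non-tight $\times$ wrapping/non-wrapping), using only $N\ge L+1$ and $p\ge 2q+2$, shows that $h'(v)-h'(u)\bmod(Np-1)$ always lies in $[Nq,\ Np-1-Nq]$, i.e.\ $uv$ is an edge of $C_{Np-1,Nq}$. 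Everything outside this bookkeeping — the reduction of degenerate cases, the acyclicity of $D$, and the existence of the potential $\ell$ — is routine.
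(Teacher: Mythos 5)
The paper does not prove this proposition (it is quoted from Guichard's paper), so your argument stands on its own. Your main construction is sound and is essentially the standard argument: directed cycles of the tightness digraph $D$ are exactly the tight cycles (the increments sum to $q\ell\equiv 0\pmod p$, and $\gcd(p,q)=1$ forces $p\mid\ell$), acyclicity of $D$ yields the longest-path potential $\ell(v)$, and the recolouring $h'(v)=Nh(v)+\ell(v)$ with $N\ge |V(G)|$ does verify as a homomorphism to $C_{Np-1,Nq}$, giving $\chi_c(G)\le (Np-1)/(Nq)<p/q$, the desired contradiction. Setting aside forests is legitimate: the statement is genuinely vacuous there, and in this paper it is only applied to graphs with $\chi_c=5/2$.

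There is, however, a genuine error in your disposal of the case $p=2q+1$. From ``every edge is tight in one of its two orientations'' you conclude that ``any cycle of $G$ is a tight cycle''. That is false: a tight cycle must be \emph{consistently} oriented (each consecutive difference equal to $+q$ in the same traversal direction), and consequently its length must be a multiple of $p$; a cycle whose edges are merely each tight in some direction need satisfy neither condition. Concretely, take two $5$-cycles sharing an edge, mapped to $C_{5,2}$ in the natural way (this graph has $\chi_c=5/2$, i.e.\ $p=2q+1$): every edge is tight in one direction, yet the $8$-cycle formed by the symmetric difference of the two $5$-cycles is not a tight cycle, since $8$ is not a multiple of $5$. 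Note that $p/q=5/2$ is precisely the regime in which this paper invokes the proposition, so this case cannot be brushed aside. Fortunately the repair lies entirely inside your own argument: the hypothesis $p\ge 2q+2$ is never needed. With your orientation convention $q\le t\le p/2$, the case $p=2q+1$ simply has no non-tight edges (as $\lfloor p/2\rfloor=q$), and the two tight subcases of your verification only require $\ell(v)-\ell(u)\ge 1$, $N\ge |V(G)|$ and $p\ge 2q+1$ (the binding inequality is $\ell(v)-\ell(u)\le N(p-2q)-1$, which holds since $\ell(v)-\ell(u)\le N-1$). So you should run the main construction for every $p>2$ with $p\ge 2q+1$ and delete the faulty special case; with that change the proof is correct.
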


\section{Necessary and sufficient conditions for bounding $\SP{2k+1}$}\label{sec:maintheorem}

In this section, we develop necessary and sufficient conditions under
which a graph $B$ of odd-girth $2k+1$ is a bound for $\SP{2k+1}$. We
first introduce some notions that are important to express these
conditions.

\subsection{Preliminaries}

The aforementioned conditions are derived from a specific family of $K_4$-minor-free graphs defined below.

\begin{definition}\label{def:Tpqr}
 Given any positive integer $k$ and integers $p,q,r$ between 1 and
 $k$, $T_{2k+1}(p,q,r)$ is the graph built as follows. Let $u,v,w$ be
 three vertices. Join $u$ to $v$ by two disjoint paths of length $p$
 and $2k+1-p$, $u$ to $w$ by two disjoint paths of length $q$ and
 $2k+1-q$ and $v$ to $w$ by two disjoint paths of length $r$ and
 $2k+1-r$.
\end{definition}

\begin{figure}[ht]
\begin{center}
\resizebox{5cm}{4.5cm}{\input 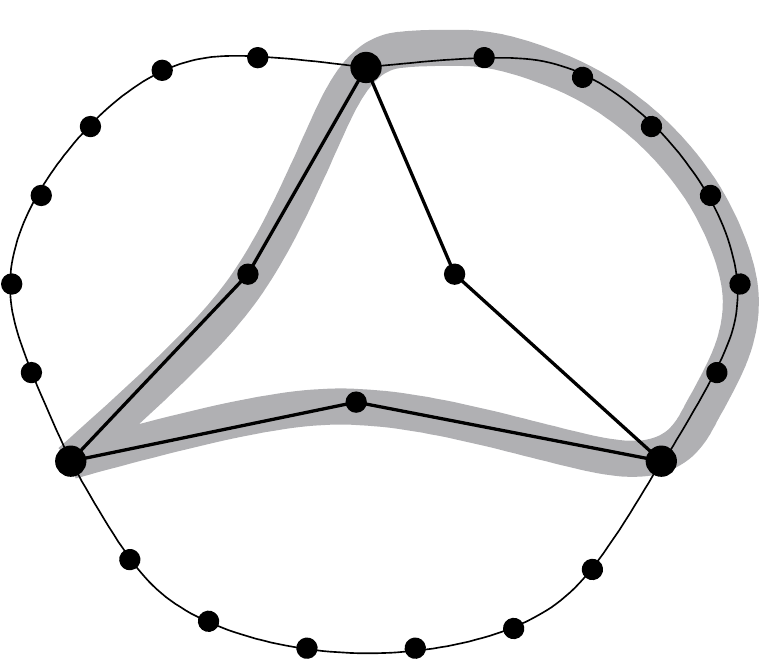_t }
\end{center}
\caption{The graph $T_9(2,2,2)$; an $11$-cycle goes trough $u,v,w$.}
\label{OD222}\label{T_9(2,2,2)}
\end{figure}

The graph $T_{2k+1}(p,q,r)$ is $K_4$-minor-free (for any values of $k, p, q$ and $r$). We are mostly interested in the case where this graph has odd-girth $2k+1$, that is, when there is no odd-cycle going through $u$, $v$ and $w$ of length less than $2k+1$. A triple $\{p,q,r\}$ is called \emph{$k$-good} if the odd-girth of $T_{2k+1}(p,q,r)$ is at least $2k+1$. Clearly, the shortest cycle of $T_{2k+1}(p,q,r)$ going through all three of $u$, $v$ and $w$ has length $p+q+r$ (see Figure~\ref{T_9(2,2,2)} for an example). Note that there are exactly eight cycles going through all three of $u$, $v$ and $w$, four of even length and four of odd length. Thus, deciding whether $\{p,q,r\}$ is $k$-good is an easy task. However, we provide an easier necessary and sufficient condition, as follows.

\begin{proposition}\label{prop:good_triple}
Let $k$ be a positive integer and $p,q,r$ be three integers between
$1$ and $k$. We have the following.
\begin{enumerate}
\item[(i)] If $p+q+r$ is odd, then $\{p,q,r\}$ is $k$-good if and only if $p+q+r\geq 2k+1$.
\item[(ii)] If $p+q+r$ is even, then $\{p,q,r\}$ is $k$-good if and only if $p,q,r$ satisfy the triangular inequalities ($p \leq q + r$, $q \leq p + r$ and $r\leq p+q$).
\end{enumerate}
\end{proposition}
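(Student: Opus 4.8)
The plan is to analyze the eight cycles through $u,v,w$ in $T_{2k+1}(p,q,r)$ directly. Each such cycle is formed by choosing, for each of the three pairs, one of the two connecting paths: the ``short'' path (of length $p$, $q$, or $r$ respectively) or the ``long'' path (of length $2k+1-p$, $2k+1-q$, or $2k+1-r$). Writing $\epsilon_1,\epsilon_2,\epsilon_3 \in \{0,1\}$ for these choices (where $0$ means short and $1$ means long), the length of the corresponding cycle is
\begin{equation*}
\ell(\epsilon_1,\epsilon_2,\epsilon_3) = (p+q+r) + \epsilon_1(2k+1-2p) + \epsilon_2(2k+1-2q) + \epsilon_3(2k+1-2r).
\end{equation*}
Since $2k+1$ is odd, the parity of $\ell$ is that of $(p+q+r) + \epsilon_1 + \epsilon_2 + \epsilon_3$. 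So the four odd cycles correspond to the four choices of $(\epsilon_1,\epsilon_2,\epsilon_3)$ whose parity sum makes the total odd, and $\{p,q,r\}$ is $k$-good precisely when all four of these odd-length values are at least $2k+1$. First I would split into the two cases according to the parity of $p+q+r$.

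\textbf{Case (i): $p+q+r$ odd.} Here the odd cycles are exactly those with $\epsilon_1+\epsilon_2+\epsilon_3$ even, i.e. $(0,0,0)$ and the three cycles using exactly two long paths. The $(0,0,0)$ cycle has length $p+q+r$, so $k$-goodness forces $p+q+r \geq 2k+1$. Conversely, I would check that whenever $p+q+r \geq 2k+1$, the other three odd cycles are also long enough: for the cycle using long paths on, say, the $v$--$w$ and $u$--$w$ pairs, the length is $p + (2k+1-q) + (2k+1-r) = (4k+2) - (q+r-p)$; since $q,r \leq k$ we have $q + r - p \leq 2k$, hence the length is at least $2k+2 > 2k+1$. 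The same estimate handles the other two symmetric cases. So in case (i), $k$-goodness is equivalent to $p+q+r \geq 2k+1$, as claimed.

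\textbf{Case (ii): $p+q+r$ even.} Now the odd cycles are those with $\epsilon_1+\epsilon_2+\epsilon_3$ odd, i.e. the three cycles using exactly one long path (plus the cycle using all three long paths, of length $3(2k+1) - (p+q+r)$, which is clearly $\geq 2k+1$ since $p+q+r \leq 3k < 2(2k+1)$). The binding constraints are therefore the three ``one long path'' cycles. The cycle using the long $u$--$v$ path has length $(2k+1-p) + q + r$, which is $\geq 2k+1$ if and only if $q + r \geq p$; the other two give $p + r \geq q$ and $p + q \geq r$. So $\{p,q,r\}$ is $k$-good in this case precisely when the triangular inequalities hold. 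I would also double-check that the even cycles never accidentally become the odd-girth — but by construction they are even, so they are irrelevant to odd-girth, and any odd cycle not through all of $u,v,w$ lives inside a single ``theta'' between two of the vertices and has length at least $\min(p + (2k+1-p)) = 2k+1$, etc. Actually the cleanest way to dispose of cycles not passing through all three of $u,v,w$: such a cycle uses the two paths between one pair, say $u$ and $v$, giving length $p + (2k+1-p) = 2k+1$ if odd, so these are harmless.

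\textbf{Main obstacle.} The only real subtlety is making sure the enumeration of cycles is complete and that no odd cycle other than the ``through all of $u,v,w$'' ones or the trivial theta-cycles can be shorter than $2k+1$; since $T_{2k+1}(p,q,r)$ is a very structured graph (a subdivision of the theta-graph $\Theta$ obtained by gluing three theta-graphs at a triangle's worth of branch vertices), every cycle is determined by which of the six paths it traverses, and I would organize the argument around that classification. The arithmetic inequalities themselves (using $p,q,r \leq k$) are routine; the care lies in the bookkeeping of parities and in noting that the ``extreme'' odd cycle through all three long paths is automatically long enough so it imposes no constraint.
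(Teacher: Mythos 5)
Your proposal is correct and follows essentially the same route as the paper: both arguments rest on the classification of the cycles of $T_{2k+1}(p,q,r)$ into the three theta-cycles of length exactly $2k+1$ and the eight cycles through all of $u,v,w$, of which four are odd, and then on checking which of those four impose a real constraint. The paper merely compresses the bookkeeping (in case (ii) it orders $p\leq q\leq r$ and identifies $p+q+(2k+1-r)$ as the shortest odd such cycle), while you verify all the non-binding cycles explicitly; the content is the same.
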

\begin{proof}
As mentioned before, $p+q+r$ is the length of a shortest cycle of $T_{2k+1}(p,q,r)$ containing all three of $u$, $v$ and $w$. Thus, (i) follows directly from the definition of a $k$-good triple. For (ii) we may assume, without loss of generality, that $p\leq q\leq r$. Then, a shortest odd cycle of $T_{2k+1}(p,q,r)$ containing all three of $u$, $v$ and $w$ is of length $p+q+2k+1-r$, which is at least $2k+1$ if and only if $p+q\geq r$.
\end{proof}

As a direct consequence of Proposition~\ref{prop:good_triple}, we obtain the following.

\begin{observation}\label{obs:special_good_triple}
Let $k$ be a positive integer and let $p$, $q$ be two integers between
1 and $k$. Then, $\{p,q,k\}$ is $k$-good if and only if $p+q\geq
k$. In particular, $\{p,k,k\}$ is always a $k$-good triple.
\end{observation}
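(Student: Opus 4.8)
The plan is to derive Observation~\ref{obs:special_good_triple} as a direct application of Proposition~\ref{prop:good_triple}, splitting into cases according to the parity of $p+q+k$.

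First I would consider the case where $p+q+k$ is odd. Then part (i) of Proposition~\ref{prop:good_triple} applies, so $\{p,q,k\}$ is $k$-good if and only if $p+q+k\geq 2k+1$, that is, $p+q\geq k+1$. Since $p,q\leq k$ we have $p+q\leq 2k$, so in this parity case the condition $p+q\geq k+1$ is equivalent to $p+q\geq k$ together with $p+q\neq k$; but $p+q=k$ would force $p+q+k=2k$ to be even, contradicting our parity assumption, so $p+q\geq k$ already implies $p+q\geq k+1$ here. Hence in the odd case $\{p,q,k\}$ is $k$-good if and only if $p+q\geq k$.

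Next I would treat the case where $p+q+k$ is even. Then part (ii) applies, and $\{p,q,k\}$ is $k$-good if and only if $p,q,k$ satisfy the triangular inequalities. Since $p,q\leq k$, the inequalities $p\leq q+k$ and $q\leq p+k$ hold trivially, so the only nontrivial requirement is $k\leq p+q$. Thus in the even case $\{p,q,k\}$ is $k$-good if and only if $p+q\geq k$, matching the odd case.

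Combining the two cases gives the first claim: for any $p,q$ between $1$ and $k$, the triple $\{p,q,k\}$ is $k$-good if and only if $p+q\geq k$. For the ``in particular'' statement, take $q=k$; then $p+q=p+k\geq 1+k>k$ since $p\geq 1$, so the condition is always met and $\{p,k,k\}$ is always $k$-good. I do not anticipate a serious obstacle here: the only subtlety is to double-check that the parity-based case split matches up correctly at the boundary $p+q=k$ (which is exactly where part (i) would give a strictly stronger-looking bound), and the observation above that $p+q=k$ forces $p+q+k$ even resolves this cleanly.
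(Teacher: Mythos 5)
Your proof is correct and follows exactly the route the paper intends: the paper states this observation as ``a direct consequence of Proposition~\ref{prop:good_triple}'' without further detail, and your parity case split (including the check that $p+q=k$ forces $p+q+k$ even, so the odd case's threshold $p+q\geq k+1$ collapses to $p+q\geq k$) is precisely the verification being left to the reader.
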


The following definition will be central to our work.

\begin{definition}\label{def:all-k-good}
Let $k$ be a positive integer, $B$ be a graph, and $(\widetilde{B},d_B)$ a $k$-partial distance graph of $B$. For an edge $xy$ with $d_B(xy)=p$, we say that a $k$-good triple $\{p,q,r\}$ is \emph{realized} on $xy$ if there are two vertices $z_1$ and $z_2$ of $B$ with $d_B(x,z_1)=q$, $d_B(y,z_1)=r$, $d_B(x,z_2)=r$ and $d_B(y,z_2)=q$.
We say that $(\widetilde{B},d_B)$ has the \emph{all $k$-good triple property} if $E(\widetilde{B})\neq\emptyset$, and for each edge $xy$ and each $k$-good triple $T = \{p,q,r\}$ with $p=d_B(x,y)$, $T$ is realized on $xy$.
\end{definition}

We observe the following facts with respect to Definition~\ref{def:all-k-good}.

\begin{observation}\label{obs:all-k-good-implies-every-edge-in-cycle}
If a $k$-partial distance graph $(\widetilde{B},d_B)$ of some graph $B$ of odd-girth $2k+1$ has the all $k$-good triple property, then for every edge $xy$ of $\widetilde{B}$, there is a $(2k+1)$-cycle of $B$ containing both $x$ and $y$.
\end{observation}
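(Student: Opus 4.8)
The plan is to use the all $k$-good triple property directly by choosing a convenient $k$-good triple for the given edge $xy$. Let $xy$ be an edge of $\widetilde{B}$ with $d_B(x,y)=p$; note $1\le p\le k$. By Observation~\ref{obs:special_good_triple}, the triple $\{p,k,k\}$ is $k$-good. Applying the all $k$-good triple property to this triple realized on $xy$, we obtain a vertex $z$ (either $z_1$ or $z_2$ works, since here $q=r=k$) with $d_B(x,z)=k$ and $d_B(y,z)=k$.

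Next I would assemble a closed walk from these distances. Take a shortest $x$–$z$ path $P_1$ in $B$ (length $k$), a shortest $z$–$y$ path $P_2$ in $B$ (length $k$), and the edge $yx$ (length $1$); concatenating gives a closed walk $W$ through $x$, $y$, $z$ of length $k+k+1=2k+1$. The key point is that an \emph{odd} closed walk of length $2k+1$ in a graph of odd-girth $2k+1$ must in fact be a cycle: a closed walk of odd length always contains an odd cycle, and this odd cycle has length at least $2k+1$ by the odd-girth hypothesis, yet it is a subgraph of a walk of length $2k+1$, so it has length exactly $2k+1$ and uses every edge of $W$ (hence $W$ is itself a cycle, with no repeated vertices or edges). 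This $(2k+1)$-cycle contains the edge $yx$, so it contains both $x$ and $y$, as desired.

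The one point requiring a little care — and the main (small) obstacle — is verifying that the closed walk $W$ genuinely has no repeated vertices, i.e.\ that it is a cycle rather than a shorter odd cycle plus some even "baggage." The argument above handles this: if $W$ were not a cycle, the odd cycle it contains would have length strictly less than $2k+1$, contradicting the odd-girth of $B$. In particular this also rules out the degenerate possibilities $z=x$ or $z=y$ (which are excluded anyway since $d_B(x,z)=k\ge 1$ and $d_B(y,z)=k\ge 1$) and rules out $P_1$ and $P_2$ sharing an internal vertex. So the proof is short: invoke Observation~\ref{obs:special_good_triple} to get the good triple $\{p,k,k\}$, invoke the all $k$-good triple property to realize it and obtain $z$ at distance $k$ from both $x$ and $y$, concatenate two shortest paths with the edge $xy$ into an odd closed walk of length $2k+1$, and conclude via the odd-girth that this walk is a $(2k+1)$-cycle through $x$ and $y$.
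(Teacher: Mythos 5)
There is a genuine gap, and it comes from conflating an edge of $\widetilde{B}$ with an edge of $B$. An edge $xy$ of the $k$-partial distance graph $\widetilde{B}$ is merely a pair of vertices of $B$ at distance $p=d_B(x,y)\leq k$ in $B$; when $p\geq 2$ there is no ``edge $yx$'' of length $1$ in $B$ to close your walk with. You acknowledge at the outset that $1\leq p\leq k$, but your walk $W$ is computed as $k+k+1=2k+1$, which is only valid for $p=1$. For general $p$ you must close the walk with a shortest $x$--$y$ path of length $p$, and then your chosen triple $\{p,k,k\}$ yields a closed walk of length $2k+p$. If $p$ is even this walk has even length, so it need not contain any odd cycle at all; if $p$ is odd and at least $3$, the walk is odd but of length strictly greater than $2k+1$, so while it does contain a $(2k+1)$-cycle, that cycle need not pass through both $x$ and $y$ --- the whole ``the cycle must use every edge of $W$'' step collapses. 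So the argument as written only proves the statement for edges of weight~$1$.

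The fix is to pick a $k$-good triple whose sum is exactly $2k+1$, which is what the paper does: it takes $\left\{p,\left\lfloor\frac{2k+1-p}{2}\right\rfloor,\left\lceil\frac{2k+1-p}{2}\right\rceil\right\}$, which is $k$-good by Proposition~\ref{prop:good_triple}(i) since the three entries sum to $2k+1$ (and each lies between $1$ and $k$). Realizing this triple on $xy$ gives a vertex $z$, and concatenating the three shortest paths produces a closed walk of length exactly $2k+1$; from there your odd-girth argument (the contained odd cycle has length at least $2k+1$, hence uses all $2k+1$ edges of the walk, hence the walk is a cycle) is correct and is exactly the paper's conclusion. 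In short: the assembly of the walk and the odd-girth endgame are right, but the choice of triple must be adjusted so that $p+q+r=2k+1$.
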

\begin{proof}
Let $p$ denote the distance between $x$ and $y$ in $B$. By the definition of a $k$-partial distance graph, $p$ is less than or equal to $k$. Since
\begin{equation*}
p+\left\lfloor\frac{2k+1-p}{2}\right\rfloor+\left\lceil\frac{2k+1-p}{2}\right\rceil=2k+1,
\end{equation*}
Proposition~\ref{prop:good_triple}(i) tells us that 
\begin{equation*}
\left\{p,\left\lfloor\frac{2k+1-p}{2}\right\rfloor,\left\lceil\frac{2k+1-p}{2}\right\rceil\right\} \text{ is a }k\text{-good triple.}
\end{equation*}
Hence there is a vertex $z$ of $B$ such that $xz$ and $yz$ are both
edges in $\widetilde{B}$ and
\begin{equation*}
d_B(x,z)=\left\lfloor\frac{2k+1-p}{2}\right\rfloor \text{ and } d_B(y,z)=\left\lceil\frac{2k+1-p}{2}\right\rceil.
\end{equation*} 
The paths of these lengths connecting $z$ to $x$ and $z$ to $y$ in
$B$, together with a path from $x$ to $y$ of length $p$, form a closed
walk of length $2k+1$ in $B$. Such a walk must contain an odd cycle,
and since $B$ is of odd-girth $2k+1$, this walk is a $(2k+1)$-cycle in
which $x$ and $y$ are at distance~$p$.
\end{proof}

\begin{observation}\label{obs:all-k-good-implies-all-weights}
If a $k$-partial distance graph $(\widetilde{B},d_B)$ of some graph $B$ has the all $k$-good triple property, then for each $p$ with $1\leq p\leq k$, $(\widetilde{B},d_B)$ contains an edge of weight $p$.
\end{observation}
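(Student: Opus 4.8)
The plan is to proceed by induction on $p$, moving downward from $p=k$ to $p=1$, using the realization of carefully chosen $k$-good triples on edges whose weight we already know exists. Since the all $k$-good triple property explicitly requires $E(\widetilde{B})\neq\emptyset$, there is at least one edge $xy$ of $\widetilde{B}$; let $p_0=d_B(x,y)$ be its weight, so $1\le p_0\le k$. The first step is to use this edge as a seed.

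The key step is to show that from an edge of weight $p$ with $p\ge 2$ we can always produce an edge of weight $p-1$, and that we can also produce an edge of weight $k$. For the latter, given any edge $xy$ of weight $p\le k$, Observation~\ref{obs:special_good_triple} tells us $\{p,k,k\}$ is a $k$-good triple (here we need $p+k\ge k$, which is immediate). Realizing $\{p,k,k\}$ on $xy$ yields a vertex $z_1$ with $d_B(x,z_1)=k$, so $xz_1$ is an edge of $\widetilde{B}$ of weight $k$; thus weight $k$ is always present. For the downward step, suppose $\widetilde{B}$ has an edge $xy$ of weight $p$ with $2\le p\le k$. I would like to find a $k$-good triple of the form $\{p,p-1,s\}$ for some $s$ with $1\le s\le k$, because realizing such a triple on $xy$ gives a vertex $z$ with $d_B(x,z)=p-1$, hence an edge of weight $p-1$. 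To find a valid $s$: if we take $s$ with $p+(p-1)+s$ even and $s$ satisfying the triangular inequalities with $p,p-1$, Proposition~\ref{prop:good_triple}(ii) applies. Taking $s=p-1$ works when $p+(p-1)+(p-1)=3p-2$ has the right parity, i.e.\ when $p$ is even; the triangular inequalities $\{p,p-1,p-1\}$ hold since $p\le (p-1)+(p-1)$ for $p\ge 2$. When $p$ is odd, instead take $s=p$, so the sum $3p-1$ is even and $\{p,p-1,p\}$ satisfies the triangular inequalities trivially. In both cases $1\le s\le k$ since $s\in\{p-1,p\}$ and $2\le p\le k$ (with $p-1\ge 1$). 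So in every case a suitable triple exists and can be realized on $xy$, producing an edge of weight $p-1$.

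Combining these two steps: starting from the seed edge of weight $p_0$, repeatedly applying the downward step yields edges of every weight $1,2,\dots,p_0$; and from the weight-$1$ edge (or indeed any edge) the $\{p,k,k\}$ trick gives an edge of weight $k$, from which the downward step again yields edges of every weight $k-1,k-2,\dots,1$. Hence $\widetilde{B}$ contains an edge of weight $p$ for each $p$ with $1\le p\le k$, which is the claim.

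I do not expect a serious obstacle here; the only thing requiring care is the parity bookkeeping in selecting the auxiliary value $s$ so that Proposition~\ref{prop:good_triple} applies, and checking the boundary cases $p=2$ (where $p-1=1$ is still a legal weight) and the degenerate possibility that the seed edge already has weight $k$. If one prefers a cleaner route that avoids the downward induction entirely, an alternative is: from the seed edge of weight $p_0$ obtain a weight-$k$ edge via $\{p_0,k,k\}$, then for each target $p\in\{1,\dots,k\}$ directly realize the triple $\{k,p,k\}$ (which is $k$-good by Observation~\ref{obs:special_good_triple} since $p+k\ge k$) on the weight-$k$ edge, immediately yielding a vertex at distance $p$ from one endpoint and hence an edge of weight $p$. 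This one-shot argument is arguably the most economical and I would present it as the main proof, mentioning the inductive version only if the reader finds it more transparent.
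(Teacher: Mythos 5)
Your proposal is correct, and the ``one-shot'' argument you say you would present as the main proof is exactly the paper's proof: take any edge (of some weight $d\leq k$), realize the $k$-good triple $\{d,k,k\}$ on it to obtain an edge of weight $k$, then realize $\{p,k,k\}$ on that edge to obtain an edge of weight $p$ for each $1\leq p\leq k$, both triples being $k$-good by Observation~\ref{obs:special_good_triple}. The downward induction you describe first is also valid (the parity bookkeeping checks out), but it is superfluous given the direct argument.
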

\begin{proof}
By definition, $(\widetilde{B},d_B)$ contains at least one edge $e$ that has weight $d\leq k$. By Observation~\ref{obs:special_good_triple}, $\{d,k,k\}$ is a $k$-good triple. Therefore, again by definition, $(\widetilde{B},d_B)$ contains an edge of weight $k$. Noting that $\{p,k,k\}$ is also a $k$-good triple completes the proof.
\end{proof}

\subsection{Main theorem}

We are now ready to prove the main theorem of this section. Roughly speaking, our claim is that for a graph $B$ of odd-girth~$2k+1$, the existence of a $k$-partial distance-graph of $B$ with the all $k$-good triple property is both necessary and sufficient for $B$ to be a minimal bound for $\SP{2k+1}$. The more precise statement is as follows.

\begin{theorem}\label{thm:TripleProperties}
Let $B$ be a graph and $(\widetilde{B},d_B)$ be a $k$-partial distance graph of $B$. If $B$ is of odd-girth $2k+1$ and $(\widetilde{B},d_B)$ has the all $k$-good triple property, then $B$ is a bound for $\SP{2k+1}$. Furthermore, if $B$ is a bound for $\SP{2k+1}$ and $B$ has odd-girth $2k+1$, then there exists a $k$-partial distance graph $(\widetilde{B},d_B)$ of $B$ which has the all $k$-good triple property.
\end{theorem}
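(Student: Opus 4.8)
The plan is to prove the two directions separately, using the characterization of $K_4$-minor-free graphs as partial $2$-trees (Theorem~\ref{thm:2-trees}) together with the machinery of edge-weighted homomorphisms.

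\textbf{Sufficiency.} Suppose $(\widetilde{B},d_B)$ has the all $k$-good triple property; I want to show every $G\in\SP{2k+1}$ satisfies $G\to B$. First I would reduce to the case where $G$ is edge-maximal in $\SP{2k+1}$, i.e.\ a $2$-tree of odd-girth exactly $2k+1$ (if $G$ has smaller odd-girth or is not maximal, embed it in such a graph — one needs to check a maximal supergraph of odd-girth $\geq 2k+1$ exists inside some $2$-tree, which follows from adding edges greedily while respecting the odd-girth constraint). For such a $G$, I would pass to the weighted graph $(G,d_G)$ and prove by induction on $|V(G)|$ that $(G,d_G)$ admits a weight-preserving homomorphism into $(\widetilde{B},d_B)$ — note that since $G$ has odd-girth $2k+1$ and is a $2$-tree, Observation~\ref{obs:DistanceDetmByCycle} forces every edge $xy$ of $G$ added in the $2$-tree construction to lie on a $(2k+1)$-cycle, so $d_G(x,y)\leq k$ and the triple formed by a new vertex with its two neighbours is $k$-good. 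The inductive step: remove the last vertex $z$ of the $2$-tree order, with neighbours $x,y$; by induction get a weight-preserving map $\phi$ of $(G-z,d_{G-z})$ into $(\widetilde{B},d_B)$; observe $d_{G-z}=d_G$ on the retained edges (this is the delicate point — deleting a leaf of a $2$-tree does not change distances between the remaining vertices, which again uses that all edges lie on $(2k+1)$-cycles); then $\{d_G(x,y),d_G(x,z),d_G(y,z)\}$ is a $k$-good triple on the edge $\phi(x)\phi(y)$, so by the all $k$-good triple property there is a suitable image for $z$. Finally a weight-preserving homomorphism of $(G,d_G)$ into $(\widetilde{B},d_B)$ is in particular an (ordinary) homomorphism $G\to B$, since weight-$1$ edges of $G$ map to weight-$1$ edges of $\widetilde{B}$, i.e.\ edges of $B$. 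The main obstacle here is the careful bookkeeping that distances in $G$ restricted to subsets behave well and that the base case (a single edge, or a $(2k+1)$-cycle) is handled — the cycle case needs Observation~\ref{obs:all-k-good-implies-all-weights} to locate an edge of each weight.

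\textbf{Necessity.} Conversely, suppose $B$ is a bound for $\SP{2k+1}$ with odd-girth $2k+1$. I would construct $\widetilde{B}$ directly: for each ordered pair of vertices $(x,y)$ of $B$ at distance $p$ in $B$ with $1\le p\le k$ \emph{such that $x,y$ lie on a common $(2k+1)$-cycle of $B$}, include the edge $xy$ with weight $p$; call the result $(\widetilde{B},d_B)$. It is a $k$-partial distance graph by construction, and $E(\widetilde{B})\neq\emptyset$ since $B$ has odd-girth $2k+1$, hence contains a $(2k+1)$-cycle, hence an edge (weight $1$) on such a cycle. To verify the all $k$-good triple property, fix an edge $xy$ of $\widetilde{B}$ with $d_B(x,y)=p$ and a $k$-good triple $\{p,q,r\}$. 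Here is the key idea: build the graph $T_{2k+1}(p,q,r)$ from Definition~\ref{def:Tpqr}, which is $K_4$-minor-free and, since $\{p,q,r\}$ is $k$-good, has odd-girth $2k+1$, so it lies in $\SP{2k+1}$ and therefore maps to $B$. Actually I need a gadget that also pins down the images of its three special vertices on the edge $xy$: I would glue $T_{2k+1}(p,q,r)$ to a $(2k+1)$-cycle through $x$ and $y$ via a construction forcing, under any homomorphism to $B$, the vertex playing the role of ``$u$" to map to $x$ and ``$v$" to $y$ (for instance, by identifying the $u$-$v$ path of length $p$ in $T$ with the corresponding arc of a $(2k+1)$-cycle $C$ of $B$ through $x,y$, and using Observation~\ref{obs:HomDistanceDetmByCycle} to control where $C$ goes); then the image $z_1$ of $w$ satisfies $d_B(x,z_1)=q$ and $d_B(y,z_1)=r$ because $w$ lies on a $(2k+1)$-cycle with $u$ and with $v$ in $T$, and homomorphisms preserve distances of vertices sharing a $(2k+1)$-cycle (Observation~\ref{obs:HomDistanceDetmByCycle} again); swapping the two $u$-$w$ and $v$-$w$ paths gives $z_2$. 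Thus $\{p,q,r\}$ is realized on $xy$. The main obstacle in this direction is designing the gadget that forces $u\mapsto x$, $v\mapsto y$ while remaining in $\SP{2k+1}$, and checking the odd-girth of the combined gadget stays $2k+1$ — this is where the $T_{2k+1}(p,q,r)$ family and Proposition~\ref{prop:good_triple} do the heavy lifting, and one must be careful that gluing along a path does not create short odd cycles, which is guaranteed by Observation~\ref{obs:DistanceDetmByCycle} applied within $B$ and within $T$.
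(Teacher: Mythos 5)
Both directions of your proposal follow the paper's skeleton (a $2$-tree induction for sufficiency, $T_{2k+1}(p,q,r)$ gadgets for necessity), but each has a genuine gap at exactly the point you flag as delicate.

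\textbf{Sufficiency.} Your reduction to ``a $2$-tree of odd-girth exactly $2k+1$'' is vacuous for $k\geq 2$: every $2$-tree on at least three vertices contains a triangle, so no member of $\SP{2k+1}$ is a $2$-tree. One must complete $G$ to a $2$-tree $H$ that is \emph{not} in $\SP{2k+1}$, and the added edges of $H$ need not join vertices lying on a common $(2k+1)$-cycle of $G$; indeed $d_G(x,y)$ can exceed $k$ or be infinite (take $G$ a long path, a tree, or a disconnected graph, all in $\SP{2k+1}$). So your claim that every $2$-tree edge receives weight at most $k$ fails, and with it the assertion that the triple attached to a new vertex is automatically $k$-good. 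The missing device is the truncated weight $\omega(xy)=\min\{d_G(x,y),k\}$ on $E(H)$, together with a case analysis on how many of the three weights are truncated to $k$: the truncated cases are handled by Observation~\ref{obs:special_good_triple}, and the untruncated case by the triangle inequality for $d_G$ plus the odd-girth hypothesis. Computing one global weight function in $G$ also removes your worry about distances changing when the last vertex is peeled off.

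\textbf{Necessity.} Your gadget does not force $u\mapsto x$ and $v\mapsto y$. Gluing $T_{2k+1}(p,q,r)$ along an arc of an abstract $(2k+1)$-cycle only guarantees that this cycle maps onto \emph{some} $(2k+1)$-cycle of $B$ and that $u,v$ land on \emph{some} pair at distance $p$ on it; nothing steers them to the particular pair $x,y$. Hence your argument shows only that each $k$-good triple is realized somewhere in $B$, not on every edge of your $\widetilde{B}$. Moreover your choice of $\widetilde{B}$ (all pairs on a common $(2k+1)$-cycle) is in general too large: some of its edges may genuinely fail the property even when $B$ is a bound, which is precisely why the theorem asserts only the existence of \emph{some} $k$-partial distance graph. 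The paper's fix is a minimality argument over weighted graphs: it introduces the class $\mathcal{C}$ of weighted $2$-trees arising from members of $\SP{2k+1}$, takes $B^*\supseteq B$ edge-minimal such that $(B^*,d_B)$ bounds $\mathcal{C}$, and uses minimality to produce, for each retained edge $xy$ of weight $p$, a weighted graph $G_{xy}\in\mathcal{C}$ every homomorphism of which must send some weight-$p$ edge onto $xy$; gluing a $2$-tree completion of $T_{2k+1}(p,q,r)$ onto \emph{all} weight-$p$ edges of $G_{xy}$ then forces the triple to be realized on $xy$. Without some such device for pinning down images, the necessity direction does not go through.
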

\begin{proof}
For the first part of the claim, suppose that $B$ is of odd-girth $2k+1$ and that $(\widetilde{B},d_B)$ has the all $k$-good triple property. We show that every $K_4$-minor-free graph of odd-girth at least $2k+1$ admits a homomorphism to $B$.

Let $G$ be a graph of order $n$ in $\SP{2k+1}$. By Theorem~\ref{thm:2-trees}, $G$ is a partial $2$-tree, hence it is obtained from a $2$-tree $H$ by removing some edges. Let $H$ be a $2$-tree such that $V(H) = V(G)$ and $E(G)$ is a subset of $E(H)$. The $2$-tree structure of $H$ gives us a linear ordering of its vertices $v_0, v_1, \ldots ,v_n$ such that the edge $v_0v_1$ is in $H$ and for any $i$ between $2$ and $n$, $v_i$ has exactly two neighbours among $\{v_j : j < i\}$ in $H$ that form an edge in $H$. For $i$ between $1$ and $n$, let $H_i$ denote the graph $H$ induced by the vertices $v_0,v_1, \ldots, v_i$.

For any edge $xy$ in $H$, we define $\omega(xy)=\min \{ d_G(x,y), k\}$. We now build a weighted graph homomorphism of $(H,\omega)$ to $(\widetilde{B},d_B)$. For this, we build a weighted graph homomorphism of $(H_i,\omega)$ to $(\widetilde{B},d_B)$ for each $i$ from $1$ to $n$. When $i$ is stricly less than $n$, $\omega$ is understood as its restriction to $V(H_i)$.
 
Since $(\widetilde{B},d_B)$ has the all $k$-good triple property, by Observation~\ref{obs:all-k-good-implies-all-weights}, there exists an edge $xy$ of weight $\omega(v_0v_1)$.  Let us map $v_0$ and $v_1$ to $x$ and $y$ respectively. Then $(H_1,\omega)$ admits a weighted graph homomorphism to $(\widetilde{B},d_B)$.

Suppose that for some $i$ between $1$ and $n-1$, $(H_i,\omega)$ has a weighted graph homomorphism $\phi$ to $(\widetilde{B},d_B)$. We shall extend it by selecting an adequate image for $v_{i+1}$. Let $x$ and $y$ be the two neighbours of $v_{i+1}$ in $H_{i+1}$. They form an edge in $H_i$ and thus  $d_B(\phi(x),\phi(y))=\omega(xy)$. Let us define $p=\omega(xy)$, $q=\omega(v_{i+1}x)$ and $r=\omega(v_{i+1}y)$. 

We claim that $\{p,q,r\}$ is a $k$-good triple. To prove our claim we consider three possibilities: 

\begin{itemize}
\item At least two of $p,q$ and $r$ are equal to $k$: this case follows from Observation~\ref{obs:special_good_triple}. 

\item Exactly one of $p,q$ and $r$ is equal to $k$: let us say $p=k$ without loss of generality. It means that the distance in $G$ between $x$ and $y$ is greater than or equal to $k$ while $q$ and $r$ are the actual distances between $v_{i+1}$ and $x$ and $y$. The triangular inequality is satisfied by the distances in $G$ so that $q+r \geq d_G(x,y) \geq k = p$. By Observation~\ref{obs:special_good_triple}, $\{p,q,r\}$ is a $k$-good triple.

\item None of $p,q$ and $r$ is equal to $k$: then $p,q$ and $r$ are the actual distances and verify the triangular inequalities. Moreover, if $p+q+r$ is odd, this sum is at least $2k+1$ since the odd-girth of $G$ is at least $2k+1$. By Proposition~\ref{prop:good_triple}, $\{p,q,r\}$ is a $k$-good triple.
\end{itemize}

Then, since $(\widetilde{B},d_B)$ has the all $k$-good triple property, there exists a vertex $z$ in $V(B)$ such that $d_B(z,\phi(x))=q$ and $d_B(z,\phi(y))=r$. Let $\phi(v_{i+1})$ be this vertex $z$. Now $\phi$ is a homomorphism of $(H_{i+1},\omega)$ to $(\widetilde{B},d_B)$.

By the end of the process, we have proved the existence of (and built) a homomorphism of $(H,\omega)$ to $(\widetilde{B},d_B)$. The edges of $G$ are exactly those edges of $H$ with weight~$1$. Since $\phi$ sends these edges to edges of $(\widetilde{B},d_B)$ with weight~$1$, it means that $\phi$ induces a homomorphism of $G$ to $B$.

\bigskip

To prove the second part of the theorem, we first assume that $B$ is a minimal bound of odd-girth $2k+1$ for $\SP{2k+1}$; our aim is to build a $k$-partial distance graph of $B$ with the all $k$-good triple property.

Let $\mathcal C$ be the class of $k$-partial distance graphs $(\widetilde{G},d_G)$ satisying:\\
(i) $G\in \SP{2k+1}$;\\
(ii) $\widetilde{G}$ is a $2$-tree;\\
(iii) $(\widetilde{G},d_G)$ is a $k$-partial distance graph of $G$;\\
(iv) for every edge $uv$ of $\widetilde{G}$, $u$ and $v$ lie on a common $(2k+1)$-cycle of $G$.

\medskip

It is clear that $\mathcal C$ is nonempty, for example for $G=C_{2k+1}$ it is easy to construct a corresponding $k$-partial distance graph satisfying (i)--(iv).

Our aim is to show that if $B^*$ ($B\subseteq B^*$) is minimal such that $(B^*,d_B)$ bounds $\mathcal C$, then $(B^*,d_B)$ has the all $k$-good triple property. We first need to show that such a $B^*$ exists. To this end, we show that $(K_{|V(B)|},d_B)$ is a bound for $\mathcal C$.

Indeed, since $B$ bounds $\SP{2k+1}$, there exists a homomorphism $f:G\to B$. We claim that $f$ is also a weighted graph homomorphism of $(\widetilde{G},d_G)$ to $(K_{|V(B)|},d_B)$. Clearly, $f$ preserves edges of weight~$1$. Now, let $uv$ be an edge of weight $p\geq 2$ in $\widetilde{G}$. By Property~(iv), $u$ and $v$ lie on a common $(2k+1)$-cycle of $G$. By Observation~\ref{obs:DistanceDetmByCycle}, we have $d_G(u,v)=d_B(f(u),f(v))$, that is, $uv$ is mapped to an edge of weight $p$ which shows the claim for $f$.

Now, consider a minimal graph $B^*$ with $B\subseteq B^*$ such that $(B^*,d_B)$ bounds the class $\mathcal C$. By Property~(iv), every edge of a weighted graph in $\mathcal C$ has weight at most~$k$, therefore this is also the case for $B^*$. In other words, $(B^*,d_B)$ is a $k$-partial distance graph of $B$. We will show that $(B^*,d_B)$ has the all $k$-good triple property.

Clearly, we have $E(B^*)\neq\emptyset$. Therefore, assume by contradiction that for some edge $xy$ with $d_B(x,y)=p$ and some $k$-good triple $\{p,q,r\}$, there is no vertex $z$ in $B^*$ with $xz,yz\in E(B^*)$, $d_B(x,z)=q$ and $d_B(y,z)=r$. By minimality of $B^*$, there exists a weighted graph $(\widetilde{G_{xy}},d_{G_{xy}})$ of $\mathcal C$ such that for any homomorphism $f$ of $(\widetilde{G_{xy}},d_{G_{xy}})$ to $(B^*,d_B)$, there is an edge $ab$ of $\widetilde{G_{xy}}$ of weight $p$, $f(a)=x$ and $f(b)=y$.

We now build a new weighted graph from $(\widetilde{G_{xy}},d_{G_{xy}})$ as follows. Let $\widehat{T}$ be a $2$-tree completion of the graph $T=T_{2k+1}(p,q,r)$ where $\widehat{T}$ contains the triangle $uvw$; this triangle has weights $p$, $q$ and $r$ in $(\widehat{T},d_T)$. Then, for each edge $ab$ of $\widetilde{G_{xy}}$ with $d_{G_{xy}}(a,b)=p$, we add a distinct copy of $(\widehat{T},d_T)$ to $(\widetilde{G_{xy}},d_{G_{xy}})$ by identifying the edge $ab$ with the edge $uv$ of $(\widehat{T},d_T)$ (that both have weight $p$). It is clear that the resulting weighted graph, that we call $(\widehat{G'_{xy}},d_{G'_{xy}})$, belongs to the class $\mathcal C$. Moreover, $(\widetilde{G_{xy}},d_{G_{xy}})$ is a subgraph of $(\widehat{G'_{xy}},d_{G'_{xy}})$ (indeed the new vertices added in the construction have not altered the distances between original vertices of $G_{xy}$). Thus, there exists a homomorphism $\phi$ of $(\widehat{G'_{xy}},d_{G'_{xy}})$ to $(B^*,d_B)$, whose restriction to the subgraph $(\widetilde{G_{xy}},d_{G_{xy}})$ is also a homomorphism. Therefore, by the choice of $G_{xy}$, at least one pair $a,b$ of vertices of $\widehat{G'_{xy}}$ with $d_{G'_{xy}}(a,b)=p$ is mapped by $\phi$ to $x$ and $y$, respectively. But then, the copy of $\widehat{T_{2k+1}}(p,q,r)$ added to $G$ for this pair forces the existence of the desired triangle on edge $xy$ in $B^*$, which is a contradiction.

To complete the proof, if $B$ is not minimal, consider a minimal subgraph $B_m$ of $B$ that is a bound for $\SP{2k+1}$. As proved above, there is a partial distance graph $(B_m^*,d_{B_m})$ of $B_m$ with the all $k$-good triple property. Using Observation~\ref{obs:all-k-good-implies-every-edge-in-cycle} and Observation~\ref{obs:DistanceDetmByCycle}, we conclude that $d_{B_m}$ and $d_B$ coincide on $E(B_m^*)$. Therefore, $(B_m^*,d_{B_m})$ is also a partial distance graph of $B$, which completes the proof.
\end{proof}

\subsection{Some properties of minimal bounds}

We now show that a minimal bound must satisfy some simple structural conditions. The following lemmas are examples of such conditions that are useful in the theoretical investigation of minimal bounds, as we will see in Section~\ref{sec:smallvalues}.

\begin{lemma}\label{Degree2Vertex-cycle}
Let $k$ be strictly greater than 1. If $B$ is a minimal bound (in
terms of subgraph inclusion) of odd-girth $2k+1$ for $\SP{2k+1}$, then
any degree~$2$-vertex belongs to a $6$-cycle.
\end{lemma}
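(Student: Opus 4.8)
The plan is to use Theorem~\ref{thm:TripleProperties} to get a $k$-partial distance graph $(\widetilde{B},d_B)$ of $B$ with the all $k$-good triple property, and then to exploit that property at a degree-$2$ vertex $v$ of $B$. Let $v$ have neighbours $a$ and $b$ in $B$. First I would observe that, since $v$ has degree $2$, the only edges of $\widetilde{B}$ incident to $v$ whose weight is $1$ are $va$ and $vb$; in particular, inside any $(2k+1)$-cycle of $B$ through $v$, the two cycle-edges at $v$ are exactly $va$ and $vb$. By Observation~\ref{obs:all-k-good-implies-every-edge-in-cycle}, the edge $va$ (of weight $1$) lies on a $(2k+1)$-cycle of $B$, so $a$ and $v$ are at distance $1$, and by the same reasoning the edge $ab$ — which must be present in $\widetilde{B}$ by the realization on $va$ of a suitable good triple — has some weight $d_B(a,b)=t$ with $1\le t\le k$; since $a$ and $b$ have the common neighbour $v$, in fact $t\le 2$, and $t=2$ unless $a=b$ (which is impossible as $B$ is simple and $v$ has two distinct neighbours) or $ab\in E(B)$.

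Next I would apply the all $k$-good triple property to the edge $ab$. Its weight is $t\in\{1,2\}$. The key is to find a $k$-good triple $\{t,q,r\}$ forcing a short even cycle through $a$, $b$ and its witness. Concretely, I expect to use the triple $\{t,1,1\}$ when $t=2$: this triple is $k$-good (it satisfies the triangular inequalities and has even sum $4$), so by Definition~\ref{def:all-k-good} there is a vertex $z$ of $B$ with $d_B(a,z)=d_B(b,z)=1$, i.e.\ $z$ is a common neighbour of $a$ and $b$ in $B$ distinct from $v$ (the two witnesses $z_1,z_2$ coincide here, but one of them may equal $v$, so a little care is needed — one of the realizing vertices is a common neighbour other than $v$, because the realization must produce genuine vertices of $B$ at those distances, and if the only common neighbour were $v$ the triple could not be realized without reusing $v$; this needs to be argued carefully). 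Then $v,a,z,b$ form a $4$-cycle, which contradicts odd-girth $2k+1>3$ forcing girth (even girth) considerations — but a $4$-cycle is allowed when $k\ge 2$! So $\{t,1,1\}$ is the wrong choice; instead the right triple to realize on $ab$ is one that produces a vertex at distance $2$ from one endpoint and $1$ (or $2$) from the other, stitching together with the path $a v b$ of length $2$ to make a $6$-cycle. I would take the good triple $\{2,2,2\}$ on the edge $ab$ (valid since $t=d_B(a,b)=2$ and $\{2,2,2\}$ is $k$-good for $k\ge 2$ by Proposition~\ref{prop:good_triple}(ii)): this yields vertices $z_1,z_2$ with $d_B(a,z_1)=d_B(b,z_1)=2$. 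Paths of length $2$ from $a$ to $z_1$ and from $b$ to $z_1$, together with the path $a\,v\,b$ of length $2$, form a closed $6$-walk through $v$; as in Observation~\ref{obs:all-k-good-implies-every-edge-in-cycle}, such a walk contains a cycle, and a careful analysis of which vertices can coincide (using that $B$ has odd-girth $2k+1\ge5$, so no $3$-cycle, and that the internal vertices of the two length-$2$ paths are genuinely new or forced to give a $6$-cycle) shows that it must be a $6$-cycle through $v$.

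The main obstacle, as flagged above, is the degeneracy analysis: the vertices $z_1,z_2$ produced by the realization, and the internal vertices of the length-$2$ paths witnessing the distances $d_B(a,z_i)=d_B(b,z_i)=2$, could in principle collapse onto $v$, $a$, $b$, or each other, producing a shorter closed walk rather than an honest $6$-cycle. Handling this requires checking all the cases: if an internal vertex coincides with $v$ we would get a cycle of length $3$ or $4$, but a $3$-cycle is forbidden by odd-girth $2k+1\ge5$, and a $4$-cycle through $v$ would have to use edges $va,vb$ and hence would force $d_B(a,b)\le 2$ and actually gives exactly the situation that a $4$-cycle exists — here I would need to first rule out the $4$-cycle case, i.e.\ prove that if $v$ lies on a $4$-cycle then (by minimality of $B$, perhaps contracting or rerouting) we reach a contradiction, OR simply note that a $4$-cycle $v a z b$ also contains the edge $ab$-route of length $2$ and re-examine; in fact the cleanest route is: if $v$ lies on no $4$-cycle then the $6$-walk above cannot degenerate to anything shorter than a $6$-cycle (a $5$-cycle would be odd and too short, a $4$- or $3$-cycle is excluded), so $v$ lies on a $6$-cycle; and if $v$ does lie on a $4$-cycle we are done since $k\ge 2$ means... no — the statement wants a $6$-cycle specifically, so we would still need to upgrade. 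I would therefore likely argue directly that the closed $6$-walk through $v$ cannot have a chord or repeated vertex creating a shorter odd cycle (forbidden) and that any shorter even sub-cycle through $v$ of length $4$ would itself, by reapplying the all $k$-good triple property to one of its weight-$2$ edges, generate the desired $6$-cycle — making the argument self-contained. This bookkeeping over the coincidence cases is the delicate part; the conceptual core is just: a degree-$2$ vertex forces its two neighbours to be at distance $2$, and the $\{2,2,2\}$-realization on that distance-$2$ edge stitches a $6$-cycle back through the degree-$2$ vertex.
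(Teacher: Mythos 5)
Your skeleton is the paper's: realize $\{1,1,2\}$ on an edge at the degree-$2$ vertex $v$ to force its two neighbours $a,b$ to be at distance~$2$ and to force the weight-$2$ edge $ab$ into $(\widetilde{B},d_B)$, then realize $\{2,2,2\}$ on $ab$ and stitch the witness back through the path $a\,v\,b$ to get a closed $6$-walk through $v$. The degeneracy analysis you worry about is also essentially benign and comes down to one case: the midpoint of the $2$-path from $a$ to the witness cannot be $v$ (it would make the witness a neighbour of $v$, hence $a$ or $b$, contradicting distance $2$) and cannot be $b$ (it would make $a$ and $b$ adjacent); the only genuine collapse is when the two midpoints coincide, which produces a $4$-cycle through $v$. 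So, exactly as in the paper, the walk yields either a $6$-cycle or a $4$-cycle through $v$.

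The genuine gap is that you never dispose of the $4$-cycle case, and of the two escape routes you sketch, the one you lean towards does not work. Reapplying the all $k$-good triple property to a weight-$2$ edge of the $4$-cycle cannot ``upgrade'' it to a $6$-cycle: the new realization of $\{2,2,2\}$ may again be witnessed by a configuration whose two midpoints coincide, i.e.\ by another $4$-cycle through $v$, so the argument does not terminate. The correct (and one-line) resolution is the other route you mention but do not execute: a minimal bound is a core (otherwise its core would be a strictly smaller bound of the same odd-girth), and if the degree-$2$ vertex $v$ lies on a $4$-cycle $v\,a\,m\,b\,v$ then $N(v)=\{a,b\}\subseteq N(m)$, so mapping $v$ to $m$ and fixing everything else is a homomorphism of $B$ onto a proper subgraph --- contradicting that $B$ is a core. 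You need to commit to and state this core argument; without it the lemma is not proved. (Two smaller points: the case $d_B(a,b)=1$ should be excluded explicitly by noting that $a\,v\,b\,a$ would be a triangle while the odd-girth is $2k+1\ge 5$; and your aside that ``a $5$-cycle would be odd and too short'' is false for $k=2$, though no $5$-cycle degeneration can actually occur in the walk.)
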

\begin{proof}Let $u$ be a vertex of degree~$2$ in $B$, with $v,w$ its two neighbours. By the second part of Theorem~\ref{thm:TripleProperties}, we can assume that the partial distance graph $(\widetilde{B},d_B)$ of $B$ with the all $k$-good triple property contains all edges of $B$, in particular, the edge $uv$. The triple $\{1,1,2\}$ is $k$-good and hence, it must be realizable on edge $uv$, which implies that the edge $vw$ of weight~$2$ belongs to $(\widetilde{B},d_B)$. But then, the only way to realize the $k$-good triple $\{2,2,2\}$ on $vw$ is if $u$ is part of a $4$-cycle or a $6$-cycle. But if $u$ is part of a $4$-cycle, then $B$ is not a core, contradicting its minimality.
\end{proof}

In the following lemma, we remark that the claim holds with respect to minimality in terms of \emph{induced} subgraph inclusion, which is stronger than the minimality condition of Theorem~\ref{thm:TripleProperties} and Lemma~\ref{Degree2Vertex-cycle} (which is just about subgraph inclusion).

\begin{lemma}\label{Adjacent2Vertices}
Let $k$ be strictly greater than 1. If $B$ is a minimal bound (in
terms of induced subgraph inclusion) of odd-girth $2k+1$ for
$\SP{2k+1}$, then the set of degree $2$-vertices of $B$ forms an
independent set.
\end{lemma}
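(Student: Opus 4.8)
The plan is to suppose, for contradiction, that $B$ is a minimal bound (with respect to induced subgraph inclusion) of odd-girth $2k+1$ for $\SP{2k+1}$ in which two degree-$2$ vertices $u$ and $u'$ are adjacent. Let $v$ be the other neighbour of $u$ and $v'$ be the other neighbour of $u'$ (these could coincide, which is one subcase to keep an eye on). The strategy is to combine the structural information coming from the all $k$-good triple property applied at the short thread $v\,u\,u'\,v'$ with the fact that $B$ must be a core (minimality forbids a retract onto a proper induced subgraph): I will locate a short even cycle through $u$ and $u'$ forced by realizing the $k$-good triples, and then show that this cycle admits a retraction of $B$ onto $B-u$ or $B-u'$, contradicting coreness.

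Concretely, by the second part of Theorem~\ref{thm:TripleProperties}, fix a $k$-partial distance graph $(\widetilde{B},d_B)$ containing all edges of $B$ and having the all $k$-good triple property. The edge $uu'$ has weight~$1$. Since $\{1,1,2\}$ is $k$-good, realizing it on $uu'$ forces a common neighbour of weight pattern $(1,2)$ or forces the weight-$2$ edge $vu'$ and the weight-$2$ edge $uv'$ into $\widetilde{B}$; in any case, arguing as in Lemma~\ref{Degree2Vertex-cycle}, the weight-$2$ edges incident to $u$ and $u'$ are present, and realizing $\{2,2,2\}$ on them (using that $u,u'$ have degree~$2$, so their only neighbours are the prescribed ones) forces $u$ and $u'$ to lie together on a $4$-cycle or a $6$-cycle of $B$. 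A $4$-cycle through $u$ (or $u'$) immediately contradicts $B$ being a core, exactly as in Lemma~\ref{Degree2Vertex-cycle}. So $u$ and $u'$ lie on a common $6$-cycle $C$; since $\deg u=\deg u'=2$, the two neighbours of $u$ on $C$ are $v,u'$ and the two neighbours of $u'$ on $C$ are $u,v'$, so $C$ has the form $v\,u\,u'\,v'\,a\,b\,v$ (with possibly $v=v'$, in which case $C$ is a $4$-cycle, again a contradiction, so $v\neq v'$). Now I claim $B$ retracts onto $B-u$: define $f$ to be the identity on $V(B)\setminus\{u\}$, and send $u$ to $a$ (the vertex of $C$ at distance~$2$ from both $v$ and $u'$ along $C$, i.e.\ $v'$'s other $C$-neighbour). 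One checks $f$ is a homomorphism: the only edges incident to $u$ are $uv$ and $uu'$, and $f(u)=a$ is adjacent to $v'$ hence we actually need $a$ adjacent to both $v$ and $u'$ — here I instead take $f(u)$ to be whichever vertex of $C$ at $C$-distance~$2$ from $u$ works, namely $b$, which is adjacent to $v$; and for the edge $uu'$ we need $f(u)\sim u'$, so the correct target is the unique vertex of $C$ adjacent to $u'$ other than $u$, i.e.\ $v'$, provided $v'\sim v$ — if not, we route through the $6$-cycle by composing with the obvious fold of the path $v\,u\,u'\,v'$ onto the path $v\,b\,a\,v'$. The upshot is that some fold of the thread $v\,u\,u'\,v'$ onto the complementary $v$–$v'$ path $v\,b\,a\,v'$ of $C$ gives a homomorphism of $B$ onto a proper subgraph, contradicting that $B$ is a core.

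I expect the main obstacle to be bookkeeping the case analysis for where exactly the $\{2,2,2\}$-triple gets realized and making sure the fold onto the $6$-cycle is genuinely edge-preserving in all configurations (in particular handling $v=v'$, handling the case where the weight-$2$ neighbour of $u$ realizing $\{1,1,2\}$ coincides with $v'$, and confirming that after the fold the odd-girth is preserved — which it is, since the image is a subgraph of $B$). The conceptual content is light: it is the same "degree-$2$ vertices must sit on short even cycles, but short even cycles let you fold" mechanism as in Lemma~\ref{Degree2Vertex-cycle}, now applied to a length-$3$ thread instead of a single degree-$2$ vertex; the induced-subgraph minimality (rather than mere subgraph minimality) is what lets us conclude, because after deleting $u$ we may create new degree-$2$ vertices or lose edges, but we only need that $B$ is a core, and coreness is what a fold onto a proper subgraph contradicts.
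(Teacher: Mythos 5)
Your core mechanism --- force the weight-$2$ edges at $u$ and $u'$, realize $\{2,2,2\}$ to obtain a $4$- or $6$-cycle through both vertices, then fold the thread $v\,u\,u'\,v'$ onto the complementary path $v\,b\,a\,v'$ (i.e.\ $u\mapsto b$, $u'\mapsto a$) --- is exactly the mechanism behind the paper's proof, and that fold is correct. The genuine gap is at the very first step: you ``fix a $k$-partial distance graph $(\widetilde{B},d_B)$ containing all edges of $B$'' and later conclude by ``contradicting that $B$ is a core'', but neither of these is available when $B$ is only minimal with respect to \emph{induced} subgraph inclusion. The second part of Theorem~\ref{thm:TripleProperties} only guarantees a partial distance graph supported on some subgraph-minimal bound $B_m\subseteq B$; it is for bounds minimal under ordinary subgraph inclusion that one may assume all edges of the bound appear (this is what the proof of Lemma~\ref{Degree2Vertex-cycle} uses). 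An induced-minimal $B$ may carry superfluous edges, so its core can be a proper \emph{spanning} subgraph: $B$ need not be a core, and the edges $uv$, $uu'$, $u'v'$ need not lie in the guaranteed $\widetilde{B}$ with weight~$1$. So, as written, the triple-realization steps do not get started, and your closing remark that induced minimality ``only needs that $B$ is a core'' is exactly the implication that fails.

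The missing idea is the paper's opening reduction. Since $B$ is induced-minimal, any subgraph-minimal bound $B'\subseteq B$ must be spanning (otherwise $B[V(B')]$ would be a smaller induced bound of odd-girth $2k+1$), and $B'$ is a core, hence has minimum degree~$2$; consequently $u$ and $u'$ still have degree~$2$ in $B'$ with the same three incident edges. Now your argument (equivalently, Lemma~\ref{Degree2Vertex-cycle} applied to $B'$, with distances $d_{B'}$) legitimately produces the common $6$-cycle through $u$ and $u'$, and your fold maps the graph into the subgraph obtained by deleting $\{u,u'\}$ --- contradicting either the coreness of $B'$, as the paper does, or directly the induced minimality of $B$ (the image lies in the induced subgraph on $V(B)\setminus\{u,u'\}$, which would be a smaller bound of odd-girth $2k+1$); the latter is the statement you should invoke in place of ``$B$ is a core''. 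With that reduction inserted, your proof coincides with the paper's.
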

\begin{proof}
By the hypothesis, $B$ has a spanning subgraph $B'$ that is a minimal bound in terms of subgraph inclusion. Then $B'$ is a core and, therefore, has minimum degree at least~$2$. If two adjacent vertices of $B$ have degree~$2$, then they must have degree~$2$ in $B'$. By Lemma~\ref{Degree2Vertex-cycle}, they must be part of the same $6$-cycle. But then $B'$ is not a core, a contradiction.
\end{proof}

The following lemma requires an even stronger minimality condition than the one in Lemma~\ref{Adjacent2Vertices}.

\begin{lemma}\label{new_lemma_6-cycle}
Let $k$ be striclty greater than 1. If $B$ is a minimal bound (in
terms of induced subgraph inclusion) of odd-girth $2k+1$ for
$\SP{2k+1}$ that has no homomorphism to any smaller graph of
odd-girth~$2k+1$, then any $6$-cycle $C$ of $B$ can contain at most
two degree~$2$ vertices. If furthermore $C$ contains two such
vertices, then they must be at distance~$3$ in $C$ (and in $B$).
\end{lemma}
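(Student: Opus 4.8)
The plan is to argue by contradiction, using the all $k$-good triple property of a suitable partial distance graph together with the core/minimality hypotheses, exactly in the spirit of Lemmas~\ref{Degree2Vertex-cycle} and~\ref{Adjacent2Vertices}. Let $C=x_1x_2x_3x_4x_5x_6x_1$ be a $6$-cycle of $B$, and let $B'\subseteq B$ be a spanning minimal bound (in terms of subgraph inclusion) obtained from the minimality of $B$ under induced subgraph inclusion; then $B'$ is a core of odd-girth $2k+1$, has minimum degree at least $2$, and by Theorem~\ref{thm:TripleProperties} admits a $k$-partial distance graph $(\widetilde{B'},d_{B'})$ with the all $k$-good triple property containing every edge of $B'$. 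By Lemma~\ref{Adjacent2Vertices} the degree-$2$ vertices form an independent set, so on $C$ they are pairwise non-adjacent; the only ways three pairwise non-adjacent vertices can sit on a $6$-cycle is as $\{x_1,x_3,x_5\}$ (or $\{x_2,x_4,x_6\}$), and two non-adjacent vertices are either at distance $2$ or at distance $3$ in $C$. So I must rule out (a) three degree-$2$ vertices forming an alternating triple on $C$, and (b) two degree-$2$ vertices at distance $2$ on $C$.

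For case (a), suppose $x_1,x_3,x_5$ all have degree $2$ in $B'$. Since $x_2$ has exactly $x_1$ and $x_3$ as neighbours among the cycle and $x_1,x_3$ have degree $2$, the closed neighbourhoods of $x_1$ and $x_3$ inside $C$ already force that the only $(2k+1)$-cycles through the edge $x_1x_2$ must use $x_3$ (the same kind of local analysis as in Lemma~\ref{Degree2Vertex-cycle}, where the $\{1,1,2\}$ and $\{2,2,2\}$ triples on the relevant edges pin down a $6$-cycle). Running the argument of Lemma~\ref{Degree2Vertex-cycle} on $x_1$ we get that $x_1$ lies on a $6$-cycle and the distance-$2$ edge $x_2x_6$ (equivalently, the weight-$2$ edge on the ``other side'' of $x_1$) is present in $\widetilde{B'}$; realizing $\{2,2,2\}$ on it puts $x_1$ on a $4$- or $6$-cycle. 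Combining these constraints around all three of $x_1,x_3,x_5$, the resulting configuration either produces a $4$-cycle (so $B'$ is not a core) or produces a proper retraction of $C$ onto a shorter structure, namely a folding of the $6$-cycle that yields a homomorphism of $B$ to a strictly smaller graph of odd-girth $2k+1$ — contradicting the hypothesis that $B$ has no homomorphism to any smaller graph of odd-girth $2k+1$.

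For case (b), suppose $x_1$ and $x_3$ both have degree $2$ in $B'$ (so $x_2$ has degree $2$ in $B$ as well, being adjacent only to two degree-$2$ vertices — wait, $x_2$ need not have degree $2$; it has $x_1,x_3$ as two of its neighbours). Since $x_1$ and $x_3$ have degree $2$, vertex $x_2$'s two cycle-neighbours are both degree-$2$, and applying Lemma~\ref{Degree2Vertex-cycle} to $x_1$ forces the weight-$2$ edge $x_2x_6$ into $\widetilde{B'}$, while applying it to $x_3$ forces the weight-$2$ edge $x_2x_4$ in. Realizing $\{2,2,2\}$ on each of these, and using that $x_1,x_3$ have no neighbours outside $\{x_2\}\cup\{x_6\text{ or }x_4\}$, the only consistent possibility is that the paths $x_6x_1x_2$ and $x_2x_3x_4$ fold together; concretely, one shows $B'$ admits a homomorphism identifying $x_1$ with $x_3$ (and $x_6$ with $x_4$), which is a proper retraction, contradicting that $B'$ is a core. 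Thus two degree-$2$ vertices on $C$ must be at distance $3$.

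The main obstacle will be the case analysis in part (a): correctly enumerating, from the all $k$-good triple property applied to the weight-$1$ and weight-$2$ edges around $x_1,x_3,x_5$, exactly which additional vertices and weighted edges are forced, and then verifying that every surviving configuration yields either a $4$-cycle or an explicit folding homomorphism to a smaller odd-girth-$(2k+1)$ graph. One must be careful that the forced ``$6$-cycle'' through a degree-$2$ vertex might a priori be a different $6$-cycle than $C$, so the argument needs to track that the degree-$2$ constraints leave no room for this and that the foldings are genuinely proper (i.e. not automorphisms), which is where the ``no homomorphism to a smaller graph'' hypothesis — strictly stronger than being a core — is essential.
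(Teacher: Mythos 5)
Your plan has a genuine gap at its central step, case (b). After forcing the weight-$2$ edges $x_2x_4$ and $x_2x_6$ into $(\widetilde{B'},d_{B'})$, you assert that ``the only consistent possibility'' is a folding identifying $x_1$ with $x_3$ and $x_6$ with $x_4$, and that this contradicts $B'$ being a core. Neither part is justified: collapsing $x_1$ onto $x_3$ is a homomorphism into a proper subgraph of $B'$ only if $x_3$ is adjacent to $x_6$, and if you also collapse $x_6$ onto $x_4$ you need every neighbour of $x_6$ other than $x_1$ to be adjacent to $x_4$ --- but $x_4$ and $x_6$ have degree at least $3$ with uncontrolled neighbourhoods, so nothing in the forced configuration gives this. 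Realizing $\{2,2,2\}$ on $x_2x_4$ and $x_2x_6$ only produces some vertices at distance $2$ from each endpoint, which (as you note yourself) may lie on $4$- or $6$-cycles other than $C$; the case analysis you defer in part (a) is exactly the missing content, and it is not clear it closes (also, a $4$-cycle by itself does not contradict being a core). Finally, case (a) is redundant: three pairwise non-adjacent vertices on a $6$-cycle contain a pair at distance $2$, so ruling out (b) already proves the whole lemma.

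The paper's proof is much shorter, and the idea you are missing is the following. Write $C:uvwxyz$ with $u$ of degree $2$, and suppose $w$ (at distance $2$ from $u$ on $C$) also has degree $2$. If $u$ and $w$ lay on no common $(2k+1)$-cycle, then identifying $u$ and $w$ would give a strictly smaller graph of odd-girth $2k+1$ to which $B$ maps (an odd closed walk of length less than $2k+1$ through the identified vertex would lift to an odd $u$--$w$ path of length at most $2k-1$, which together with the path $uvw$ yields an odd closed walk of length at most $2k+1$, hence a $(2k+1)$-cycle through $u$ and $w$); this contradicts the hypothesis that $B$ has no homomorphism to any smaller graph of odd-girth $2k+1$ --- note this is an identification/quotient argument, not a retraction inside $B'$, so the core property is not the right tool here. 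Hence $u$ and $w$ lie on a common $(2k+1)$-cycle; since both have degree $2$, that cycle must use all four edges $zu$, $uv$, $vw$, $wx$ of $C$, and replacing this path of length $4$ by the path $x,y,z$ of length $2$ produces a closed odd walk of length $2k-1$, contradicting the odd-girth of $B$. This single parity count is the step your sketch never reaches.
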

\begin{proof}
Let $u$ be a degree~$2$ vertex of $B$ belonging to a $6$-cycle $C:uvwxyz$. By 
Lemma~\ref{Adjacent2Vertices}, $v$ and $z$ must have degree at least~$3$. Assume for a contradiction that $y$ or $w$ (say $w$) has degree~$2$. Then, $u$ and $w$ must belong to a common $(2k+1)$-cycle (otherwise, identifying $u$ and $w$ would give a homomorphism to a smaller graph of odd-girth~$2k+1$). Moreover, since $deg(u)=deg(w)=2$, this $(2k+1)$-cycle uses four edges of $C$. Thus, replacing them with the two other edges of $C$ yields a $(2k-1)$-cycle, a contradiction.
\end{proof}

\section{A polynomial-time algorithm to check whether a given graph of odd-girth $2k+1$ bounds 
$\SP{2k+1}$}\label{sec:algo}

In this section, we show that the characterization of Theorem~\ref{thm:TripleProperties} is sufficiently strong to imply the existence of a polynomial-time algorithm that checks whether a given graph of odd-girth $2k+1$ bounds $\SP{2k+1}$. We describe this algorithm as Algorithm~\ref{algo}.

\begin{algorithm}{Deciding whether a graph of odd-girth $2k+1$ bounds $\SP{2k+1}$.}
\begin{algorithmic}[1]
  \Require{An integer $k$, a graph $B$.}
  \Statex
  \State Compute the odd-girth $g$ of $B$.\label{alg:OG}
  \If{$g\neq 2k+1$}
  \State\Return NO\Comment{\emph{($B$ is not a bound)}}\label{alg:OG-bis}
  \EndIf
  \State Compute the distance function $d_B$ of $B$.\label{alg:distances}
  \State Let $(\widetilde{B},d_B)$ be the $k$-partial distance graph of $B$ obtained from the 
complete distance graph of $B$ by removing all edges of weight more than $k$.
  \State Compute the set $\mathcal T_k$ of $k$-good triples.
  \For{$e=xy$ in $E(\widetilde{B})$ with $d_B(xy)\leq k$}\label{alg:loop}
  \Let{$p$}{$d_B(xy)$}
  \For{each $k$-good triple $t=\{p,q,r\}\in\mathcal T_k$ containing $p$}
  \If{there is no pair $z,z'$ of $V(B)$ with $d_B(xz)=d_B(yz')=q$ and 
$d_B(yz)=d_B(xz')=r$}\label{alg:check_edge}
  \Statex\Comment{\emph{(the edge $e$ fails for the all $k$-good triple property)}}
  \If{$d_B(xy)\geq 2$}
  \Let{$\widetilde{B}$}{$(\widetilde{B}-uv)$}\label{alg:update}
  \State Restart the loop (Step~\ref{alg:loop}).
  \Else\Comment{\emph{($uv$ is an edge of $B$)}}
  \State\Return Algorithm~\ref{algo}($k$,$B-uv$).\Comment{\emph{(Recursive call with a smaller 
graph.)}} \label{alg:restart}
  \EndIf  
  \EndIf
  \EndFor
  \EndFor
  \State\Return YES\Comment{\emph{($\widetilde{B}$ is a certificate)}}
\end{algorithmic}
\label{algo}  
\end{algorithm}

We now analyze Algorithm~\ref{algo}.

\begin{theorem}
Algorithm~\ref{algo} checks in time $O(mn^5k^3)=O(mn^8)$ whether a given graph $B$ of odd-girth $2k+1$ with $n$ vertices and $m$ edges bounds $\SP{2k+1}$.
\end{theorem}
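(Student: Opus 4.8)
The plan is to establish the time bound by going through Algorithm~\ref{algo} line by line, bounding the cost of each step, and then bounding the number of iterations of the main loop caused by restarts and recursive calls.

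First I would handle the preprocessing. Computing the odd-girth $g$ of $B$ (Step~\ref{alg:OG}) and the all-pairs distance function $d_B$ (Step~\ref{alg:distances}) can both be done in time $O(mn)$ by running a BFS from each vertex (the odd-girth can be read off from the BFS layers, detecting the shortest non-bipartite obstruction). Building the $k$-partial distance graph $(\widetilde B,d_B)$ from these distances costs $O(n^2)$, and enumerating the set $\mathcal T_k$ of $k$-good triples costs $O(k^3)$ by checking the easy criterion of Proposition~\ref{prop:good_triple} for each candidate $\{p,q,r\}$ with $1\le p,q,r\le k$; there are $O(k^3)$ triples and each check is $O(1)$. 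All of this is dominated by the main loop.

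Next I would analyze one pass through the main loop (Step~\ref{alg:loop}). For each of the $O(n^2)$ edges $xy$ of $\widetilde B$, we iterate over the $O(k^2)$ $k$-good triples $\{p,q,r\}$ containing $p=d_B(x,y)$. For each such triple, the test at Step~\ref{alg:check_edge} asks whether there is a vertex $z$ with $d_B(x,z)=q$ and $d_B(y,z)=r$ (and a vertex $z'$ with the roles of $q,r$ swapped); this is checked by scanning all $n$ vertices of $B$, costing $O(n)$ time. Hence one full pass through the loop costs $O(n^2\cdot k^2\cdot n)=O(n^3k^2)$. The subtle point — and the main obstacle — is to bound the total number of passes. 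Each time the test fails on an edge of weight $\ge 2$, the algorithm deletes that edge from $\widetilde B$ (Step~\ref{alg:update}) and restarts the loop; since $\widetilde B$ has at most $O(n^2)$ edges and an edge is never re-added, there are at most $O(n^2)$ such restarts. Each time the test fails on an edge of weight $1$, the algorithm recurses on $B$ with one fewer edge (Step~\ref{alg:restart}); since $B$ has $m$ edges, there are at most $m$ such recursive calls, and between consecutive recursive calls the weight-$\ge 2$ restarts again number at most $O(n^2)$. Multiplying, the loop is executed $O(mn^2)$ times in total, for a grand total of $O(mn^2)\cdot O(n^3k^2)=O(mn^5k^3)$ — here I absorb one extra factor of $k$ into the crude bound by noting the number of triples containing a fixed $p$ is $O(k^2)$ and re-examining the recursion overhead; and since $k=O(n)$ (an odd $(2k+1)$-cycle uses $2k+1\le n$ vertices), this is $O(mn^8)$.

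Finally I would argue correctness, which is needed to justify that the algorithm returns the right answer at all: by Theorem~\ref{thm:TripleProperties}, $B$ bounds $\SP{2k+1}$ (when $B$ has odd-girth $2k+1$) if and only if some $k$-partial distance graph of $B$ has the all $k$-good triple property, and, by the minimality argument in that theorem, it suffices to look at subgraphs of $B$ obtained by deleting weight-$1$ edges together with the canonical distance graph. The loop maintains the invariant that every edge already examined satisfies the realization requirement, edges of weight $\ge 2$ that fail are simply not part of any valid certificate and may be discarded (this is exactly the content of the first part of Theorem~\ref{thm:TripleProperties} applied to the remaining weighted graph), and edges of weight $1$ that fail cannot be used, forcing the recursion on $B-uv$; if the loop terminates without failure, the current $(\widetilde B,d_B)$ witnesses the all $k$-good triple property and YES is correct. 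Termination is guaranteed because each failure strictly decreases either $|E(\widetilde B)|$ or $|E(B)|$. I expect the bookkeeping of the restart/recursion count to be the only genuinely delicate part; the per-pass cost and the preprocessing are routine.
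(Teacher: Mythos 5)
Your running-time accounting is essentially the paper's and is fine: $O(m)$ recursive calls (one per edge deleted from $B$), $O(n^2)$ restarts of the loop per call, $O(n^2)$ weighted pairs per pass, $O(k^2)$--$O(k^3)$ good triples per pair with an $O(n)$ scan per realization test, and $k=O(n)$ because $B$ must contain a $(2k+1)$-cycle; whether the preprocessing is done by BFS or Dijkstra is immaterial. The genuine gap is in the completeness half of correctness, i.e.\ in showing that the algorithm answers YES whenever $B$ is a bound. The only way the algorithm can answer NO is at Step~\ref{alg:OG-bis} of a recursive call, after some weight-$1$ edges have been deleted from $B$ at Step~\ref{alg:restart}; so what has to be proved is that these deletions can never destroy all $(2k+1)$-cycles (or otherwise change the odd-girth) when $B$ is a bound. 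Your justification --- a failing weight-$1$ edge ``cannot be used, forcing the recursion on $B-uv$'' --- only says that no certificate of the \emph{current} graph contains that pair; it does not say that $B-uv$ still bounds $\SP{2k+1}$, nor that it still has odd-girth $2k+1$, which is exactly what the recursive call needs in order to return the right answer. Note also that after a recursive call the distance function is recomputed, so certificates of $B$ and of $B-uv$ are different weighted objects, and your loop invariant does not connect them across levels of the recursion.

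The paper closes this gap by fixing, once and for all, a minimal bounding subgraph $B'\subseteq B$ together with the certificate $(\widetilde{B'},d_{B'})$ supplied by the second part of Theorem~\ref{thm:TripleProperties}, and proving that no edge of $\widetilde{B'}$ ever fails the test of Step~\ref{alg:check_edge} at any point of the execution: by Observation~\ref{obs:all-k-good-implies-every-edge-in-cycle} every such pair lies on a common $(2k+1)$-cycle of $B'$, and by Observation~\ref{obs:DistanceDetmByCycle} its distance in the current (larger) graph equals its distance in $B'$, so its weight and its realization witnesses are unchanged. Consequently the weight-$1$ edges of $B'$ are never deleted, every recursive input contains $B'$ and hence has odd-girth exactly $2k+1$, the partial distance graph never becomes empty, and the algorithm must terminate with YES. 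Some argument of this kind (or another proof that a failing edge of $B$ never lies in a minimal bounding subgraph) is indispensable; without it the recursion step is unjustified. A smaller inaccuracy: discarding a failing weight-$\geq 2$ pair is legitimate because the test at Step~\ref{alg:check_edge} depends only on $d_B$ and the pair (so such a pair lies in no certificate of the current graph), not because of ``the first part of Theorem~\ref{thm:TripleProperties}'', which goes in the opposite direction (certificate $\Rightarrow$ bound).
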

\begin{proof}
We prove that Algorithm~\ref{algo} is correct, that is, it returns ``YES'' if and only if $B$ is a bound for $\SP{2k+1}$. 

Assume first that Algorithm~\ref{algo} returns ``YES''. Then, it has found a $k$-partial distance graph of some subgraph $B'$ of $B$ and a $k$-partial distance graph $(\widetilde{B'},d_{B'})$ of $B$ such that each edge of $\widetilde{B'}$ has passed the check of Step~\ref{alg:check_edge}. Therefore, $(\widetilde{B'},d_{B'})$ has the all $k$-good triple property and by Theorem~\ref{thm:TripleProperties}, $B'$ is a bound --- and so is $B$.

Assume now that $B$ is a bound for $\SP{2k+1}$. Then, it contains a minimal subgraph $B'$ that is also a bound. By Theorem~\ref{thm:TripleProperties}, there is a $k$-partial distance graph $(\widetilde{B'},d_{B'})$ of $B'$ having the all $k$-good triple property.

We will now show that no edge $xy$ of $\widetilde{B'}$ will ever be deleted by the algorithm (that is, $xy$ will always succeed the check of Step~\ref{alg:check_edge}). By contradiction, assume that $xy$ is the first edge of $\widetilde{B'}$ that does not succeed the check of Step~\ref{alg:check_edge}. By Observation~\ref{obs:all-k-good-implies-every-edge-in-cycle}, for each edge $uv$ of $\widetilde{B'}$, there is a $(2k+1)$-cycle $C$ of $B'$ (and hence $B$) containing both $x$ and $y$ (recall that by definition $d_{B'}(uv)\leq k$). By Observation~\ref{obs:DistanceDetmByCycle}, $d_{B'}(xy)=d_B(xy)\leq k$. By our assumption on $xy$ being the first edge to be deleted, all edges of $\widetilde{B'}$ are still present in $\widetilde{B}$ at this step of the execution. Furthermore, the distances in the copy of $B'$ in $B$ are the same as the distances in $B'$ and hence the edges of $\widetilde{B'}$ in $\widetilde{B}$ have the same weights as in $\widetilde{B'}$. Hence, $xy$ cannot fail the check of Step~\ref{alg:check_edge}, which is a contradiction. 

By the previous paragraph, Algorithm~\ref{algo} will never delete any edge of $\widetilde{B'}$ from $\widetilde{B}$. Therefore, even in a possible recursive call at Step~\ref{alg:restart}, the input graph will always have odd-girth $2k+1$ and the $k$-partial distance graph will never become empty. Thus, Algorithm~\ref{algo} will never return ``NO''. Therefore, at some step, there will be a $k$-partial distance graph (of some subgraph of $B$) having the all $k$-good triple property, and Algorithm~\ref{algo} will return ``YES''. Thus, Algorithm~\ref{algo} is correct.

Now, for the running time, note that there are $O(m)$ recursive calls to the algorithm (Step~\ref{alg:restart}) since each call corresponds to the deletion of one edge of $B$. For each call, we have the computation of the odd-girth of $B$ at Step~\ref{alg:OG}, which can be done in time $O(n(m+n\log n))=O(n^3)$ (see for example the literature review in~\cite{LP05}). Algorithm~\ref{algo} computes all distances at Step~\ref{alg:distances} and then creates $\widetilde{B}$, which can be done in $O(n^3)$ steps using Dijkstra's algorithm. Then, the loop of Step~\ref{alg:loop} is over $O(n^2)$ pairs, and for each pair, we have $O(k^3)$ $k$-good triples to check; each check needs to go through $O(n)$ vertices of $B$. Hence, one iteration of the loop takes $O(nk^3)$ steps. However, the loop may be restarted $O(n^2)$ times at Step~\ref{alg:update} (at most once for each of the $O(n^2)$ edges of $\widetilde{B}$), hence there may be $O(n^4)$ total iterations of the loop. Therefore, each recursive call to the algorithm may take $O(n^5k^3)$ time, and the total running time is $O(mn^5k^3)$. This is also $O(mn^8)$, indeed $k=O(n)$ because $B$ must contain a $(2k+1)$-cycle (to be more precise, if this is not the case, the algorithm stops at Step~\ref{alg:OG-bis}).
\end{proof}

Given an input graph $B$ of odd-girth $2k+1$, when our algorithm returns ``NO'', it would be interesting to produce, as an explicit NO-certificate, a $K_4$-minor-free graph of odd-girth $2k+1$ which does not admit a homomorphism to $B$. Algorithm~\ref{algo} could provide such a certificate, the rough ideas are as follows. Since $B$ is a NO-instance, Algorithm~\ref{algo}, after deleting a sequence $e_1, e_2, \ldots, e_m$ of weighted edges, has considered a graph $B_m$ which is either bipartite or has odd-girth larger than $2k+1$. Thus, $C_{2k+1}$ is a member of $\SP{2k+1}$ which does not admit a homomorphism to $B_m$. Let $\widehat{C_{2k+1}}$ be a $2$-tree containing $C_{2k+1}$ as a spanning subgraph and let $G_m=(\widehat{C_{2k+1}}, d_{C_{2k+1}})$ be the corresponding $k$-partial distance graph of $C_{2k+1}$. Starting from this, and in reverse order from $i=m-1$ to $i=1$, we build a weighted graph $G_i$ from $G_{i+1}$ as follows. Assume that in Algorithm~\ref{algo}, the edge $e_i$ of weight $p$ has been deleted because of no realized $k$-good triple $\{p,q,r\}$ on $e_i$. Let $(\widehat{T},d_T)$ be a weighted $2$-tree completion of the graph $T_{2k+1}(p,q,r)$ where $\widehat{T}$ includes the edge $uv$ of weight $p$ of $T_{2k+1}(p,q,r)$. Then, on each edge $xy$ of weight $p$ of the weighted graph $G_{i+1}$, glue two disjoint copies of $(\widehat{T},d_T)$ (one copy identifing $u$ with $x$ and $v$ with $y$, and one copy identifying $u$ with $y$ and $v$ with $x$). At the final step, we obtain a weighted graph $G_1$ whose subgraph induced by the edges of weight~$1$ does not admit a homomorphism to $B$. Note however that the order of this graph could be super-polynomial in $|V(B)|$; we do not know if one can create such a NO-certificate of order polynomial in $|V(B)|$.

We conclude this section by asking for a similar (not necessarily polynomial-time) algorithm as Algorithm~\ref{algo} for the case of planar graphs. Let $\mathcal P_{2k+1}$ be the class of planar graphs of odd-girth at least $2k+1$.

\begin{problem}\label{pb:planar}
For a fixed $k$, give an explicit algorithm which decides whether an input graph of odd-girth $2k+1$ bounds $\mathcal P_{2k+1}$. 
\end{problem}

For $k=1$ and by the virtue of the Four-Colour Theorem, one only needs to check whether $B$ contains $K_4$ as a subgraph. We observe that the existence of an algorithm that does not use the Four-Colour Theorem is related to Problem~2.1 of the classic book on graph colouring by Jensen and Toft~\cite{JT95}.

For other values of $k$, we do not know of any explicit algorithm. Note however, that a hypothetic algorithm exists: for a given graph $B$ of odd-girth~$2k+1$ not bounding $\mathcal P_{2k+1}$, let $f_k(B)$ be the smallest order of a graph in $\mathcal P_{2k+1}$ with no homomorphism to $B$, and let $f_k(n)$ be the maximum of $f_k$ over all such graphs of order~$n$. Then, given $B$ with odd-girth~$2k+1$, one may simply check, for all graphs in $\mathcal P_{2k+1}$ of order at most~$f_k(|V(B)|)$, whether it maps to $B$. Since $f_k$ is well-defined, this is a finite-time algorithm, but it relies on the knowledge of $f_k$. Note that part of Problem~2.1 of~\cite{JT95} consists in giving an upper bound on $f_1(K_4)$ without using the Four-Colour Theorem, which is already a difficult problem.

\section{General families of bounds}\label{sec:general}

In this section, we exhibit three bounds of odd-girth $2k+1$ for the class $\SP{2k+1}$.

\subsection{Projective hypercubes}

Projective hypercubes are well-known examples of very symmetric graphs. Property~$(ii)$ of the following lemma is to claim that projective hypercubes are \emph{distance-transitive}, which is a well-known fact. This is needed for a proof of Property~$(iii)$, hence we include a proof for the sake of completeness. 

\begin{lemma}\label{propertiesofPC}
The projective hypercube $PC(2k)$ has the following properties.
\begin{itemize}
\item[(i)] For each pair $x,y$ of vertices of $PC(2k)$, $d(x,y)\leq k$; furthermore, $x$ and $y$ belong to at least one $(2k+1)$-cycle.
 
\item[(ii)] If $d(u,v)=d(x,y)$ for some vertices $u,v,x,y$ of $PC(2k)$, then there is an automorphism of $PC(2k)$ which maps $u$ to $x$ and $v$ to $y$ (in other words, $PC(2k)$ is distance-transitive).

 \item[(iii)] Let $\{p,q,r\}$ be a $k$-good triple with $1\leq p,q,r \leq k$. Suppose $x$ and $y$ are two vertices of $PC(2k)$ at distance $p$. Let $\phi(u)=x$ and $\phi(v)=y$ be a mapping of two (main) vertices of $T_{2k+1}(p,q,r)$. Then, $\phi$ can be extended to a homomorphism of $T_{2k+1}(p,q,r)$ to $PC(2k)$.
\end{itemize}

\end{lemma}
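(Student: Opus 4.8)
The plan is to establish the three properties in order, using the Cayley-graph description $PC(2k) = (\mathbb{Z}_2^{2k}, S)$ with $S = \{e_1,\ldots,e_{2k},J\}$. For (i), I would argue that the distance between two vertices $x,y$ equals the minimum, over representations of $x+y$ as a sum of elements of $S$, of the number of terms used. Since $x+y \in \mathbb{Z}_2^{2k}$ has some Hamming weight $w$ with $0 \le w \le 2k$, one can reach it using $w$ of the basis vectors $e_i$, or alternatively using $J$ together with $2k - w$ basis vectors (because $J + (x+y)$ has weight $2k-w$); hence $d(x,y) \le \min\{w, 2k-w+1\} \le k$. For the $(2k+1)$-cycle claim: if $d(x,y) = p \le k$, take a shortest path of length $p$ from $x$ to $y$; the element $x+y$ is a sum of $p$ generators, and $J$ plus the remaining $2k-p$ basis vectors (those not used, in the case $x+y$ is a sum of $e_i$'s) or a similar complementary expression gives a second $x$–$y$ walk of length $2k+1-p$, disjoint from the first in its edge-generators. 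Together these form a closed walk of length $2k+1$; since $PC(2k)$ has odd-girth $2k+1$ (a standard fact I would cite or quickly verify from the Cayley structure), this walk is in fact a $(2k+1)$-cycle through $x$ and $y$. One must take a small amount of care to ensure the two paths share only the endpoints, which follows because the partial sums along the two paths are distinct.

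For (ii), distance-transitivity: given $d(u,v) = d(x,y)$, first translate by $u$ (an automorphism of any Cayley graph) so that $u \mapsto 0$ and $v \mapsto u+v$; similarly translate so $x \mapsto 0$, $y \mapsto x+y$. It then suffices to find an automorphism fixing $0$ that sends $u+v$ to $x+y$. The automorphism group of $PC(2k)$ fixing $0$ contains, at least, the coordinate permutations of $\mathbb{Z}_2^{2k}$ (which permute the $e_i$ and fix $J$) together with the map "complement a coordinate pattern" realized by adding $J$ after permuting — more precisely the relevant group acts on the set $S$ of generators and extends linearly. Using these, one shows that any two elements of $\mathbb{Z}_2^{2k}$ having the same distance-to-$0$ (equivalently, the same value of $\min\{\text{wt}, 2k+1-\text{wt}\}$, so either equal weight or complementary weights) lie in one orbit: coordinate permutations handle equal weight, and the $J$-translation composed with a permutation handles the complementary-weight case. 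This is the routine part.

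Property (iii) is the heart of the matter and the step I expect to be the main obstacle. The graph $T_{2k+1}(p,q,r)$ consists of three "hub" vertices $u,v,w$ joined pairwise by threads; a homomorphism to $PC(2k)$ is the same as choosing images $x = \phi(u)$, $y = \phi(v)$, $z = \phi(w)$ and then, for each thread of length $\ell$ between two hubs whose images are at distance $d$, extending along the thread — which is possible exactly when $\ell \ge d$ and $\ell \equiv d \pmod 2$ (walk back and forth using a single generator to pad). So the task reduces to: given $x,y$ at distance $p$ and a $k$-good triple $\{p,q,r\}$, find a vertex $z$ with $d(x,z) = q$ and $d(y,z) = r$ (the parity conditions on the four threads of lengths $q, 2k+1-q, r, 2k+1-r$ will then all hold automatically, using Proposition~\ref{prop:good_triple} and the fact that $2k+1-q$ has the right parity since $2k+1$ is odd). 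By (ii) I may assume $x = 0$ and $y$ is any fixed vector of weight $p$, say $y = e_1 + \cdots + e_p$. I then need to exhibit $z \in \mathbb{Z}_2^{2k}$ with $\min\{\text{wt}(z), 2k+1-\text{wt}(z)\} = q$ and $\min\{\text{wt}(z + y), 2k+1-\text{wt}(z+y)\} = r$. I would construct $z$ explicitly by choosing how many of its $1$-coordinates fall inside the support of $y$ versus outside, and whether to "use $J$" on each side; the $k$-good triple conditions from Proposition~\ref{prop:good_triple} — triangular inequalities in the even case, sum $\ge 2k+1$ in the odd case — are precisely what guarantee such a choice of overlap parameters exists within the valid ranges $0 \le (\text{coordinates inside}) \le p$ and $0 \le (\text{coordinates outside}) \le 2k-p$. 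The bookkeeping of these cases (odd vs.\ even $p+q+r$, and which of $p,q,r$ attains the maximum) is where the real work lies, but each case is a short arithmetic verification once the parametrization is set up.
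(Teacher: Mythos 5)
The step that breaks is your justification of the extension along the threads in (iii). You claim a thread of length $\ell$ joining two hubs whose images are at distance $d$ can be mapped ``exactly when $\ell\ge d$ and $\ell\equiv d\pmod 2$,'' and that the parity conditions ``hold automatically since $2k+1$ is odd.'' They do not: for every pair of hubs, one of the two connecting threads has length $2k+1-d$, which has the \emph{opposite} parity to $d$ (this includes the $u$--$v$ threads of lengths $p$ and $2k+1-p$, not only the four incident to $w$). So by your own criterion the extension would be impossible; the criterion is also not an ``exactly when'' in a non-bipartite target. What actually makes the extension work is part (i): the two images lie on a common $(2k+1)$-cycle, whose two arcs have lengths $d$ and $2k+1-d$ (Observation~\ref{obs:DistanceDetmByCycle}), and the two threads are mapped onto these two arcs. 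This is exactly how the paper's proof performs the extension, and your reduction of (iii) to ``find $z$ with $d(x,z)=q$ and $d(y,z)=r$'' (which is the correct target, forced by Observation~\ref{obs:HomDistanceDetmByCycle}) is unsupported until you make this replacement. A smaller wobble of the same kind occurs in (ii): translation by $J$ does not fix $0$ and does not realize the complementary-weight case; what you need, and what the paper extracts from the $\mathbb{Z}_2^{2k+1}$ representation, is that \emph{every} permutation of the $2k+1$ generators of $S$, including those exchanging $J$ with some $e_i$, extends to a linear automorphism, which holds because the only linear relation among the generators is that their total sum is zero.

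The core existence of $z$, which you leave as a parametrized sketch, does go through and is essentially the paper's allocation argument in different clothing. With $x=0$ and $y=e_1+\cdots+e_p$ (legitimate by (ii)), let $i$ and $j$ be the numbers of $1$'s of $z$ inside and outside the support of $y$, so that you need $i+j\in\{q,\,2k+1-q\}$ and $(p-i)+j\in\{r,\,2k+1-r\}$ with $0\le i\le p$ and $0\le j\le 2k-p$; note $(i+j)\pm\bigl((p-i)+j\bigr)\equiv p\pmod 2$. When $p+q+r$ is even, take $i+j=q$ and $(p-i)+j=r$: feasibility is exactly the triangular inequalities of Proposition~\ref{prop:good_triple}(ii). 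When $p+q+r$ is odd, take $i+j=2k+1-q$ and $(p-i)+j=r$: the constraint $i\le p$ is exactly $p+q+r\ge 2k+1$, and the remaining bounds follow from $p,q,r\le k$. The paper instead chooses the shortest odd cycle among the four through $u,v,w$ and partitions the generator set $S$ accordingly, then finishes by distance-transitivity; your inside/outside-support bookkeeping is an equivalent, slightly more computational route, but as submitted it is a plan rather than a proof, and the thread-extension error above must be repaired for (iii) to stand.
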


\begin{proof}
We use the Cayley representation of the projective hypercube.

\medskip

\noindent (i). Let $D$ be the set of coordinates at which $x$ and $y$ differ, and $\overline{D}$ be the complement of $D$, that is, the set of coordinates at which $x$ and $y$ do not differ. Then $x=y+\displaystyle \sum_{i\in D} e_i=y+J+\sum_{i \in {\overline{D}}}e_i$. Let $P_1$ be the path connecting $x$ and $y$ by adding elements of $D$ to $x$ in a consecutive way. Similarly, let $P_2$ be the path connecting $x$ and $y$ by adding elements of ${\overline D}\cup \{J\}$. The smaller of $P_1$ and $P_2$ provides the distance between $x$ and $y$ and the union of the two is an example of a $(2k+1)$-cycle containing both $x$ and $y$.

\medskip

\noindent(ii). We need a more symmetric representation of $PC(2k)$. Let $S'$ be the set of $2k+1$ elements of $\mathbb{Z}_2^{2k+1}$, each with exactly two $1$'s which are consecutive in the cyclic order. Then, the Cayley graph $(\mathbb{Z}_2^{2k+1},S')$ has two connected components, each isomorphic to $PC(2k)$. It is now easy to observe that any permutation of $S'$ (equivalently, any permutation of $S$ in the original form) induces an automorphism of $PC(2k)$. Thus, to map $uv$ to $xy$, we could first map $u$ to $x$ by the automorphism $\phi(t)=t+x-u$. Then, composing $\phi$ with a permutation of $S'$ which maps $\phi(v)$ to $y$ will induce an automorphism that maps $u$ to $x$ and $v$ to $y$.

\medskip

\noindent (iii). To prove the third claim, we first give a homomorphism of $T_{2k+1}(p,q,r)$ to $PC(2k)$ (not necessarily satisfying $u\mapsto x$ and $v\mapsto y$). Since both these graphs are of odd-girth $2k+1$, by Observation~\ref{obs:HomDistanceDetmByCycle}, such a homomorphism must preserve the distances between the three vertices $u$, $v$ and $w$ of $T_{2k+1}(p,q,r)$. The proof will then be completed using the distance-transitivity of $PC(2k)$ (proved in Part~$(ii)$).

Each choice of $a\in \{p, 2k+1-p\}, b\in \{q, 2k+1-q\}$, $c\in \{r, 2k+1-r\}$ corresponds to a cycle of length $a+b+c$ of $T_{2k+1}(p,q,r)$ (which goes through all of $u$, $v$ and $w$), exactly four of which are odd cycles. We consider $a$, $b$ and $c$ such that the corresponding cycle is the shortest among these four odd cycles. Without loss of generality, we may assume that $a \leq b \leq c$.

Since $a+b+c$ is odd and $2k+1\leq a+b+c\leq 3k+1$, there is an integer $t$ such that
$a+b+c=2k+1+2t$ (thus $ 0 \leq t \leq k-1$). We first note that $b+c\leq 2k+1$, as otherwise $a+(2k+1-b)+(2k+1-c)< a+b+c$, contradicting the choice of $a,b,c$. Thus we have $2t\leq a$.

Let $S_1,S_2,S_3$ be a partition of $S=\{e_1,\ldots, e_{2k},J\}$ with $|S_1|=a$, $|S_2|=b-t$ and $|S_3|=c-t$. Let $S_1'$ be a subset of size $t$ of $S_1$. Let $S_2^+=S_2\cup S_1'$ and $S_3^+=S_3\cup S_1'$. Let $\nu_1= \vec 0$ (the $0$ vector in $\mathbb Z_2^{2k}$), $\displaystyle \nu_2=\sum_{\nu\in S_1} \nu$, and $\displaystyle \nu_3=\sum_{\nu\in S_2^+} \nu=\nu_2+\sum_{\nu\in S_3^+}\nu$. It is now easy to check that $d(\nu_1,\nu_2)\in \{a, 2k+1-a\}$, $d(\nu_1,\nu_3) \in \{b, 2k+1-b\}$ and $d(\nu_2,\nu_3)\in \{c, 2k+1-c\}$. Furthermore by Part~$(i)$, each pair of vertices of $PC(2k)$ is in a $(2k+1)$-cycle. Thus, the mapping $u\mapsto \nu_1$, $v\mapsto \nu_2$ and $w\mapsto \nu_3$ can be extended to a homomorphism of $T_{2k+1}(p,q,r)$ to $PC(2k)$.
\end{proof}

\begin{theorem}\label{BoundedByPC(2k)}
The projective hypercube $PC(2k)$ bounds $\SP{2k+1}$.
\end{theorem}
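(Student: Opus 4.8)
The plan is to apply the first (sufficiency) direction of Theorem~\ref{thm:TripleProperties} directly to $B = PC(2k)$. By Part~(i) of Lemma~\ref{propertiesofPC}, the projective hypercube $PC(2k)$ has odd-girth exactly $2k+1$ (every pair of vertices lies on a $(2k+1)$-cycle, so there are odd cycles of that length; and one checks there are none shorter, e.g.\ because $H(2k+1)$ is bipartite and identifying antipodes creates no odd cycle shorter than $2k+1$). Moreover, Part~(i) says all pairwise distances are at most $k$, so the $k$-partial distance graph $(\widetilde{B}, d_B)$ to consider is simply the one obtained from the complete distance graph of $PC(2k)$ by keeping \emph{all} edges (none has weight exceeding $k$). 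It thus suffices to verify that this $(\widetilde{B}, d_B)$ has the all $k$-good triple property.

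First I would note $E(\widetilde{B}) \neq \emptyset$, which is immediate. Then, for each edge $xy$ of $\widetilde{B}$, write $p = d_B(x,y) \leq k$, and take any $k$-good triple $\{p,q,r\}$ with $1 \leq q, r \leq k$. I must produce two vertices $z_1, z_2$ with $d_B(x,z_1) = q$, $d_B(y,z_1) = r$ and $d_B(x,z_2) = r$, $d_B(y,z_2) = q$. This is exactly what Part~(iii) of Lemma~\ref{propertiesofPC} delivers: sending the two main vertices $u, v$ of $T_{2k+1}(p,q,r)$ to $x, y$ respectively extends to a homomorphism $\phi$ of $T_{2k+1}(p,q,r)$ to $PC(2k)$. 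Setting $z_1 = \phi(w)$, where $w$ is the third main vertex, and using Observation~\ref{obs:HomDistanceDetmByCycle} (since $u, v, w$ lie on a common $(2k+1)$-cycle of $T_{2k+1}(p,q,r)$ and both graphs have odd-girth $2k+1$), we get $d_B(x, z_1) = d_{T}(u,w) = q$ and $d_B(y, z_1) = d_{T}(v,w) = r$. For $z_2$, apply the same argument to the triple $\{p, r, q\}$ (which is $k$-good since $k$-goodness of a triple is symmetric in its entries), obtaining a vertex with the roles of $q$ and $r$ swapped. Hence $\{p,q,r\}$ is realized on $xy$, and since $xy$ and the triple were arbitrary, $(\widetilde{B}, d_B)$ has the all $k$-good triple property.

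With both hypotheses of the first part of Theorem~\ref{thm:TripleProperties} verified — $B = PC(2k)$ has odd-girth $2k+1$, and $(\widetilde{B}, d_B)$ has the all $k$-good triple property — we conclude that $PC(2k)$ bounds $\SP{2k+1}$, as desired. The only genuinely substantive input is Lemma~\ref{propertiesofPC}(iii), whose proof (already given) handles the combinatorial heart of the matter via an explicit coordinate construction together with distance-transitivity; the rest of the argument here is bookkeeping. The one point requiring a small amount of care is confirming that the $T_{2k+1}(p,q,r)$ construction genuinely gives two \emph{distinct} witnesses $z_1, z_2$ realizing $\{p,q,r\}$ in both orientations when $q \neq r$, and that when $q = r$ a single witness suffices (which it does, by Definition~\ref{def:all-k-good}, since then $z_1 = z_2$ works for both conditions simultaneously). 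I expect no further obstacle.
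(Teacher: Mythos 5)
Your proposal is correct and follows essentially the same route as the paper: apply the sufficiency direction of Theorem~\ref{thm:TripleProperties} to the ($k$-partial) complete distance graph of $PC(2k)$, using Lemma~\ref{propertiesofPC}(i) for the distance bound and Lemma~\ref{propertiesofPC}(iii) together with the distance-determined-by-$(2k+1)$-cycle observations to realize every $k$-good triple on every edge. Your explicit handling of the second witness $z_2$ (via the swapped triple) only spells out what the paper leaves implicit.
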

\begin{proof}
We show that $(K_{2^{2k}}, d_{PC(2k)})$ has the all $k$-good triple property, which by 
Theorem~\ref{thm:TripleProperties} will prove our claim. By Lemma~\ref{propertiesofPC}(i), 
$(K_{2^{2k}}, d_{PC(2k)})$ is a $k$-partial distance graph of $PC(2k)$. By 
Lemma~\ref{propertiesofPC}(ii), $PC(2k)$ is distance-transitive. It remains to prove that 
for each edge $xy$ of weight $p$ ($1\leq p\leq k$) of $(K_{2^{2k}}, d_{PC(2k)})$, every $k$-good 
triple $\{p,q,r\}$ is realized on the edge $xy$. Consider the graph $T=T_{2k+1}(p,q,r)$. We define a 
mapping $\phi$ of $T$ to $PC(2k)$ by first mapping the two vertices $u$ and $v$ of $T$ with 
degree~$4$ and at distance~$p$ in $T$ to $x$ and $y$. By Lemma~\ref{propertiesofPC}(iii), we can 
extend $\phi$ to the whole of $T$. By Observation~\ref{obs:DistanceDetmByCycle}, and by considering 
the three $(2k+1)$-cycles of $T$, we have 
$d_{PC(2k)}(x,z)=q$ and $d_{PC(2k)}(y,z)=r$. This completes the proof.
\end{proof}

Theorem~\ref{BoundedByPC(2k)} has applications to edge-colourings, that will be discussed in 
Section~\ref{sec:applis}.

\subsection{Kneser graphs}

It was recently shown~\cite{NSS15} that if $PC(2k)$ bounds the class of planar graphs of odd-girth $2k+1$ (that is, if Conjecture~\ref{PlanarToPC} holds), then it is an optimal bound of odd-girth $2k+1$ (in terms of the order). However, for $K_4$-minor-free graphs, $PC(2k)$ is far from being optimal.

Consider the hypercube $H(2k+1)$; its vertices can be labeled by subsets of a $(2k+1)$-set (call it $U$) where $X$ and $Y$ are adjacent if $X\subset Y$ and $|X|+1=|Y|$. In this notation, antipodal pairs are complementary sets. Recall that $PC(2k)$ is obtained from $H(2k+1)$ by identifying antipodal pairs of vertices. Thus, the vertices of $PC(2k)$ can be labeled by pairs of complementary subsets of $U$, or simply by a subset of size at most $k$ (the smaller of the two).

It is not difficult to observe the following (where $S\bigtriangleup T$ denotes the symmetric difference of sets $S$ and $T$).

\begin{observation}\label{obs:automorphism}
For any set $A\subseteq U$, the mapping $f_A$ defined by $f_A(\{X,\overline{X}\})=\{A\bigtriangleup X,A\bigtriangleup \overline{X}\}$ is an automorphism of $PC(2k)$.
\end{observation}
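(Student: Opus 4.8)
The plan is to verify directly that $f_A$ is a graph automorphism of $PC(2k)$ by checking that it is a well-defined bijection on vertices and that it preserves (and reflects) adjacency. First I would set up the labelling: recall that the vertices of $PC(2k)$ are the classes $\{X,\overline{X}\}$ with $X\subseteq U$, $|U|=2k+1$, and two classes $\{X,\overline{X}\}$ and $\{Y,\overline{Y}\}$ are adjacent if and only if some representative of one can be obtained from some representative of the other by adding or removing a single element of $U$; equivalently, the symmetric difference of suitable representatives has size $1$, which since $|U|$ is odd is the same as saying $\{|X\bigtriangleup Y|,|X\bigtriangleup\overline{Y}|\}=\{1,2k\}$. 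I would record this "symmetric-difference" description of adjacency as the working criterion, since it is manifestly symmetric in the roles of $X$ and $\overline{X}$.

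Next I would check that $f_A$ is well-defined and a bijection. Well-definedness is immediate because $f_A$ is described on the unordered pair $\{X,\overline{X}\}$ and $A\bigtriangleup\overline{X}=\overline{A\bigtriangleup X}$, so $\{A\bigtriangleup X, A\bigtriangleup\overline{X}\}$ is again a genuine complementary pair and does not depend on which representative we picked. For bijectivity, note that $f_A$ is its own inverse: applying $f_A$ twice sends $X$ to $A\bigtriangleup(A\bigtriangleup X)=X$, so $f_A\circ f_A=\mathrm{id}$, hence $f_A$ is a permutation of $V(PC(2k))$.

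The core step is adjacency preservation. Suppose $\{X,\overline{X}\}$ and $\{Y,\overline{Y}\}$ are adjacent, witnessed by representatives $X'$ and $Y'$ with $|X'\bigtriangleup Y'|=1$. Then $f_A$ sends these to $\{A\bigtriangleup X,\ldots\}$ and $\{A\bigtriangleup Y,\ldots\}$, and I would compute $(A\bigtriangleup X')\bigtriangleup(A\bigtriangleup Y')=X'\bigtriangleup Y'$, which still has size $1$; hence the images are adjacent. Since $f_A$ is an involution, the same argument applied to $f_A$ shows that non-adjacent pairs go to non-adjacent pairs, so $f_A$ reflects adjacency as well. This uses only the elementary fact that $\bigtriangleup$ is associative and commutative and that $A\bigtriangleup A=\emptyset$, so translation by $A$ preserves all symmetric differences. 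I do not expect a genuine obstacle here; the only mild subtlety to be careful about is the bookkeeping between the "pair of complementary sets" labelling and the "subset of size at most $k$" labelling, so I would phrase the whole argument in terms of the complementary-pair labelling where the identity $A\bigtriangleup\overline{X}=\overline{A\bigtriangleup X}$ makes everything symmetric and transparent.
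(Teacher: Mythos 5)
Your proof is correct, and it is essentially the argument the paper has in mind: the paper states this as an observation without proof, the implicit justification being exactly that symmetric difference with $A$ is a translation that preserves all symmetric differences (Hamming distances), is compatible with complementation via $A\bigtriangleup\overline{X}=\overline{A\bigtriangleup X}$, and is an involution, hence an automorphism. Your handling of well-definedness on the complementary-pair labelling and of adjacency reflection via the involution property is exactly the right bookkeeping.
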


Using the above presentation of $PC(2k)$, the set of vertices at distance $k$ from $\emptyset$ are the $k$-subsets of $U$, and two such vertices are adjacent if they have no intersection (because then one is a subset of the complement of the other and the difference of sizes is~$1$). Thus, the Kneser graph $K(2k+1, k)$ (also known as \emph{odd graph}) is an induced subgraph of 
$PC(2k)$. These graphs are well-known examples of distance-transitive graphs (indeed the distance between two vertices is determined by the size of their intersection). In particular, $K(5,2)$ is the Petersen graph. We refer to \cite{GR01} for more details on this family of graphs.

The following theorem is then a strengthening of Theorem~\ref{BoundedByPC(2k)}.

\begin{theorem}\label{BoundedByK(2k+1,k)}
The Kneser graph $K(2k+1,k)$ bounds $\SP{2k+1}$.
\end{theorem}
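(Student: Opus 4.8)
The plan is to invoke the characterization of Theorem~\ref{thm:TripleProperties}: since $K(2k+1,k)$ has odd-girth $2k+1$ (the shortest odd cycle in the odd graph has length $2k+1$), it suffices to exhibit a $k$-partial distance graph $(\widetilde{K},d)$ of $K(2k+1,k)$ having the all $k$-good triple property. The natural candidate is $(K_N, d_{K(2k+1,k)})$ restricted to edges of weight at most $k$, where $N=\binom{2k+1}{k}$; I would first check that in fact \emph{every} pair of distinct vertices of $K(2k+1,k)$ is at distance at most $k$, so this is the full complete distance graph. Here distances are governed by intersection sizes: two $k$-subsets $X,Y$ with $|X\cap Y|=i$ are at distance determined by $i$, and one should record the exact formula (distance $2(k-i)$ when a short "even" route through a common superset/subset is available, or $2i+1$ via the complement route, whichever is shorter), noting the maximum is exactly $k$. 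Then, using distance-transitivity of $K(2k+1,k)$ (the distance between two vertices depends only on $|X\cap Y|$, so $\mathrm{Aut}$ acts transitively on each distance class), it is enough to realize, for one representative edge $xy$ of each weight $p\in\{1,\dots,k\}$, every $k$-good triple $\{p,q,r\}$.

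For the realization step I would argue exactly as in the proof of Theorem~\ref{BoundedByPC(2k)}: consider $T=T_{2k+1}(p,q,r)$, map its two degree-$4$ vertices $u,v$ to a pair $x,y$ at distance $p$ in $K(2k+1,k)$, and extend to a homomorphism of the whole of $T$; since $T$ and $K(2k+1,k)$ both have odd-girth $2k+1$, Observation~\ref{obs:HomDistanceDetmByCycle} forces the image $z$ of $w$ to satisfy $d(x,z)=q$ and $d(y,z)=r$, giving the desired witness (one gets the ordered pair $(z_1,z_2)$ from $w$ and from the symmetry swapping the roles of $q,r$, or directly since $\{p,r,q\}$ is also $k$-good). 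So the crux is the analogue of Lemma~\ref{propertiesofPC}(iii) for $K(2k+1,k)$: proving that every $k$-good triple realizes on every edge, i.e.\ constructing an explicit homomorphism $T_{2k+1}(p,q,r)\to K(2k+1,k)$ preserving the three pairwise distances among $u,v,w$.

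The main obstacle, and where the work lies, is this explicit construction inside the Kneser graph, which is less "linear-algebraic" than in $PC(2k)$. I would proceed combinatorially: recall (or re-derive from the $PC(2k)$ computation, since $K(2k+1,k)$ sits inside $PC(2k)$ as the distance-$k$ sphere from $\emptyset$, and $PC(2k)$ is vertex-transitive via the automorphisms $f_A$ of Observation~\ref{obs:automorphism}) that it suffices to realize triples where $x,y,z$ are three $k$-subsets with prescribed pairwise intersection sizes matching $p,q,r$; the existence of such a triple of $k$-subsets of a $(2k+1)$-set, subject to the $k$-goodness constraints (parity and triangular/threshold inequalities from Proposition~\ref{prop:good_triple}), is a counting/feasibility lemma about set systems. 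One then threads the three $(2k+1)$-cycles of $T$ through these three vertices, using Lemma~\ref{propertiesofPC}(i)-style arguments that every pair of vertices of $K(2k+1,k)$ lies on a common $(2k+1)$-cycle. I expect the triple-of-subsets feasibility check to reduce, after translating distances back to intersection sizes, to elementary inequalities that are exactly equivalent to $\{p,q,r\}$ being $k$-good; verifying this equivalence in all parity cases is the routine-but-careful part, while everything else follows from distance-transitivity and Theorem~\ref{thm:TripleProperties} as above.
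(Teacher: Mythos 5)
Your framework is the same as the paper's and is set up correctly: $K(2k+1,k)$ has odd-girth $2k+1$ and diameter $k$, so its complete distance graph is a $k$-partial distance graph; by distance-transitivity it suffices, for each $k$-good triple $\{p,q,r\}$, to exhibit three vertices with pairwise distances $p,q,r$; Theorem~\ref{thm:TripleProperties} then gives the bound. (A small remark: the detour through an analogue of Lemma~\ref{propertiesofPC}(iii), i.e.\ extending a homomorphism of $T_{2k+1}(p,q,r)$ into the Kneser graph, is not needed --- since you take all pairs at distance at most $k$ as edges of the partial distance graph, the all $k$-good triple property only asks for the distance conditions on a witness $z$.)

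The genuine gap is that the crux you correctly isolate is then left unproved. The existence, for every $k$-good triple $\{p,q,r\}$, of three $k$-subsets of a $(2k+1)$-set with pairwise Kneser distances $p,q,r$ is not a routine afterthought: it is essentially the entire content of the theorem beyond Theorem~\ref{BoundedByPC(2k)}, and your proposal only asserts that it ``should'' reduce to elementary inequalities equivalent to $k$-goodness, without carrying out the reduction or the parity case analysis. The paper spends almost all of its proof exactly here: starting from a triple $A,B,C$ of vertices of $PC(2k)$ realizing the distances (supplied by Theorem~\ref{BoundedByPC(2k)}), it constructs in several successive steps ($X_1,\dots,X_5$, including a counting argument showing each of the three sets retains at least $k-t$ private elements) a single set $X$ such that $A\bigtriangleup X$, $B\bigtriangleup X$ and $C\bigtriangleup X$ all have size $k$ or $k+1$, so that by Observation~\ref{obs:automorphism} the shifted triple lies in the induced copy of $K(2k+1,k)$ with the same pairwise distances. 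Your alternative route --- translate each target distance $d$ into the forced intersection size ($k-d/2$ if $d$ is even, $(d-1)/2$ if $d$ is odd) and check feasibility of the Venn-diagram cell counts (a suitable triple-intersection size $t$ making all eight cells non-negative in a ground set of size $2k+1$) against the conditions of Proposition~\ref{prop:good_triple} --- is a legitimate and arguably more self-contained strategy, but that verification over the parity cases is precisely the missing work; as written, the proposal stops where the proof has to begin.
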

\begin{proof}
We use Theorem~\ref{thm:TripleProperties} by showing that $(K_n,d_{K(2k+1,k)})$ with $n=|V(K(2k+1,k))|={\binom{2k+1}{k}}$ is a $k$-partial distance graph of $K(2k+1,k)$ and has the all $k$-good triple property. Since $K(2k+1,k)$ is distance-transitive, it suffices to prove that for any $k$-good triple $\{p,q,r\}$, there are three vertices in $K(2k+1,k)$ whose pairwise distances are $p$, $q$ and $r$. We will use Theorem~\ref{BoundedByPC(2k)} and the above-mentioned presentation of $K(2k+1,k)$ as an induced subgraph of $PC(2k)$. In this presentation, each vertex of $PC(2k)$ is a pair $(S,\overline{S})$ of subsets of a $(2k+1)$-set $U$, where $|S|\leq k$. We may use any of $S$ or $\overline{S}$ to denote the vertex $(S,\overline{S})$.

Let $\{p,q,r\}$ be a $k$-good triple. By Theorem~\ref{BoundedByPC(2k)}, there are three vertices $A$, $B$ and $C$ of $PC(2k)$ with $|A|\leq k$, $|B|\leq k$ and $|C|\leq k$ such that their pairwise distances are~$p$, $q$ and $r$. Our goal is to select a set $X$ of elements of $U$, such that the sizes of their symmetric differences $A\bigtriangleup X$, $B\bigtriangleup X$ and $C\bigtriangleup X$ are $k$ or $k+1$. Then, all these three new vertices belong to an induced subgraph of $PC(2k)$ isomorphic to $K(2k+1,k)$. Since by Observation~\ref{obs:automorphism} the operation is an automorphism of $PC(2k)$, we have proved the claim for the triple $\{p,q,r\}$. In fact, we will construct $X$ in four steps.

First, let $X_1$ be the set of elements of $U$ that belong to at least two of the sets $A$, $B$, and $C$, and let $A_1=A\bigtriangleup X_1$, $B_1=B\bigtriangleup X_1$ and $C_1=C\bigtriangleup X_1$. Then, $A_1$, $B_1$ and $C_1$ are pairwise disjoint.

From $A_1$, $B_1$ and $C_1$, we build three sets $A_2$, $B_2$ and $C_2$ such that the difference of the sizes of any two of them is at most~$1$. To do this, assuming $|A_1| \geq |B_1| \geq |C_1|$, we first consider a set $X_2$ of $\lfloor(|B_1|-|C_1|)/2\rfloor$ elements of $B_1$ (none of them belongs to $C_1$ since $B_1$ and $C_1$ are disjoint), and consider again the symmetric differences of the three sets with $X_2$: $A_2=A_1\bigtriangleup X_2$, $B_2=B_1\bigtriangleup X_2$ and $C_2=C_1\bigtriangleup X_2$. Now, we have $|B_2|\geq |C_2|=\lfloor(|B_1|+|C_1|)/2\rfloor$ and $|B_2|-|C_2|\leq 1$, moreover $A_2\cap B_2=\emptyset$. We repeat the operation with $A_2$ and $C_2$: let $X_3$ be a set of $\lfloor(|A_2|-|C_2|)/2\rfloor$ elements of $A_2\setminus (B_2\cup C_2)$ (such a set exists because $\lfloor(|A_2|-|C_2|)/2\rfloor=\lfloor(|A_1|-|C_1|)/2\rfloor\leq |A_1|=|A_2\setminus (B_2\cup C_2)|$). We let $A_3=A_2\bigtriangleup X_3$, $B_3=B_2\bigtriangleup X_3$ and $C_3=C_2\bigtriangleup X_3$. Now, the size of each of $A_3$, $B_3$ and $C_3$ is $s$ or $s+1$, with $s=\lfloor(|A_1|+|B_1|+|C_1|)/3\rfloor$.

Moreover, we have $A_3\cup B_3=A_1\cup B_1$. Since $A_1$ and $B_1$ are disjoint and $|A_1|\leq k$ and $|B_1|\leq k$, we have $s\leq k$. If $s=k$, we are done. Otherwise, consider the set $X_4$ of elements which are in none of $A_3$, $B_3$ and $C_3$. If $|X_4|\geq k-s$, selecting a subset $X_4'$ of $X_4$ of size~$k-s$ and adding $X_4'$ to all three of $A_3$, $B_3$ and $C_3$, we are done. Otherwise, we have $|X_4|<k-s$. Nevertheless, let $A_4=A_3\bigtriangleup X_4=A_3\cup X_4$, $B_4=B_3\bigtriangleup X_4=B_3\cup X_4$ and $C_4=C_3\bigtriangleup X_4=C_3\cup X_4$. Note that now we have $A_4\cup B_4\cup C_4=U$, and the size of each of $A_4$, $B_4$ and $C_4$ is $t$ or $t+1$, with $t<k$.

Let $n_a$ be the number of elements that are in $A_4$ but not in $B_4$ nor $C_4$, that is, $n_a=|A_4\setminus (B_4\cup C_4)|$; $n_b$ and $n_c$ are defined in the same way. Similarly, let $n_{ab}=|(A_4\cup B_4)\setminus C_4|$, and we define $n_{bc}$ and $n_{ac}$ in the same way. We claim that $n_a\geq k-t$. Note that $n_a+n_b+n_{ab}=|U|-n_c$. Hence, since $n_c\leq q+1\leq k$, we have $n_a+n_b+n_{ab}\geq k+1$. But since $n_b+n_{ab}\leq |B_4|\leq t+1$, we have $n_a\geq k-t$. The same argument shows that $n_b\geq k-t$ and $n_c\geq k-t$.

Therefore, we can select three sets of size $k-t$ in each of $A_4\setminus (B_4\cup C_4)$, $B_4\setminus (A_4\cup C_4)$ and $C_4\setminus (A_4\cup B_4)$; let $X_5$ be the union of these three sets. Then, the sets $A_5=A_4\bigtriangleup X_5$, $B_5=B_4\bigtriangleup X_5$ and $C_5=C_4\bigtriangleup X_5$ all have size $k$ or $k+1$, which completes the proof.
\end{proof}

By the definition of fractional chromatic number, the following is an immediate corollary of Theorem~\ref{BoundedByK(2k+1,k)}.

\begin{corollary}\label{cor:fractional}
 For every graph $G$ in $\SP{2k+1}$, we have $\chi_f(G)\leq 2+\frac{1}{k}$.
\end{corollary}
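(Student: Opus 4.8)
The plan is to read off the result directly from Theorem~\ref{BoundedByK(2k+1,k)} using the Kneser-graph description of the fractional chromatic number recalled in Section~\ref{sec:prelim}. Recall that $\chi_f(G)$ is the smallest value of $\frac{p}{q}$ over all positive integers $p,q$ with $2q\le p$ such that $G\to K(p,q)$. First I would check that the pair $(p,q)=(2k+1,k)$ is admissible for this definition: indeed $2q=2k\le 2k+1=p$, so $K(2k+1,k)$ is one of the graphs $K(p,q)$ appearing in the optimization. Next, Theorem~\ref{BoundedByK(2k+1,k)} guarantees that every $G\in\SP{2k+1}$ admits a homomorphism $G\to K(2k+1,k)$. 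Taking this homomorphism as a witness in the definition of $\chi_f$ yields
\begin{equation*}
\chi_f(G)\le \frac{2k+1}{k}=2+\frac{1}{k},
\end{equation*}
which is exactly the claimed bound.

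There is essentially no obstacle: the whole combinatorial content lives in Theorem~\ref{BoundedByK(2k+1,k)}, and the remaining argument is just the one-line translation between ``$G$ maps to the odd graph $K(2k+1,k)$'' and ``$\chi_f(G)\le (2k+1)/k$''. The only point worth a moment's care is confirming the normalization convention $2q\le p$ built into the definition of $\chi_f$, which is satisfied here. (One may also note that the bound is sharp, since $C_{2k+1}\in\SP{2k+1}$ and $\chi_f(C_{2k+1})=2+\frac{1}{k}$, so in fact equality holds for some members of the class; but only the stated inequality is needed here.)
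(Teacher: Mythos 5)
Your proposal is correct and matches the paper's argument: the corollary is deduced immediately from Theorem~\ref{BoundedByK(2k+1,k)} together with the Kneser-graph definition of $\chi_f$, exactly as you describe (including the remark on tightness via $C_{2k+1}$).
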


This bound is tight as $\chi_f(C_{2k+1})=2+\frac{1}{k}$.

During the writing of this paper, it has come to our attention that
Theorem~\ref{BoundedByK(2k+1,k)} and Corollary~\ref{cor:fractional}
were obtained, independently, in a recent preprint of Feder and
Subi~\cite{FSman} and also by Goddard and Xu~\cite{GX15}.

\subsection{Augmented toroidal grids}

In this subsection, we provide a bound of odd-girth $2k+1$ and of order $4k^2$
for $\SP{2k+1}$.

For any pair of integers $(a,b)$, let $T(a,b)$ denote the cartesian
product $C_{a}\Box C_{b}$. This graph can be seen as the {\em toroidal
 grid of dimension $a \times b$}. Figure~\ref{TG(24,24)} depicts a
representation of $T(24,24)$.

\begin{figure}[ht!]
\centering
\includegraphics[width=0.3\textwidth]{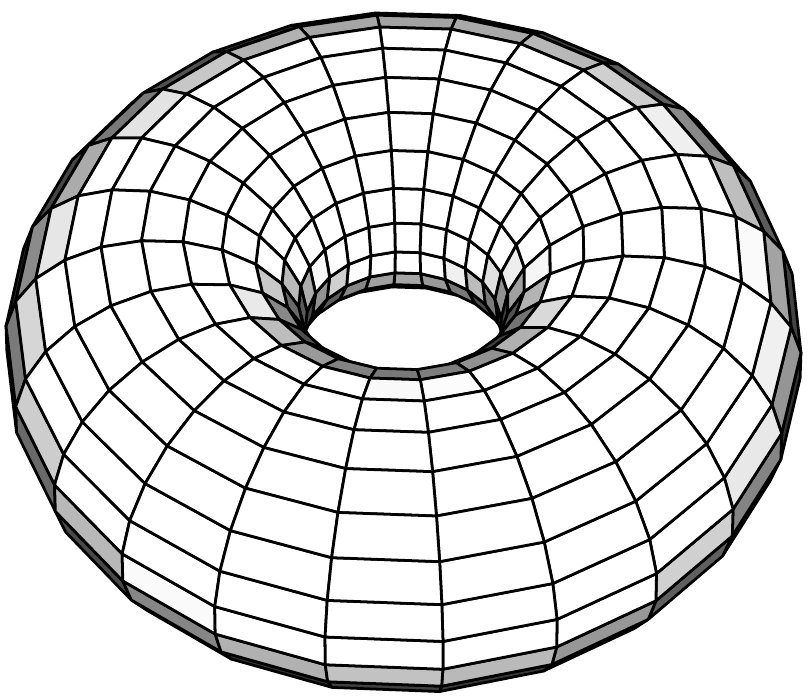}
\caption{A representation of the $24\times 24$ toroidal grid.}\label{TG(24,24)}
\end{figure}

The graph $T(2a,2b)$ is of diameter $a+b$ and, furthermore, given a vertex $v$, there is a unique vertex at distance $a+b$ of $v$ which is therefore called \emph{antipodal} of $v$; we denote it by $\overline v$.

The {\em augmented toroidal grid} of dimensions $2a$ and $2b$, denoted $AT(2a,2b)$ 
is the graph obtained from $T(2a,2b)$ by adding an edge between $v$ and $\overline v$ for each vertex $v$. We will restrict ourselves to augmented toroidal grids of equal dimensions. More formally, for any positive integer $k$, let $AT(2k,2k)$ be the graph defined on the vertex set
$\{0,1,\ldots,2k-1\}^2$ such that a pair $\{(i_1,j_1),(i_2,j_2)\}$ is an edge if

\smallskip

$i_1 = i_2$ and $\lvert j_1 - j_2 \rvert \in \{1,2k-1\}$ (vertical
edges),

\smallskip

or $j_1 = j_2$ and $\lvert i_1 - i_2 \rvert \in \{1,2k-1\}$
(horizontal edges),

\smallskip

or $i_2 - i_1 + k \in \{0,2k\}$ and $j_2 - j_1 + k \in \{0,2k\}$
(antipodal edges).\\[1mm] 
Figure~\ref{fig:toroidal} gives a representation of the graph $AT(6,6)$.

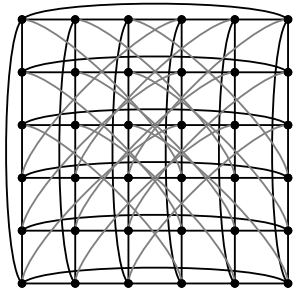
\begin{figure}[ht]
\begin{center}
\scalebox{0.7}{\begin{tikzpicture}[join=bevel,inner sep=0.5mm]

\foreach \I in {0,...,5}\foreach \J in {0,...,5}
         {\node[draw,shape=circle,fill] (\I,\J) at (1*\I,1*\J) {};}
\foreach \I in {0,...,5}\foreach \J in {0,...,4}
         {\draw[-,line width=1pt] (1*\I,1*\J+1)--(1*\I,1*\J);}
\foreach \I in {0,...,4}\foreach \J in {0,...,5}
         {\draw[-,line width=1pt] (1*\I,1*\J)--(1*\I+1,1*\J);}
\foreach \I in {0,...,5}
         {\draw[-,line width=1pt] (1*\I,0) .. controls +(-0.4,0.4) and 
+(-0.4,-0.4) .. (1*\I,5);}
\foreach \J in {0,...,5}
         {\draw[-,line width=1pt] (0,1*\J) .. controls +(0.4,0.4) and 
+(-0.4,0.4) .. (5,1*\J);}

\foreach \I in {0,1,2}\foreach \J in {0,1,2}
         {\draw[-,line width=1pt,color=gray] (1*\I,1*\J) .. controls +(0,0.7) 
and +(-0.7,0) .. (1*\I+3,1*\J+3);}
\foreach \I in {0,1,2}\foreach \J in {3,4,5}
         {\draw[-,line width=1pt,color=gray] (1*\I,1*\J) .. controls +(0.7,0) 
and +(0,0.7) .. (1*\I+3,1*\J-3);}
\foreach \I in {0,...,5}\foreach \J in {0,...,5}
         {\node[draw,shape=circle,fill] (\I,\J) at (1*\I,1*\J) {};}

\end{tikzpicture}}
\end{center}
\caption{The augmented toroidal grid $AT(6,6)$. Gray edges belong to $AT(6,6)$ but not to $T(6,6)$.}
\label{fig:toroidal}
\end{figure}

Observe that after the removal of parallel edges, the graph $AT(2,2)$
is isomorphic to $PC(2)$ (that is $K_4$). Similarly, $AT(4,4)$ is
isomorphic to $PC(4)$. Indeed, $T(4,4)$ is $C_4\Box C_4$ that is $(K_2
\Box K_2) \Box (K_2 \Box K_2)$ which is isomorphic to $H(4)$ (and
antipodal vertices of $T(4,4)$ are antipodal vertices of $H(4)$). The
previous observation is not true for other values of $k$. Nevertheless, in
general, $AT(2k,2k)$ is a subgraph of $PC(2k)$. This property and
others are gathered in the following lemma.

\begin{lemma}
\label{propertiesofAT}
  For any positive integer $k$, let $AT$ denote the graph $AT(2k,2k)$
  and $T$ the graph $T(2k,2k)$. The following statements are true.
  \begin{itemize}
    \item[(i)] $AT$ is a subgraph of $PC(2k)$.
    \item[(ii)] $AT$ is vertex-transitive.
    \item[(iii)] Any two vertices of $AT$ belong to a common
      $(2k+1)$-cycle, hence $AT$ has diameter at most $k$.
    \item[(iv)] $AT$ has odd-girth $2k+1$, hence $AT$ has diameter exactly $k$.
    \item[(v)] Any vertex $v$ in $V(AT)$ can be seen as a vertex in
      $V(T)$ and, for any two vertices $u$ and $v$ in $V(AT)$,
      \begin{equation*}
        d_{AT}(u,v) = \min\{d_T(u,v), 2k+1 - d_T(u,v)\}.
      \end{equation*}
      Moreover, for any vertex $u$ in $V(AT)$ and any integer $d$
      between $1$ and $k$, the neighbourhood of $u$ at distance $d$ in
      $AT$ is the set 
      \begin{equation*}
        N_{AT}^d(u) = N_T^d(u) \cup N_T^{d-1}(\overline{u}).
      \end{equation*}
  \end{itemize}
\end{lemma}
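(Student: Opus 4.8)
The plan is to prove the six items essentially in the order they are stated, since each relies on the previous ones. For (i), the embedding into $PC(2k)$: I would exhibit an explicit injective map $V(AT)\to\mathbb Z_2^{2k}$ that sends edges to edges. The natural candidate is to encode the first coordinate $i\in\{0,\dots,2k-1\}$ by a "unary-modulo-$2k$" word and the second coordinate $j$ similarly, but this uses $4k$ bits; the trick is that $T(2k,2k)$ embeds into $H(2k)$ in a way compatible with the antipodal identification — concretely, encode each cyclic coordinate $C_{2k}$ into $\mathbb Z_2^k$ so that a step along the cycle flips one bit, a full traversal flips all $k$ bits, and the antipodal vertex is the bitwise complement. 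Stacking the two coordinate-codes gives a map to $\mathbb Z_2^{k}\times\mathbb Z_2^{k}=\mathbb Z_2^{2k}$; vertical and horizontal edges flip exactly one bit (an $e_i$ generator), and an antipodal edge flips all $2k$ bits (the generator $J$). Hence $AT\subseteq PC(2k)$.

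For (ii), vertex-transitivity: the translations $(i,j)\mapsto(i+a,j+b)$ modulo $2k$ are automorphisms of $T(2k,2k)$ that commute with the antipodal map, hence are automorphisms of $AT$, and they act transitively on the vertex set. For (v), the distance formula: in $T=T(2k,2k)=C_{2k}\Box C_{2k}$ the distance is additive over the two coordinates, and a shortest path in $AT$ either avoids all antipodal edges (giving $d_T(u,v)$) or uses exactly one of them — using two antipodal edges is never shorter since $JJ=0$ and one can reroute — and going through $\overline u$ costs $1+d_T(\overline u,v)=1+(2k-d_T(u,v))$; taking the minimum yields $d_{AT}(u,v)=\min\{d_T(u,v),\,2k+1-d_T(u,v)\}$. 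The neighbourhood description $N_{AT}^d(u)=N_T^d(u)\cup N_T^{d-1}(\overline u)$ then falls out: a vertex $v$ is at $AT$-distance $d\le k$ from $u$ iff either $d_T(u,v)=d$ or $2k+1-d_T(u,v)=d$, i.e. $d_T(\overline u,v)=2k-d_T(u,v)=d-1$.

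With (v) in hand, (iii) and (iv) are quick. For (iii): given $u,v$, let $d=d_{AT}(u,v)\le k$ (by the formula, since $\min\{x,2k+1-x\}\le k$); take a shortest $AT$-path $P$ of length $d$ from $u$ to $v$ and a path $Q$ from $u$ to $v$ realizing the "other" option — one of $P,Q$ uses an antipodal edge and the other does not, and their lengths sum to $2k+1$ while sharing only the endpoints, so $P\cup Q$ is a closed walk of length $2k+1$ through $u$ and $v$; checking it is actually a cycle (no internal repeats) uses that $T$-geodesics in a coordinate can be chosen internally disjoint. Since every vertex lies in a $(2k+1)$-cycle and any two vertices are at distance $\le k$, the diameter is at most $k$. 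For (iv): $AT$ contains a $(2k+1)$-cycle (from (iii)), so its odd-girth is $\le 2k+1$; to show there is no shorter odd cycle, note that $T(2k,2k)$ is bipartite, so any odd closed walk in $AT$ must use an odd number of antipodal edges, and one antipodal edge changes parity by crossing a "distance-$2k$" jump — a careful parity/length count (each antipodal edge "costs" $2k-1$ extra relative to a single edge, modulo the bipartition) shows an odd cycle needs length $\ge 2k+1$; alternatively, invoke (i) and the fact that $PC(2k)$ has odd-girth $2k+1$, so $AT$ as a subgraph has odd-girth $\ge 2k+1$. Combining with the $(2k+1)$-cycle gives odd-girth exactly $2k+1$, and then diameter exactly $k$ (it is $\le k$ by (iii), and $<k$ would contradict the existence of a vertex at $T$-distance $k$ in a single coordinate, whose $AT$-distance is $k$).

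The main obstacle I expect is (i): producing the right embedding $AT(2k,2k)\hookrightarrow PC(2k)$ and verifying that the three edge types map to the generators $\{e_1,\dots,e_{2k},J\}$ — in particular checking that the antipodal edges of $AT$ land exactly on the $J$-edges of $PC(2k)$ requires the coordinate-code to satisfy "antipode = complement", which is the delicate design choice. Once (i) and the distance formula (v) are established, the rest is bookkeeping; using (i) to deduce the odd-girth lower bound in (iv) is the cleanest route and sidesteps a direct parity argument.
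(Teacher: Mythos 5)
Your plan is correct, and its skeleton matches the paper's proof: the same embedding idea for (i) (the paper labels the three edge types by $e_1,\dots,e_{2k}$ and $J$ and defines the vertex map by path sums, while you define the equivalent vertex encoding directly, which if anything avoids the paper's check that label sums vanish around cycles), the same translation argument for (ii), and the same use of (i) plus the odd-girth of $PC(2k)$ for the lower bound in (iv). The genuine difference is the order of (iii) and (v): the paper first constructs, for $u=(0,0)$ and $v=(i,j)$ with $i,j\le k$, a monotone $T$-geodesic from $u$ through $v$ to $\overline{u}=(k,k)$ and closes it with the antipodal edge to get the $(2k+1)$-cycle, and only then reads off the distance formula via Observation~\ref{obs:DistanceDetmByCycle}; you instead prove the distance formula directly (at most one antipodal edge on a shortest path, plus $d_T(\overline{u},v)=2k-d_T(u,v)$) and then assemble the cycle from the two complementary $u$--$v$ paths. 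Both routes work; yours trades the paper's reliance on the odd-girth observation for a direct metric computation, but it leaves two small steps to be written out: the lower bound when the single antipodal edge sits in the middle of the path (one line: a path $u\cdots w\,\overline{w}\cdots v$ has length $d_T(u,w)+1+d_T(\overline{w},v)\ge 2k+1-d_T(u,v)$ by the triangle inequality), and the internal disjointness of the two paths in (iii), which is cleanest exactly as in the paper, i.e.\ after normalising $u=(0,0)$, $v=(i,j)$ with $0\le i,j\le k$ so that the two monotone geodesics live in boxes meeting only at $v$ (or, alternatively, by noting that two $u$--$v$ paths of total length $2k+1$ sharing an internal vertex would create an odd closed walk shorter than $2k+1$, contradicting the odd-girth bound you already have from (i)).
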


\begin{proof}
  (i). One may label the edges of $AT$ with canonical vectors $e_1, e_2, \ldots, e_m$ of
  $\{0,1\}^{2k}$ and $J$ as follows (indices are now to be understood
  modulo $2k$):

  \smallskip

  $\{(i-1,j),(i,j)\}$ with label $e_{i}$ if $i \leq k$ and $e_{i-k}$
  otherwise,

  \smallskip

  $\{(i,j-1),(i,j)\}$ with label $e_{k+j}$ if $j \leq k$ and $e_{j}$
  otherwise,

  \smallskip

  $\{(i,j),(i+k,j+k)\}$ with label $J$.\\[1mm]
  Note that the binary sum of the labels of the edges along any cycle
  of $AT$ is the all-zero vector. Reciprocally, if the sum of labels
  along a path is the all-zero vector, then this path is closed. Then
  for any path from vertex $(0,0)$ to some vertex $v$ of $AT$, the
  binary sum of the labels is the same. Thus, we may define the
  mapping $\phi$ from the vertices of $AT$ to the vertices of $PC(2k)$
  such that for any vertex $v$ of $AT$, $\phi(v)$ is the binary sum of
  the labels along any path from $(0,0)$ to $v$. The mapping $\phi$ is
  an injective homomorphism from $AT$ to $PC(2k)$. Its image is
  isomorphic to $AT$ which, in turn, is a subgraph of $PC(2k)$.

\medskip

\noindent (ii). Let $v_1$ and $v_2$ be two vertices of $AT$. There are
integers $i_1,i_2,j_1$ and $j_2$ between $0$ and $2k -1$ such that
$v_1=(i_1,j_1)$ and $v_2=(i_2,j_2)$. It is easy to observe that the
mapping $h: (i,j) \mapsto (i+i_2-i_1, j+j_2 -j_1 )$ (operations are modulo $2k$)
is an automorphism of $AT$ mapping $v_1$ to $v_2$.

\medskip

\noindent (iii). Since $AT$ is vertex-transitive, we may assume that
one of these two vertices is the origin $(0,0)$. Let $i$ and $j$ be
two integers between $0$ and $2k-1$. We need to prove that $(0,0)$ and
$(i,j)$ are in a common $(2k+1)$-cycle. By the symmetries of $AT$, we
may assume that $i$ and $j$ are both smaller than or equal to $k$. If
we forget about the antipodal edges, we have the toroidal grid $T$. In
this graph, there is a shortest path from $(0,0)$ to $(k,k)$ going
through $(i,j)$. Together with the antipodal edge $\{(0,0),(k,k)\}$, it
forms a $(2k+1)$-cycle in $AT$ going through $(0,0)$ and $(i,j)$.
  
\medskip

\noindent (iv). This is a consequence of (i), (iii) and the fact that
$PC(2k)$ has odd-girth $2k+1$.

\medskip

\noindent (v). Let $u$ and $v$ be two vertices of $AT$. In (iii), we
described a $(2k+1)$-cycle going through both vertices. This cycle
uses exactly one antipodal edge. Since $AT$ is of odd-girth $2k+1$, by 
Observation~\ref{obs:DistanceDetmByCycle} the
distance between $u$ and $v$ is given by this cycle. On this cycle, we
may distinguish the path from $u$ to $v$ not using the antipodal edge;
it has length $d_T(u,v)$ (see our description in (iii)). The other
path uses an antipodal edge and has length $2k+1 - d_T(u,v)$. The
distance in $AT$ between $u$ and $v$ must be the smaller of these
quantities. Therefore, the first part of (v) is proven.

Let $u$ be a vertex of $AT$. Then, a vertex $v$ can be at distance $d$ from $u$ for two possible reasons. If $d=d_T(u,v)$ then $v$ is in $N_T^d(u)$. Otherwise, $d=2k+1-d_T(u,v)$. In such case, an edge of colour $J$ is used in any shortest path connecting $u$ to $v$. Following the previous proof, one such shortest path can be built starting with the edge $u\overline{u}$ corresponding to $J$. The path from $\overline{u}$ to $v$ is then of length $d-1$ and it is a shortest path connecting the two, thus $v$ is in $N_T^{d-1}(\overline{u})$. Reciprocally, any vertex in $N_T^d(u)$ is in $N_{AT}^d(u)$ because of the distance formula and the fact that $1 \leq d \leq k$. For a vertex $v$ in $N_T^{d-1}(\overline{u})$, any $(2k+1)$-cycle going through $u$, $\overline{u}$ and $v$ uses the edges of a shortest $(d-1)$-path from $\overline{u}$ to $v$. Since $d \leq k$, we may derive that $v$ is in $N_{AT}^d(u)$.
\end{proof}

An illustration of the set $N_{AT}^4((0,0))$ for $k = 7$ described in Lemma~\ref{propertiesofAT}(v), is given in Figure~\ref{fig:N_4(0,0)}. In this figure, towards a simpler presentation, edges connecting top to bottom, right to left and antipodal edges are not depicted.  

\begin{figure}[!ht]
\begin{center}
\scalebox{0.7}{\begin{tikzpicture}[scale=0.6]

\begin{scope}[even odd rule]
\filldraw[draw,thick,rectangle,inner sep=7pt, semitransparent,color=black!25,rotate=45] (7.4,-2.5) rectangle (12.4,2.5) (8.15,-1.75) rectangle (11.65,1.75);
\filldraw[draw,thick,rectangle,inner sep=7pt, semitransparent,color=black!25,rotate=45] (6.7,6.7) rectangle (11.7,7.45);
\filldraw[draw,thick,rectangle,inner sep=7pt, semitransparent,color=black!25,rotate=-45] (6.7,6.7) rectangle (7.45,11.7);
\filldraw[draw,thick,rectangle,inner sep=7pt, semitransparent,color=black!25,rotate=-45] (-3.2,2.45) rectangle (3.2,3.2);
\filldraw[draw,thick,rectangle,inner sep=7pt, semitransparent,color=black!25,rotate=45] (16.6,-1.8) rectangle (17.35,1.8);
\end{scope}

\draw[line width=0.5,color=black!15] (0,0) grid (13,13);
\foreach \I in {0,...,13}\foreach \J in {0,...,13}{
  \node[shape=circle,fill=black,draw=black,minimum size=0.5pt,inner sep=0.5pt](\I\J) at (\I,\J) {};
}
\foreach \I \J in {0/4, 1/3, 2/2, 3/1, 4/0, 0/10, 1/11, 2/12, 3/13, 10/0, 11/1, 12/2, 13/3, 4/7, 5/8, 6/9, 7/10, 8/9, 9/8, 10/7, 9/6, 8/5, 7/4, 6/5, 5/6, 11/13, 12/12, 13/11}{
  \node[shape=circle,fill=black,draw=black,minimum size=0.8pt,inner sep=1.8pt](\I\J) at (\I,\J) {};
}

\foreach \I in {0,...,13}{
  \node[] at (-1,\I) {\I};
  \node[circle,fill=white, inner sep =1,minimum size=0 pt] at (\I,-1) {\I};
}
	
\end{tikzpicture}}
\end{center}
\caption{Illustration of the set $N_{AT}^4((0,0))$ for $k=7$.}
\label{fig:N_4(0,0)}
\end{figure}
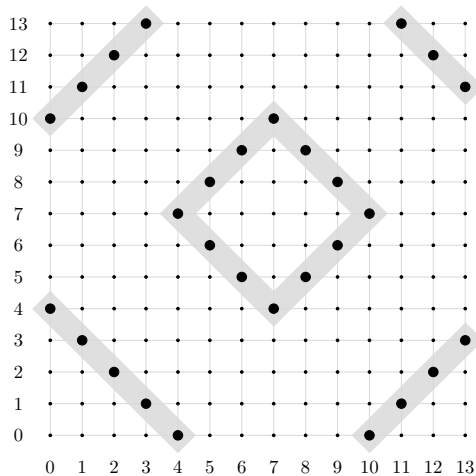

\begin{theorem}\label{thm:ToroidalGrid}
  For any positive integer $k$, the augmented toroidal grid
  $AT(2k,2k)$ bounds $\SP{2k+1}$.
\end{theorem}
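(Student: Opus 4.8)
The plan is to apply the first (sufficiency) direction of Theorem~\ref{thm:TripleProperties}: it suffices to exhibit a $k$-partial distance graph of $AT := AT(2k,2k)$ that has the all $k$-good triple property. By Lemma~\ref{propertiesofAT}(iii)--(iv), $AT$ has odd-girth $2k+1$ and diameter exactly $k$, so the complete distance graph $(K_{4k^2}, d_{AT})$ is already a $k$-partial distance graph of $AT$ (no edge has weight exceeding $k$). Thus I only need to verify that for every ordered pair $(x,y)$ with $d_{AT}(x,y)=p$ and every $k$-good triple $\{p,q,r\}$, there is a vertex $z$ with $d_{AT}(x,z)=q$ and $d_{AT}(y,z)=r$. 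Since $AT$ is vertex-transitive (Lemma~\ref{propertiesofAT}(ii)), I may always assume $x=(0,0)$; but unlike $PC(2k)$ and $K(2k+1,k)$, $AT$ is \emph{not} distance-transitive, so I cannot further normalize $y$, and the argument has to be carried out for a general $y$ at each possible distance $p$.

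The key tool is the explicit description of distance balls in Lemma~\ref{propertiesofAT}(v): $d_{AT}(u,v)=\min\{d_T(u,v),\,2k+1-d_T(u,v)\}$ and $N_{AT}^d(u)=N_T^d(u)\cup N_T^{d-1}(\overline u)$, where $T=T(2k,2k)=C_{2k}\Box C_{2k}$ and $d_T$ is the (easy) $\ell_1$-type distance on the torus. So the problem reduces to a concrete combinatorial statement about the torus: given $x=(0,0)$, given $y$ with toroidal "type" realizing $d_{AT}(x,y)=p$, and given a $k$-good triple $\{p,q,r\}$, find a grid point $z$ so that $z$ (or $z$ via its antipode) sits at $AT$-distance $q$ from $x$ and $r$ from $y$. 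The natural approach is to place $z$ on a geodesic of $AT$ through $x$ and $y$ — concretely, on the $(2k+1)$-cycle through $x,y$ built in the proof of Lemma~\ref{propertiesofAT}(iii), which consists of a torus-geodesic from $(0,0)$ to the antipode $(k,k)$ passing through $y$, closed by the antipodal edge. Walking along this cycle gives, for each integer between $0$ and $k$, a vertex at that $AT$-distance from $x$; one then checks that among these (and their reflections obtained by traversing the cycle the other way, or by using an analogous cycle aligned to $y$ instead of $x$) the required pair of distances $(q,r)$ is attained. Handling the parity/triangle-inequality dichotomy of Proposition~\ref{prop:good_triple} is exactly where the two ways of closing a geodesic (directly vs.\ through an antipodal edge) come in: when $p+q+r$ is even one uses "monotone" geodesics, and when $p+q+r\ge 2k+1$ is odd one routes through an antipodal edge to pick up the extra length.

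A cleaner way to organize the verification may be to use Lemma~\ref{propertiesofAT}(i), which embeds $AT$ as a subgraph of $PC(2k)$ with the distances inherited (this follows since $AT$ and $PC(2k)$ share the odd-girth $2k+1$, so Observation~\ref{obs:DistanceDetmByCycle} forces $d_{AT}$ and $d_{PC(2k)}$ to agree on pairs lying on a common $(2k+1)$-cycle, which by (iii) is all pairs). Then Theorem~\ref{BoundedByPC(2k)}, or rather its engine Lemma~\ref{propertiesofPC}(iii), already guarantees that every $k$-good triple is realized \emph{in $PC(2k)$} on any weight-$p$ pair; the remaining task is to show that the realizing vertex $z$ (and the two geodesics $xz$, $yz$, together with the geodesic $xy$, forming a copy of $T_{2k+1}(p,q,r)$) can be chosen to lie inside the subgraph $AT$. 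This is a "folding back into the torus" argument in the spirit of the proof of Theorem~\ref{BoundedByK(2k+1,k)}, where one uses the torus-translation automorphisms $h$ of Lemma~\ref{propertiesofAT}(ii) to move a given $PC(2k)$-configuration so that it sits on grid points with the correct $N_T^d$/$N_T^{d-1}(\overline u)$ membership. The main obstacle I anticipate is precisely this last step: $AT$ is much less symmetric than $PC(2k)$ or $K(2k+1,k)$ (only a $2k\times 2k$ group of translations, no distance-transitivity), so one cannot simply invoke symmetry to reduce to one configuration; one must track the $(i,j)$-coordinates and the "$d_T$ vs.\ $2k+1-d_T$" casework explicitly for every $p$, and ensure the constructed $z$ lands on an actual vertex of $AT$ with both distances correct. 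The parity bookkeeping — even $p+q+r$ handled by torus geodesics, odd $p+q+r\ge2k+1$ handled by inserting exactly one antipodal edge so that the "missing" length $2k+1-(p+q+r)\bmod$ stuff balances — is routine but must be done carefully, and that is where the real work of the proof lies.
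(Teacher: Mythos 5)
Your setup is the same as the paper's (apply the sufficiency direction of Theorem~\ref{thm:TripleProperties} to the complete distance graph of $AT(2k,2k)$, which is a $k$-partial distance graph since the diameter is $k$ by Lemma~\ref{propertiesofAT}, and use vertex-transitivity to fix $x=(0,0)$), but the heart of the matter --- showing that \emph{every} $k$-good triple $\{p,q,r\}$ is realized on \emph{every} pair at distance $p$ --- is not actually carried out, and the one concrete mechanism you propose cannot work. If the witness $z$ is taken on a $(2k+1)$-cycle containing both $x$ and $y$ (the cycle from Lemma~\ref{propertiesofAT}(iii), or any reflection of it through both vertices), then by Observation~\ref{obs:DistanceDetmByCycle} the distances $d_{AT}(x,z)$ and $d_{AT}(y,z)$ are forced by the cycle, so the only triples realizable this way satisfy $q+r=p$, $\lvert q-r\rvert=p$, or $p+q+r=2k+1$. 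A $k$-good triple with strict triangle inequalities and even sum (e.g.\ $\{2,2,2\}$ for $k\geq 2$), or with odd sum strictly larger than $2k+1$ (e.g.\ $\{k,k,k\}$ for $k\geq 2$), is therefore never attained on such a cycle; the witness must in general sit off every $(2k+1)$-cycle through both $x$ and $y$. The hedge ``or by using an analogous cycle aligned to $y$'' does not repair this: a cycle through $y$ alone controls only one of the two distances, and no verification is offered. Your alternative route (embed $AT$ into $PC(2k)$, invoke Lemma~\ref{propertiesofPC}(iii), and ``fold back into the torus'') is likewise left open at exactly the step you yourself flag as the obstacle, since $AT$ has only the $2k\times 2k$ translation group and no distance-transitivity to move a $PC(2k)$-configuration into the subgraph.

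What the paper does at this point is an explicit coordinate construction, and some version of it (or an equally explicit case analysis) is unavoidable: after normalizing $u=(0,0)$, $v=(a,b)$ with $0\leq a\leq b\leq k$ and $q\leq r$, it restricts to the quadrant $S^*$ of vertices $(i,j)$ with $0\leq i,j\leq k$, describes $N^d_{AT}(u)\cap S^*$ as a union of two diagonals and proves Claim~\ref{clm:N^d(i,j)} giving a sufficient diagonal-plus-band condition for membership in $N^d_{AT}((i,j))\cap S^*$; it then extracts from $k$-goodness (via Proposition~\ref{prop:good_triple}) the three inequalities $q+r\geq b-a$, $r-q\leq a+b$ and $2k+1-r\geq a+b-q$, and finally writes down the witness $w=(x,y)$ explicitly in two cases according to the parity of $a+b+q+r$: $y=\tfrac12(b+r-a-q)$, $x=r-y$ when the sum is even (so $w\in\mathrm{Diag}(r)$), and $y=\tfrac12(b-a-q-r+2k+1)$, $x=2k+1-r-y$ when it is odd (so $w\in\mathrm{Diag}(2k+1-r)$), checking in each case via the claim that $d_{AT}(w,v)=q$. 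Your proposal stops precisely where this work begins, so as it stands it is a correct plan of attack with the essential argument missing, and its only concrete suggestion for producing $z$ is demonstrably insufficient.
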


\begin{proof}
  Let $k$ be some fixed positive integer and let $AT$ and $T$ denote
  respectively the graphs $AT(2k,2k)$ and $T(2k,2k)$. We shall prove
  that the complete distance graph of $AT$ has the all $k$-good triple
  property. The edge set is clearly non-empty.  Let $p$ be some
  integer between $1$ and $k$ and let $u$ and $v$ be two vertices at
  distance $p$ in $AT$. Let $q$ and $r$ be two integers such that
  $\{p,q,r\}$ is a $k$-good triple. We want to find a vertex $w$ at
  distance $r$ of $u$ and at distance $q$ of $v$. 
  
  Since $AT$ is vertex-transitive, we may assume that $u = (0,0)$. We may also assume, without loss 
of generality, that 
  $q \leq r$. Moreover, thanks to vertical and horizontal symmetries
  of $AT$, we may assume that $v = (a,b)$ where $a$ and $b$ are
  between $0$ and $k$. Finally, since the vertical axis and the
  horizontal axis play the same role, we may assume that $a \leq
  b$. With these assumptions made, we shall restrict ourselves to the
  set of vertices $S^*$ consisting of vertices $(i,j)$ with $i$ and $j$
  between $0$ and $k$ and prove that a suitable vertex $w$ lies in
  $S^*$.
  
  For any integer $d$ between $0$ and $k$, we define $\text{Diag}(d)$ to be
  the set of vertices $(i,j)$ in $S^*$ such that $i+j = d$. By
  Lemma~\ref{propertiesofAT}(v),
  \begin{equation}
    N_{AT}^d(u) \cap S^* = \text{Diag}(d) \cup \text{Diag}(2k+1-d).\label{eqn:N^d(u)}
  \end{equation}
  
  \begin{claim}\label{clm:N^d(i,j)}
    Let $(i,j)$ be a vertex in $S^*$ and $d$ an integer between $1$
    and $k$. Let $(x,y)$ be a vertex in $S^*$.  \\ If $i+j-d\leq x+y\leq
    i+j+d$ and $x-y$ equals $i-j-d$ or $i-j+d$, then $(x,y)$ is in
    $N_{AT}^d((i,j))$.
  \end{claim}
  \claimproof Let $(x,y)$ be a vertex in $S^*$ such that $i+j-d\leq
  x+y\leq i+j+d$ and $x-y=i-j-d$ (the case when $x-y=i-j+d$ is
  analogous). We have $y = x -i+j+d$ then $x+y = 2x -i +j+d$. Since
  $x+y\leq i+j+d$, we deduce that $x \leq i$. Similarly, we may prove
  that $y \geq j$. Thus $\lvert x -i \rvert = i-x$ and $\lvert y-j
  \rvert = y-j$ and $d_T((i,j),(x,y)) = i - x + y-j$. But since
  $x-y=i-j-d$, we have $d_T((i,j),(x,y))=d$. As $d \leq k$,
  Lemma~\ref{propertiesofAT} allows us to conclude that
  $d_{AT}((i,j),(x,y))=d$. This ends the proof of the claim.~\smallqed

\medskip

An illustration of the subset of $N_{AT}^4((2,4))\cap S^*$ described in
Claim~\ref{clm:N^d(i,j)} (for $k = 9$) is shown in Figure~\ref{fig:S*-neighb}.

\begin{figure}[!ht]
\begin{center}
\scalebox{0.8}{\begin{tikzpicture}[scale=0.6]

\begin{scope}[even odd rule]
\filldraw[draw,thick,rectangle,inner sep=7pt, semitransparent,color=black!25,rotate=45] (3.9,3.9) rectangle (7.5,4.55);
\filldraw[draw,thick,rectangle,inner sep=7pt, semitransparent,color=black!25,rotate=-45] (1.05,1) rectangle (1.8,7.5);
\end{scope}

\draw[line width=0.5,color=black!15] (0,0) grid (9,9);
\foreach \I in {0,...,9}\foreach \J in {0,...,9}{
  \node[shape=circle,fill=black,draw=black,minimum size=0.5pt,inner sep=0.5pt](\I\J) at (\I,\J) {};
}
\foreach \I \J in {0/6, 1/7, 2/8, 3/7, 4/6, 5/5, 6/4, 5/3, 4/2, 3/1, 2/0, 1/1, 0/2}{
  \node[shape=circle,fill=black,draw=black,minimum size=0.8pt,inner sep=1.8pt](\I\J) at (\I,\J) {};
}

\node[shape=circle,draw=black,minimum size=0.8pt,inner sep=1.8pt] at (2,4) {};

\foreach \I in {0,...,9}{
  \node[] at (-1,\I) {\I};
  \node[circle,fill=white, inner sep =1,minimum size=0 pt] at (\I,-1) {\I};
}
	
\end{tikzpicture}}
\end{center}
\caption{Illustration of the subset of $N_{AT}^4((2,4))\cap S^*$ (in
  gray) described in Claim~\ref{clm:N^d(i,j)} for $k=9$. The large
  dots are the vertices of $N_{AT}^4((2,4))\cap S^*$.}
\label{fig:S*-neighb}
\end{figure}
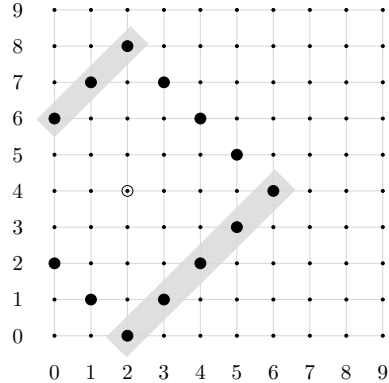

Equation~\eqref{eqn:N^d(u)} tells us that $v$ is either in
$\text{Diag}(p)$ or in $\text{Diag}(2k+1-p)$. In both cases, we may
derive that $p \geq b-a$. Indeed, if $v$ is in $\text{Diag}(p)$, we
have $p = a+b$ and the conclusion is easy. If $v$ is in
$\text{Diag}(2k+1-p)$, we have $p +a +b= 2k+1$ but since $b \leq k$,
we get that $p+a-b \geq 0$. Since $p,q$ and $r$ are distances in the
graph $T_{2k+1}(p,q,r)$ we may use the triangular inequality and
affirm that $r+q \geq p$. In the end, we always have 
\begin{equation} \label{eqn:pba}
  r+q \geq b-a.
\end{equation}

Similarly we show that 
\begin{equation}\label{eqn:rabq}
r - q \leq a+b.
\end{equation} 
Indeed, if $p=a+b$,
it is just the triangular inequality $r \leq p+q$. Otherwise, $a+b =
2k+1-p$ and the quantity $2k+1 -p -r+q$ cannot be negative ($p$ and
$r$ are both at most $k$).

Another inequality we will need is 
\begin{equation}\label{eqn2krabq} 
2k+1 -r \geq a+b-q.
  \end{equation} 
 Once again, if $p=a+b$, it follows easily from the fact that $2k+1 - p-r \geq 0$ (because $p$ and $r$ are at most $k$). If $p=2k+1-a-b$, it is just the expression of the triangular inequality $r \leq p+q$.

To prove our main claim, which is to show the existence of a vertex $w$ at distance $r$ of $u$ and distance $q$ of $v$, we distinguish two cases with respect to the parity of $a+b+q+r$.

\paragraph{Case 1} Suppose that $a+b+q+r$ is even. Then, let $y = \frac{1}{2}(b+r-a-q)$
and $x=r-y=\frac{1}{2}(r+q+a-b)$. First of all, $b+r-a-q$ and
$a+b+q+r$ have the same parity where the latter is assumed to be even. 
Hence $y$ is an integer, and therefore, so is $x$. We have assumed that $a
\leq b$ and $q \leq r$, therefore $y \geq 0$. Since $r$ and $b$ are both
smaller than or equal to $k$, we get that $y \leq k$. Since $y \geq 0$
and $r \leq k$, we observe that $x \leq k$. Inequality~\eqref{eqn:pba}
ensures that $x \geq 0$. We claim that $w=(x,y)$ works. First of all note that $w$ is in
$S^*$. Second, since $x+y=r$, $w$ is in $\text{Diag}(r)$ and by
Equation~\eqref{eqn:N^d(u)}, it is in $N_{AT}^r(u)$. 

It remains to show that $d_{AT}(w,v)=q$.
We have $x-y = a-b+q$ and $x+y=r$. Inequality~\eqref{eqn:rabq} implies that
$x+y \leq a+b+q$. We consider two possibilities based on whether the $p$-path connecting $u$ and 
$v$ uses an edge of colour $J$ or not. Assume $v$ is in $\text{Diag}(p)$, 
in such a case we have $p=a+b$ and the triangular inequality $p \leq q+r$ allows us to derive that 
$a+b - q \leq x+y$. Otherwise $v$ is in $\text{Diag}(2k+1-p)$, and then $a+b = 2k+1-p$.
Hence $p+q+r$ is odd and, therefore, must be greater than or equal to $2k+1$. 
Therefore $a+b-q \leq\ r$. 
In both eventualities, Claim~\ref{clm:N^d(i,j)} ensures that $w$ is also in $N_{AT}^q(v)$.

\paragraph{Case 2} Suppose now that $a+b+q+r$ is odd. Then let $y=
\frac{1}{2}(b-a-q-r+2k+1)$ and $x=2k+1-r-y =
\frac{1}{2}(a-b+q-r+2k+1)$. Then, $b-a-q-r+2k+1$ must be even and $y$
is an integer. In turn, $x$ is also an integer. Since $q$ and $r$ are
at most $k$, we know that $2k+1 -q-r \geq 0$. Since $a \leq b$, we
know that $b-a\geq 0$ so that $y \geq 0$. Moreover, by Inequality~\eqref{eqn:pba}, we get $2k+1 +b-a-q-r \leq 2k+1$ so that $y \leq
k+\frac{1}{2}$. As $y$ is an integer, we have $y \leq k$. Concerning
$x$, the quantity $2k+1 +a+q -b-r$ cannot be negative since $b$ and
$r$ are at most $k$. Thus $x \geq 0$. By assumption, $a \leq b$ and $q
\leq r$ so that $a-b+q-r+2k+1 \leq 2k+1$. Since $x$ is an integer, it
has to be at most $k$. Therefore, if we call $w$ the vertex $(x,y)$,
we have $w$ in $S^*$.\\ Since $x+y=2k+1 -r$, $w$ is in
$\text{Diag}(2k+1-r)$ and by Equation~\eqref{eqn:N^d(u)}, it is in
$N_{AT}^r(u)$.\\ Finally, we have $x-y=a-b+q$ and $x+y = 2k+1-r$. By
Equation~\eqref{eqn2krabq}, we know that $a+b-q \leq x+y$. For the
remaining inequality, we distinguish whether $a+b=p$ or
$a+b=2k+1-p$. If $a+b=p$ then $p+q+r$ is odd and must be at least
$2k+1$ so that $2k+1-r \leq a+b+q$. If $a+b = 2k+1-p$, we just need to
use the triangular inequality $p \leq q+r$ to derive that $2k+1-r \leq
a+b+q$. In both cases, Claim~\ref{clm:N^d(i,j)} ensures that $w$ is
also in $N_{AT}^q(v)$. This completes the proof of Theorem~\ref{thm:ToroidalGrid}.
\end{proof}

\section{Optimal bounds for $\SP{5}$ and $\SP{7}$}\label{sec:smallvalues}

For $k=1$, the triangle ($K_3$) is the best bound one can find for $\SPG=\SP{3}$. This is also best possible in the sense that $K_3\in \SPG$. For $k\geq 2$, in contrast with the case of planar graphs, not only the projective hypercube $PC(2k)$ and the Kneser graph $K(2k+1,k)$ are not optimal bounds, but even the augmented square toroidal grid $AT(2k,2k)$ seems to be non-optimal. In general, we do not know the optimal bounds and leave this an open question, but in this section we describe optimal bounds for the cases $k=2$ and $k=3$. For for $k=1,2,3$, the three optimal bounds also provide the optimal bound for the fractional and circular chromatic numbers of graphs in $\SP{2k+1}$. This suggests that perhaps in general, the optimal bound of odd-girth $2k+1$ for $\SP{2k+1}$ provides, simultaneously, the best bound for both fractional and circular chromatic numbers of graphs in $\SP{2k+1}$. Thus, any such result can be seen as a strengthening of both theorems about these two notions.

\subsection{Bounding $\SP{5}$}

For $k=2$, Theorem~\ref{BoundedByPC(2k)} implies that the Clebsch graph (of order~$16$) bounds $\SP{5}$. By Theorem~\ref{BoundedByK(2k+1,k)}, the Petersen graph (of order~$10$) also bounds $\SP{5}$. One can further check (using Algorithm~\ref{algo}) that some other graphs such as the Dodecahedral graph (of order~$20$), the Armanios-Wells graphs (a distance-regular graph of order~$32$~\cite{Armanios,Wells}) and the Gr\"otzsch graph (of order~$11$) bound $\SP{5}$. From results on circular colouring (see~\cite{HZ00}) we know that $C_{8,3}$ (also known as the Wagner graph) bounds $\SP{5}$. In the next theorem we show that the (unique) optimal bound is obtained from $C_{8,3}$ by removing two edges. More precisely, let $C_8^{++}$ be the graph obtained from an $8$-cycle by adding two disjoint antipodal edges (see Figure~\ref{OptimalSPG5Bound}). Alternatively, $C_8^{++}$  is obtained from the Petersen graph $K(5,2)$ by removing two adjacent vertices. We show next that $C_8^{++}$ is a bound of odd-girth~$5$ for $\SP{5}$ and that it is the unique optimal such bound, both in terms of order and size.

\begin{figure}[ht]
\begin{center}
\scalebox{1.0}{\begin{tikzpicture}[join=bevel,inner sep=0.5mm,scale=0.7]

\node[draw,shape=circle,fill](u0) at (0:2) {};
\node at (0:2.4) {$v_2$};
\node[draw,shape=circle,fill](u1) at (45:2) {};
\node at (45:2.4) {$v_1$};
\node[draw,shape=circle,fill](u2) at (90:2) {};
\node at (90:2.4) {$v_8$};
\node[draw,shape=circle,fill](u3) at (135:2) {};
\node at (135:2.4) {$v_7$};
\node[draw,shape=circle,fill](u4) at (180:2) {};
\node at (180:2.4) {$v_6$};
\node[draw,shape=circle,fill](u5) at (225:2) {};
\node at (225:2.4) {$v_5$};
\node[draw,shape=circle,fill](u6) at (270:2) {};
\node at (270:2.4) {$v_4$};
\node[draw,shape=circle,fill](u7) at (315:2) {};
\node at (315:2.4) {$v_3$};

\draw[color=gray] (u0)--(u2)--(u4)--(u6)--(u0)--(u3)--(u5)--(u7)--(u1)--(u3)--(u6)--(u1)--(u4)--(u7)--(u2)--(u5)--(u0);

\draw[line width=1pt] (u0)--(u1)--(u2)--(u3)--(u4)--(u5)--(u6)--(u7)--(u0) (u1)--(u5) (u3)--(u7);

\end{tikzpicture}}

\end{center}
\caption{The graph $C_8^{++}$ bounding $\SP{5}$ (black edges). The gray edges are the weight~$2$-edges in its corresponding partial distance graph having the all $2$-good triple property.}
\label{OptimalSPG5Bound}
\end{figure}
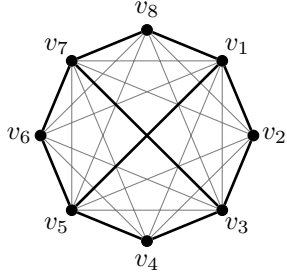

\begin{theorem}\label{C8++}
The graph $C_8^{++}$ bounds $\SP{5}$. Furthermore, this is the unique bound of odd-girth~$5$ for $\SP{5}$ that is optimal both in terms of order and size.
\end{theorem}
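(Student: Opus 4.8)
The plan is to split the statement into two parts: (a) showing that $C_8^{++}$ is a bound of odd-girth~$5$ for $\SP{5}$, and (b) showing it is the unique optimal bound in terms of both order and size. For part (a), I would invoke Theorem~\ref{thm:TripleProperties}: since $C_8^{++}$ clearly has odd-girth $5$, it suffices to exhibit a $2$-partial distance graph $(\widetilde{B},d_B)$ with the all $2$-good triple property. The $2$-good triples are, by Proposition~\ref{prop:good_triple}, exactly $\{1,1,2\}$, $\{1,2,2\}$, $\{2,2,2\}$ and $\{1,2,2\}$-type combinations — concretely, one checks directly that the $2$-good triples are $\{1,1,2\}$, $\{1,2,2\}$, $\{2,2,2\}$ (odd sums $\geq 5$: $\{1,2,2\}$; even sums satisfying triangular inequalities: $\{1,1,2\}$, $\{2,2,2\}$). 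Using the labelling of Figure~\ref{OptimalSPG5Bound} ($C_8: v_1 v_2 \cdots v_8$ with chords $v_1 v_5$ and $v_3 v_7$), I would compute all pairwise distances (they are all $\leq 2$, so the complete distance graph is already a $2$-partial distance graph), mark the weight-$2$ edges (the ``gray'' edges of the figure form a graph isomorphic to $C_8^{++}$ on the antipodal pattern — in fact they are the distance-$2$ graph), and then verify edge by edge, exploiting the dihedral symmetry of $C_8^{++}$ (which acts transitively on the weight-$1$ edges in a couple of orbits and on the weight-$2$ edges), that every $2$-good triple is realized. This is a finite check made short by symmetry: there are essentially two orbits of weight-$1$ edges (cycle edges vs.\ chords) and a small number of weight-$2$ edge orbits. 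Alternatively, and perhaps more cleanly, I would use the stated fact that $C_8^{++}$ is the Petersen graph $K(5,2)$ minus two adjacent vertices together with Theorem~\ref{BoundedByK(2k+1,k)} for $k=2$; but since removing vertices does not obviously preserve the bounding property, the direct verification via Theorem~\ref{thm:TripleProperties} is the safe route.

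For the lower bound on the order, I would invoke the result of He, Sun and Naserasr~\cite{HNS15} (cited in the introduction) that any bound of odd-girth $2k+1$ for $\SP{2k+1}$ has at least $\binom{k+2}{2}$ vertices; for $k=2$ this gives $\binom{4}{2}=6$. So I must rule out bounds on $6$ or $7$ vertices and show $8$ is attained only by $C_8^{++}$. Here I would combine the structural lemmas of Section~3.3: by Lemma~\ref{Degree2Vertex-cycle}, in a minimal bound every degree-$2$ vertex lies on a $6$-cycle; by Lemma~\ref{Adjacent2Vertices}, degree-$2$ vertices form an independent set; and a minimal bound is a core of odd-girth exactly $5$, hence has minimum degree $\geq 2$ and no $C_3$. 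A graph of odd-girth $5$ with few vertices and these degree constraints is highly restricted. The all $2$-good triple property forces, via the $\{1,1,2\}$ triple applied to an edge $uv$, the presence of a common neighbour or a weight-$2$ partner at distance pattern $(2,1)$ and $(1,2)$, i.e.\ a path of length $3$ closing up; combined with triangle-freeness this pins down a lot of local structure. I would enumerate the possibilities: a core of odd-girth $5$ on $\leq 7$ vertices with minimum degree $\geq 2$ is either too sparse to satisfy the $\{2,2,2\}$-realization requirement on every weight-$2$ edge, or contains a forbidden short odd cycle. The main obstacle is making this enumeration genuinely airtight rather than hand-wavy — one has to be careful that ``small girth-5 cores'' such as $C_7$ or small theta-graphs are correctly excluded, and that the $8$-vertex case really forces the two-chord $C_8$ structure and not some other $8$-vertex graph.

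For uniqueness and minimality of size among $8$-vertex bounds: $C_8^{++}$ has $10$ edges. I would argue that any $8$-vertex bound of odd-girth $5$ must, by the all $2$-good triple property and the degree lemmas, contain at least this many edges, and that the incidence structure of weight-$1$ and weight-$2$ edges it must support is realized in a unique way up to isomorphism. Concretely: the realization of $\{1,1,2\}$ on each edge forces enough weight-$2$ structure that, once one also imposes odd-girth $5$ (no $C_3$) and that degree-$2$ vertices are independent and lie on $6$-cycles, a counting argument on the $8$ vertices shows the weight-$1$ graph must be exactly an $8$-cycle plus two antipodal chords, and that the two chords must be ``disjoint antipodal'' (positions $(1,5)$ and $(3,7)$ up to rotation/reflection) rather than, say, $(1,5)$ and $(2,6)$ — the latter would create a $4$-cycle or fail a triple. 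I expect the hardest single point to be this last combinatorial rigidity argument (placing the two chords), so I would isolate it as a short claim with its own mini-proof, using that $C_8^{++}$ must be a core (excluding $4$-cycles, which rules out adjacent-chord configurations) and that each weight-$2$ edge needs its $\{2,2,2\}$ triple realized by a $6$-cycle through it. Finally, I would remark that since $C_8^{++}$ is not $K_4$-minor-free (it contains $K_4$ as a minor via the two chords), it is not optimal in the homomorphism order, consistent with the general observation from \cite{NO05,NO08} quoted in the introduction.
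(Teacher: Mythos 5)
Your strategy coincides with the paper's: part (a) is exactly the paper's argument (apply Theorem~\ref{thm:TripleProperties} after exhibiting a $2$-partial distance graph and do a symmetry-reduced finite check of the triples $\{1,1,2\}$, $\{1,2,2\}$, $\{2,2,2\}$), and the minimality/uniqueness part is the same style of argument via cores, Lemma~\ref{Degree2Vertex-cycle} and Lemma~\ref{Adjacent2Vertices}. One concrete slip in part (a): it is not true that all pairwise distances in $C_8^{++}$ are at most $2$; with the labelling of Figure~\ref{OptimalSPG5Bound} one has $d(v_2,v_6)=d(v_4,v_8)=3$, so the complete distance graph is \emph{not} a $2$-partial distance graph. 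The paper's certificate is precisely $K_8-\{v_2v_6,v_4v_8\}$ (and, relatedly, the gray weight-$2$ edges number $16$, so they do not form a copy of $C_8^{++}$). This is easily repaired --- delete the two distance-$3$ pairs --- and your verification then goes through as in the paper.

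For the lower bounds, your plan is viable but stops where the work actually is, as you acknowledge. The paper does not need~\cite{HNS15}: it argues directly that a minimal bound $B$ is a triangle-free core containing a $5$-cycle, that $B\neq C_5$ because $T_5(2,2,2)\in\SP{5}$ has no homomorphism to $C_5$ (a case your sketch covers only implicitly through~\cite{HNS15}), that no triangle-free core has exactly six vertices, and that a seven-vertex bound is impossible by a short core/degree argument using Lemma~\ref{Adjacent2Vertices}. For size and uniqueness the paper shows that at most nine edges would force order exactly $8$ and then a contradiction in two cases, and that with exactly ten edges the adjacencies are forced, vertex by vertex, to yield $C_8^{++}$; your proposed ``counting argument plus chord-placement claim'' is plausible but is left schematic at precisely the point you identify as hardest, so as written it is a plan rather than a proof of the uniqueness statement.
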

\begin{proof}
We will use the labeling of Figure~\ref{OptimalSPG5Bound}. Thus, the vertices of the $8$-cycle are labeled $v_1, v_2, \ldots, v_8$ in the cyclic order. Edges $v_1v_5$ and $v_3v_7$ are the two added antipodal edges. To see that this is a bound, by Theorem~\ref{thm:TripleProperties}, it is sufficient to show that some $2$-partial distance graph of $C_8^{++}$ has the all $2$-good triple property. The $2$-partial distance graph we will consider is the graph which has all edges but the two missing diagonals, that is $K_8-\{v_2v_6, v_4v_8\}$, see Figure~\ref{OptimalSPG5Bound} (black edges have weight~$1$ and gray edges have weight~$2$). 
The list of $2$-good triples is $\{1,1,2\}$, $\{1,2,2\}$, $\{2,2,2\}$. This graph is highly symmetric and using these symmetries, it is enough to check the properties only for the following edges: $v_1v_2$, $v_1v_5$ (both of weight 1) and  $v_1v_3$, $v_1v_4$, $v_2v_4$ (of weight 2). This is a straightforward task.
 
\medskip

Next, we show that any bound of odd-girth~$5$ for $\SP{5}$ must have at least eight vertices. Let $B$ be a minimal bound of odd-girth~$5$. Then $B$ must be a core, has no triangle, and should contain a $5$-cycle. Moreover $B$ cannot be isomorphic to $C_5$ because the graph $T_5(2,2,2)$ from $\SP{5}$ does not admit a homomorphism to $C_5$. It can be easily checked that there are no triangle-free cores on six vertices, so by contradiction we may assume that $B$ has seven vertices. Let $v_1, \ldots, v_5$ be the vertices of a $5$-cycle in $B$ and let $v_6$ and $v_7$ be the other two vertices. Then, since $B$ is triangle-free, $v_6$ is adjacent to at most two vertices of the $5$-cycle. Suppose that it is adjacent to exactly two, without loss of generality it is adjacent to $v_1$ and $v_3$. But then, one of $v_2$ and $v_6$ has its neighbourhood included in the other's (because their only other possible neighbour is $v_7$). Hence identifying $v_2$ and $v_6$ is a homomorphism of $B$ to a proper subgraph of itself, which contradicts with $B$ being a core. Thus, each of $v_6$ and $v_7$ is adjacent to at most one vertex in the $5$-cycle; since they cannot be of degree~$1$, they must be adjacent and then we have two adjacent vertices of degree~$2$ in $B$, contradicting Lemma~\ref{Adjacent2Vertices}. Thus, $B$ has order at least~$8$.
 
Now, we want to prove that $B$ must have at least ten edges. By contradiction, suppose $B$ has at most nine edges. Then, $B$ must have order exactly~$8$, as otherwise either it contains a vertex of degree~$1$ or it contains two adjacent vertices of degree~$2$. Again, we know that $B$ must contain a $5$-cycle and we label the vertices of $B$ with $v_1, \ldots, v_8$ such that $v_1, \ldots, v_5$ induces a $5$-cycle. If for some $i\in\{6,7,8\}$ we have $N(v_i) \subset \{v_1, \ldots, v_5\}$, then $B$ is not a core. Thus, by symmetry, we may assume that $v_6v_7$ and $v_6v_8$ are edges of $B$ (hence $v_7$ is not adjacent to $v_8$ as $B$ is triangle-free).

Based on the adjacencies of $v_6$ in $\{v_1, \ldots, v_5\}$, we consider two cases.

\paragraph{Case 1} Assume that $v_6$ is of degree~$2$. In this case, by Corollary~\ref{Adjacent2Vertices}, $v_7$ and $v_8$ each must have two neighbours among $v_1, \ldots, v_5$, which leaves us with a minimum of eleven edges, a contradiction.

\paragraph{Case 2} Assume that $v_6$ has degree at least~$3$. Without loss of
generality assume that $v_1$ is adjacent to $v_6$. In this case, each of $v_7$ and 
$v_8$ has at least one neighbour in $\{v_1, \ldots, v_5\}$. Thus we 
have a minimum of ten edges. 

Now, if there are exactly ten edges and eight vertices, we also have a
$5$-cycle $v_1\ldots v_5$ and edges $v_6v_7$ and $v_6v_8$. Then each
of $v_6, v_7, v_8$ has exactly one neighbour in $\{v_1, \ldots,
v_5\}$. We claim that neither $v_2$ nor $v_5$ can be such a
neighbour. Otherwise, by symmetries, we may assume that $v_7$ is
adjacent to $v_2$, but identifying $v_7$ and $v_1$ produces a
homomorphism of $B$ to a proper subgraph of itself, which contradicts
with $B$ being a core. Finally, since by Lemma~\ref{Adjacent2Vertices}
there cannot be two adjacent vertices of degree~$2$ in $B$, $v_3$ and
$v_4$ are the neighbours of $v_7$, $v_8$. This graph is isomorphic to
$C_8^{++}$ ($v_4v_5$ and $v_1v_6$ being the two diagonal edges).
\end{proof}

Since $C_8^{++}$ is a proper subgraph of both $C_{8,3}$ and the Petersen graph $K(5,2)$, Theorem~\ref{C8++} is a common strengthening of both the result on the circular chromatic number from~\cite{PZ02a} (which states that all graphs in $\SP{5}$ map to $C_{8,3}$) and the case $k=2$ of Corollary~\ref{cor:fractional} on the fractional chromatic number (that all graphs in $\SP{5}$ map to $K(5,2)$).

\subsection{Bounding $\SP{7}$}

Again, by Theorem~\ref{BoundedByPC(2k)}, Theorem~\ref{BoundedByK(2k+1,k)} and 
Theorem~\ref{thm:ToroidalGrid}, the projective hypercube $PC(6)$ (of order~$64$), the Kneser graph $K(7,3)$ (of order~$35$) and $AT(6,6)$ (of order~$36$) all bound $\SP{7}$. Among other bounds are the Coxeter graph and the $16$-vertex graph $X_{16}$ of Figure~\ref{X16}. The Coxeter graph is a subgraph of $K(7,3)$ of order~$28$, more precisely it is obtained from $K(7,3)$ by removing lines of a Fano plane. More noticeably, it is of girth~$7$. The graph $X_{16}$ is the smallest induced subgraph of $PC(6)$ whose \emph{complete} distance graph has the all $3$-good triple property (this was only verified by a computer check). Next, we introduce a graph $X_{15}$ of odd-girth~$7$ which bounds $\SP{7}$. We then show that $15$ is the smallest order of such a bound.

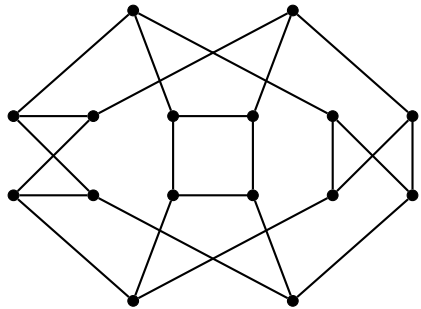
\begin{figure}[ht]
\begin{center}
\scalebox{1}{\begin{tikzpicture}[join=bevel,inner sep=0.5mm,scale=0.7]
\node[draw,shape=circle,fill] (a1) at (2.25,5.5) {};
\node[draw,shape=circle,fill] (a2) at (5.25,5.5) {};

\node[draw,shape=circle,fill] (b1) at (0,3.5) {};
\node[draw,shape=circle,fill] (b2) at (1.5,3.5) {};
\node[draw,shape=circle,fill] (b3) at (3,3.5) {};
\node[draw,shape=circle,fill] (b4) at (4.5,3.5) {};
\node[draw,shape=circle,fill] (b5) at (6,3.5) {};
\node[draw,shape=circle,fill] (b6) at (7.5,3.5) {};

\node[draw,shape=circle,fill] (c1) at (0,2) {};
\node[draw,shape=circle,fill] (c2) at (1.5,2) {};
\node[draw,shape=circle,fill] (c3) at (3,2) {};
\node[draw,shape=circle,fill] (c4) at (4.5,2) {};
\node[draw,shape=circle,fill] (c5) at (6,2) {};
\node[draw,shape=circle,fill] (c6) at (7.5,2) {};

\node[draw,shape=circle,fill] (d1) at (2.25,0) {};
\node[draw,shape=circle,fill] (d2) at (5.25,0) {};

\draw[line width=0.8pt] (b5)--(a1)--(b1)--(b2)--(a2)--(b6);
\draw[line width=0.8pt] (a1)--(b3)--(b4)--(a2);
\draw[line width=0.8pt] (b2)--(c1)--(d1)--(c5)--(b6);
\draw[line width=0.8pt] (d1)--(c3)--(b3);
\draw[line width=0.8pt] (b4)--(c4)--(c3);
\draw[line width=0.8pt] (b1)--(c2)--(c1);
\draw[line width=0.8pt] (c2)--(d2)--(c4);
\draw[line width=0.8pt] (c5)--(b5)--(c6);
\draw[line width=0.8pt] (d2)--(c6)--(b6);
\end{tikzpicture}}
\end{center}
\caption{A $16$-vertex bound for $\SP{7}$.}
\label{X16}
\end{figure}

The graph $X_{15}$ is built from a $10$-cycle whose vertices are labeled (in cyclic order) $v_0,v_1, \ldots, v_9$. Then, a set of five vertices $x_0, x_1, \ldots, x_4$ is added, with $x_i$ being adjacent to $v_i$ and $v_{i+5}$. See Figure~\ref{BoundingSPGgirth7} for two drawings of $X_{15}$. This graph is an induced subgraph of the Kneser graph $K(7,3)$ (and, therefore, also of $PC(6)$), and has circular chromatic number~$5/2$.

\begin{figure}[ht]
\centering
\label{BoundingSPGgirth7-a}
\subfigure[]{\scalebox{1}{\begin{tikzpicture}[join=bevel,inner sep=0.5mm,scale=0.7]
\node[draw,shape=circle,fill] (a) at (0:1.25) {};
\path (a)+(0,0.3) node {$x_0$};
\node[draw,shape=circle,fill] (b) at (72:1.25) {};
\path (b)+(-0.4,0.1) node {$x_1$};
\node[draw,shape=circle,fill] (c) at (144:1.25) {};
\path (c)+(-0.1,-0.4) node {$x_2$};
\node[draw,shape=circle,fill] (d) at (216:1.25) {};
\path (d)+(0.1,-0.4) node {$x_3$};
\node[draw,shape=circle,fill] (e) at (288:1.25) {};
\path (e)+(0.45,0) node {$x_4$};

\node[draw,shape=circle,fill] (0) at (0:2.5) {};
\node at (0:2.9) {$v_0$};
\node[draw,shape=circle,fill] (1) at (36:2.5) {};
\node at (36:2.9) {$v_1$};
\node[draw,shape=circle,fill] (2) at (72:2.5) {};
\node at (72:2.9) {$v_2$};
\node[draw,shape=circle,fill] (3) at (108:2.5) {};
\node at (108:2.9) {$v_3$};
\node[draw,shape=circle,fill] (4) at (144:2.5) {};
\node at (144:2.9) {$v_4$};
\node[draw,shape=circle,fill] (5) at (180:2.5) {};
\node at (180:2.9) {$v_5$};
\node[draw,shape=circle,fill] (6) at (216:2.5) {};
\node at (216:2.9) {$v_6$};
\node[draw,shape=circle,fill] (7) at (252:2.5) {};
\node at (252:2.9) {$v_7$};
\node[draw,shape=circle,fill] (8) at (288:2.5) {};
\node at (288:2.9) {$v_8$};
\node[draw,shape=circle,fill] (9) at (324:2.5) {};
\node at (324:2.9) {$v_9$};

\draw[line width=0.8pt] (0)--(1)--(2)--(3)--(4)--(5)--(6)--(7)--(8)--(9)--(0);
\draw[line width=0.8pt] (0)--(a)--(5);
\draw[line width=0.8pt] (1)--(d)--(6);
\draw[line width=0.8pt] (2)--(b)--(7);
\draw[line width=0.8pt] (3)--(e)--(8);
\draw[line width=0.8pt] (4)--(c)--(9);
\end{tikzpicture}}}\qquad
\subfigure[]{\scalebox{1}{\begin{tikzpicture}[join=bevel,inner sep=0.5mm,scale=0.7]
\node[draw,shape=circle,fill] (0) at (-6:2.5) {};
\node at (-6:2.9) {$v_0$};
\node[draw,shape=circle,fill] (1) at (18:2.5) {};
\node at (18:2.9) {$x_0$};
\node[draw,shape=circle,fill] (2) at (42:2.5) {};
\node at (45:2.9) {$v_5$};
\node[draw,shape=circle,fill] (3) at (66:2.5) {};
\node at (66:2.9) {$v_6$};
\node[draw,shape=circle,fill] (4) at (90:2.5) {};
\node at (90:2.9) {$x_1$};
\node[draw,shape=circle,fill] (5) at (114:2.5) {};
\node at (114:2.9) {$v_1$};
\node[draw,shape=circle,fill] (6) at (138:2.5) {};
\node at (138:2.9) {$v_2$};
\node[draw,shape=circle,fill] (7) at (162:2.5) {};
\node at (162:2.9) {$x_2$};
\node[draw,shape=circle,fill] (8) at (186:2.5) {};
\node at (186:2.9) {$v_7$};
\node[draw,shape=circle,fill] (9) at (210:2.5) {};
\node at (210:2.9) {$v_8$};
\node[draw,shape=circle,fill] (10) at (234:2.5) {};
\node at (234:2.9) {$x_3$};
\node[draw,shape=circle,fill] (11) at (258:2.5) {};
\node at (258:2.9) {$v_3$};
\node[draw,shape=circle,fill] (12) at (280:2.5) {};
\node at (280:2.9) {$v_4$};
\node[draw,shape=circle,fill] (13) at (304:2.5) {};
\node at (304:2.9) {$x_4$};
\node[draw,shape=circle,fill] (14) at (328:2.5) {};
\node at (328:2.9) {$v_9$};

\draw[line width=0.8pt] (0)--(1)--(2)--(3)--(4)--(5)--(6)--(7)--(8)--(9)--(10)--(11)--(12)--(13)--(14)--(0);
\draw[line width=0.8pt] (0)--(5) (3)--(8) (6)--(11) (9)--(14) (12)--(2);

\end{tikzpicture}}}

\caption{Two drawings of the graph $X_{15}$, an optimal $15$-vertex bound for $\SP{7}$.}
\label{BoundingSPGgirth7}
\end{figure}
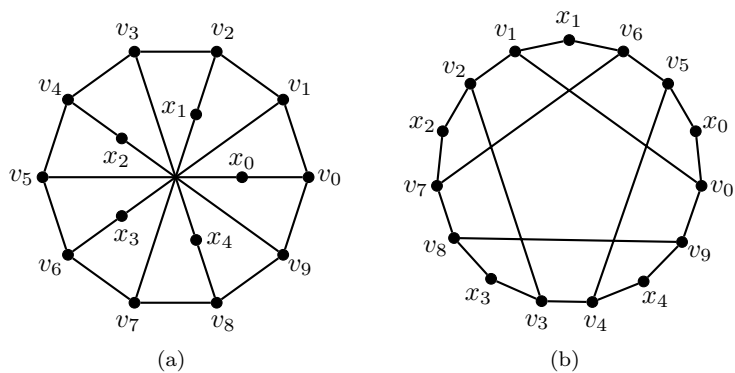

We will show that $X_{15}$ is an optimal bound of odd-girth~$7$ for $\SP{7}$ in terms of the order. In other words, we show no such bound on $14$ or less vertices exists. To prove this, we use the following result of ours on the circular chromatic number of small graphs of odd-girth~$7$~\cite{BFN16}. (Note that the value $14$ is optimal in the statement of Theorem~\ref{thm:OG7-order14-mappingC5}, see~\cite{BFN16} for details.)

\begin{theorem}[\cite{BFN16}]\label{thm:OG7-order14-mappingC5}
Any graph $G$ of order at most $14$ and odd-girth at least~$7$ admits a homomorphism to $C_5$.
\end{theorem}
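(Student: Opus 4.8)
The plan is a minimal-counterexample argument: I would whittle a putative counterexample down by reductions until only a short explicit list of irreducible graphs on at most $14$ vertices remains, and then dispatch each of those. So first I would take $G$ to be a counterexample minimising $|V(G)|$. Since every component of a disconnected graph that maps to $C_5$ yields such a map for the whole graph, $G$ is connected; since the \emph{core} $G^*$ of $G$ is a subgraph on no more vertices, still has odd-girth at least $7$, and satisfies $G\to C_5\iff G^*\to C_5$, minimality forces $G$ to be a core. A bipartite graph maps to $K_2\subseteq C_5$, so $G$ is non-bipartite; hence it has an odd cycle, of length at least $7$ by hypothesis. I would fix a shortest such cycle $C$, of odd length $g\ge 7$, so that at most $14-g\le 7$ vertices lie outside $C$.

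Next come the reduction lemmas, in the spirit of Lemmas~\ref{Degree2Vertex-cycle}--\ref{new_lemma_6-cycle}. A vertex of degree at most $1$ is deleted and the colour of its neighbour extended, since each colour of $C_5$ has two neighbours; so $\delta(G)\ge 2$. For a vertex $v$ of degree $2$ with neighbours $u,w$ we have $u\not\sim w$ (no triangle), and $u,w$ have no further common neighbour (else $v$ retracts onto it, contradicting that $G$ is a core); I would delete $v$ and insert the edge $uw$, the point being that a homomorphism of this smaller graph to $C_5$ pulls back to $G$. This insertion destroys odd-girth $7$ only if $G-v$ contains a $u$--$w$ path of length $2$ or $4$; length $2$ is already excluded, and the length-$4$ case produces a $6$-cycle through $v$, which I would rule out using the core hypothesis together with the tight order bound $14$ (identifying $v$ with a suitable vertex of that $6$-cycle, or locating a second reducible vertex nearby). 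Iterating such arguments one also forbids ``parallel'' low-degree vertices and forces every vertex onto a $7$-cycle, exactly as in Section~\ref{sec:maintheorem}; this pushes $\delta(G)$ up to $3$, so $G$ has at least $21$ edges.

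With $\delta(G)\ge 3$, $|V(G)|\le 14$, no $C_3$, no $C_5$, but a $(g\ge 7)$-cycle $C$, a neighbourhood-and-counting analysis around $C$ (each neighbourhood is independent; the at most $7$ vertices off $C$ are forced into a nearly bipartite pattern relative to $C$) leaves only finitely many candidate graphs. I would check these individually: either colour one through $C_5$ directly — all odd cycles of length $\ge 7$ admit such a map, and each vertex off $C$ adds only a handful of constraints — or verify that it cannot arise as a core counterexample. Finally, tightness of $14$ is witnessed by an explicit $15$-vertex graph of odd-girth $7$ with $\chi_c>5/2$ (not $X_{15}$, which does map to $C_5$).

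The hard part will be the exceptional cases in the degree reductions: odd-girth $7$ tolerates $4$- and $6$-cycles, so the usual deletions/contractions keep threatening to create a $C_3$ or a $C_5$, and excluding this needs the delicate interplay of the core property with the bound $n\le 14$. The second potential bottleneck is the terminal enumeration, which is the step where a computer-assisted search over the remaining irreducible configurations on at most $14$ vertices is most likely unavoidable.
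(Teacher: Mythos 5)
First, a point of comparison: the paper does not prove this statement at all --- it is imported verbatim from the separate manuscript~\cite{BFN16}, so there is no in-paper argument to measure your sketch against; it has to stand on its own. It does not, because its one concrete reduction is invalid. You delete a degree-$2$ vertex $v$ with neighbours $u,w$ and insert the edge $uw$, claiming that a homomorphism of the smaller graph to $C_5$ pulls back to $G$. It cannot: $C_5$ is triangle-free, so two \emph{adjacent} vertices of $C_5$ have no common neighbour, and once $f(u)$ and $f(w)$ are adjacent there is no admissible image for $v$. The constraint that the $2$-thread $u$--$v$--$w$ actually imposes on a $C_5$-colouring of $G-v$ is ``$f(u)=f(w)$ or $d_{C_5}(f(u),f(w))=2$'', which is disjoint from adjacency and cannot be encoded by adding an edge of the graph itself; encoding it forces you into weighted/distance-constrained homomorphisms (in the spirit of the $k$-partial distance graphs of Section~\ref{sec:maintheorem}), at which point the minimal-counterexample induction on plain graphs no longer closes. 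The natural repair, identifying $u$ with $w$ (which does pull back: send $v$ to a neighbour of the common image), only works when $G-v$ has no $u$--$w$ path of length $3$ or $5$; the residual case, a degree-$2$ vertex lying on a $7$-cycle, is exactly the configuration that cannot be reduced, and your parenthetical fallback (identifying $v$ with the antipode of a $6$-cycle) creates a triangle, since the two vertices are joined by a path of length~$3$.

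Because of this, the chain $\delta(G)\ge 3$, hence at least $21$ edges, hence ``a short list of irreducible candidates'', collapses: vertices of degree $2$ and long threads are precisely what makes sparse odd-girth-$7$ graphs hard to $C_5$-colour, and a vertex-minimal counterexample that is a core may well contain them. What remains of the plan is then ``enumerate odd-girth-$7$ graphs on at most $14$ vertices and check'', with no effective structural narrowing, which is not a hand argument and is not justified in the sketch. Note also that Lemmas~\ref{Degree2Vertex-cycle}--\ref{new_lemma_6-cycle} of the paper concern minimal \emph{bounds} for $\SP{2k+1}$ and do not transfer to an arbitrary minimal non-$C_5$-colourable graph. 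The fact that the authors devote a separate paper~\cite{BFN16} to this threshold is a fair indication that the statement needs a genuinely more involved analysis (of graphs critical with respect to $C_5$-colouring) than a degree reduction plus a terminal case check.
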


\begin{theorem}\label{BoundSPG_7}
The graph $X_{15}$ bounds $\SP{7}$. Furthermore, it is a bound of odd-girth~$7$ for $\SP{7}$ that is optimal in terms of the order.
\end{theorem}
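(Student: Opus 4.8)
The plan is to prove the two halves of the statement separately. For the first half---that $X_{15}$ bounds $\SP{7}$---I would invoke Theorem~\ref{thm:TripleProperties}: it suffices to exhibit a $3$-partial distance graph $(\widetilde{X_{15}},d_{X_{15}})$ with the all $3$-good triple property. Since $X_{15}$ is an induced subgraph of $K(7,3)$ (hence of $PC(6)$) and has odd-girth $7$, every pair of vertices is at distance at most $3$ (one should first check this, e.g.\ by the symmetric $D_5$-action on the drawing of Figure~\ref{BoundingSPGgirth7} together with the edges $v_iv_{i+5}$ that guarantee short connections), so the \emph{complete} distance graph $(K_{15},d_{X_{15}})$ is already a $3$-partial distance graph. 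The list of $3$-good triples is finite (by Proposition~\ref{prop:good_triple}: all triples $\{p,q,r\}$ with $1\le p,q,r\le 3$ whose odd sum is $\ge 7$ or whose even sum satisfies the triangular inequalities), and the vertex-transitive (in fact distance-transitive on $K(7,3)$, restricted appropriately) symmetry of $X_{15}$---the dihedral group acting on the $v_i$ and correspondingly on the $x_i$---reduces the verification to a small number of representative edges of each weight $1,2,3$. For each such edge $xy$ of weight $p$ and each $3$-good triple $\{p,q,r\}$ one exhibits a vertex $z$ with $d(x,z)=q$, $d(y,z)=r$; this is a finite (and, given the $5$-fold symmetry, short) case check, which I would organize as a table. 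Alternatively, one may simply cite Algorithm~\ref{algo} applied to $X_{15}$.

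For the second (optimality) half, the claim is that no graph of odd-girth $7$ on at most $14$ vertices bounds $\SP{7}$. Here the key input is Theorem~\ref{thm:circ_chr_nr} of Pan and Zhu: for every $\epsilon>0$ there is $G\in\SP{7}$ with $\chi_c(G)\ge 5/2-\epsilon$; equivalently, $\SP{7}$ is \emph{not} bounded by $C_5$, and more strongly contains graphs of circular chromatic number arbitrarily close to $5/2$ from below. Now suppose $B$ has odd-girth $7$ and $|V(B)|\le 14$. By Theorem~\ref{thm:OG7-order14-mappingC5}, $B\to C_5$. Hence if $B$ bounded $\SP{7}$, every graph in $\SP{7}$ would admit a homomorphism $\SP{7}\ni G\to B\to C_5$, i.e.\ $\SP{7}$ would be bounded by $C_5$, so every graph in $\SP{7}$ would have circular chromatic number at most $\chi_c(C_5)=5/2$. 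This does not yet contradict Theorem~\ref{thm:circ_chr_nr}, which only gives values \emph{approaching} $5/2$. The remaining point is therefore to upgrade ``$\chi_c(G)\le 5/2$ for all $G\in\SP{7}$'' to an actual contradiction. The cleanest way is: a graph $G$ with $G\to C_5$ has $\chi_c(G)\le 5/2$, but if in addition $\chi_c(G)=5/2$ then by Proposition~\ref{prop:tight-cycle} the homomorphism $G\to C_5$ forces a tight $5k$-cycle; and one shows (using Theorem~\ref{thm:circ_chr_nr} to pick $G\in\SP{7}$ with $\chi_c(G)$ strictly between $5/2-\epsilon$ and $5/2$, together with $G\to B\to C_5$) a numerical clash. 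Concretely, I would argue that $\SP{7}$ contains, for suitable $\epsilon$, a graph $G$ with $5/2-\epsilon<\chi_c(G)<5/2$ forced to have \emph{no} homomorphism to $C_5$ whatsoever (since $\chi_c(G)>\chi_c(C_5)$ would be impossible---so actually the right statement is $\chi_c(G)>5/2$ is what Theorem~\ref{thm:circ_chr_nr} should be read as approaching, hence there exist $G\in\SP{7}$ with $\chi_c(G)>5/2$, giving $G\nrightarrow C_5$ directly).

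Let me state the optimality argument in its final, correct form. By Theorem~\ref{thm:circ_chr_nr} together with the fact (from~\cite{PZ02a}) that $\SP{7}$ \emph{is} bounded by $C_5$ being the threshold, the supremum $5/2$ is not attained but is approached; what we actually need is merely that $\SP{7}$ is \emph{not} bounded by any graph that maps to $C_5$. Suppose $B$ has odd-girth $7$, $|V(B)|\le 14$, and $B$ bounds $\SP{7}$. By Theorem~\ref{thm:OG7-order14-mappingC5}, $B\to C_5$, so $C_5$ bounds $\SP{7}$ as well. Then every $G\in\SP{7}$ has $\chi_c(G)\le\chi_c(C_5)=5/2$. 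Fix $\epsilon=1/(2q)$ for a large integer $q$; Theorem~\ref{thm:circ_chr_nr} gives $G\in\SP{7}$ with $\chi_c(G)\ge 5/2-\epsilon$, so $\chi_c(G)\in[5/2-1/(2q),\,5/2]$, and since $\chi_c(G)$ is a rational with bounded denominator on finite graphs, for $q$ large this forces $\chi_c(G)=5/2$ exactly. But a graph with $\chi_c(G)=5/2$ and $G\to C_5$ must, by Proposition~\ref{prop:tight-cycle}, contain a $5k$-tight cycle for some $k\ge 1$; chasing the residues modulo $5$ around such a cycle in a graph of odd-girth $7$ yields the contradiction (a tight $5k$-cycle of odd length $5k$ with $k$ odd is an odd closed walk of length $5k<7$ only if $k=1$, i.e.\ a $C_5$, contradicting odd-girth $7$; if $k$ is even the cycle has even length and the $5/2$ value is not actually forced, so one pushes $\epsilon$ further). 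I expect this last numerical step---pinning down exactly why ``$\chi_c=5/2$ on all of $\SP{7}$ together with $C_5$-boundedness'' is impossible---to be the main obstacle, and I would present it carefully; the authors may instead have a slicker route via a specific small $K_4$-minor-free graph of odd-girth $7$ with no homomorphism to $C_5$, built from the $T_7(p,q,r)$ gadgets or from the construction underlying Theorem~\ref{thm:circ_chr_nr}, and citing Theorem~\ref{thm:OG7-order14-mappingC5} to eliminate all $\le 14$-vertex candidates at once.
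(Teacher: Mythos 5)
Your first half follows the paper's approach (apply Theorem~\ref{thm:TripleProperties} and do a symmetry-reduced finite check), but the specific certificate you propose does not work: the \emph{complete} distance graph of $X_{15}$ does not have the all $3$-good triple property. For instance $d(x_0,x_1)=3$ (their neighbourhoods $\{v_0,v_5\}$ and $\{v_1,v_6\}$ are disjoint), and the $3$-good triple $\{1,3,3\}$ cannot be realized on the pair $x_0x_1$, since every vertex at distance~$1$ from $x_0$ (namely $v_0$ or $v_5$) is at distance~$2$ from $x_1$. This is exactly why the paper takes $\widetilde{X_{15}}$ to contain all pairs \emph{except} the pairs $x_ix_j$; the framework allows any $k$-partial distance graph, so the fix is easy, but your verification as stated would fail.

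The optimality half is where your proposal genuinely breaks down. Your entire strategy is to derive a contradiction from ``$B\to C_5$ and $B$ bounds $\SP{7}$, hence $C_5$ bounds $\SP{7}$'' --- but $C_5$ \emph{does} bound $\SP{7}$ (Pan and Zhu~\cite{PZ02a}, quoted in Section~\ref{sec:prelim}), so no contradiction exists along that route; in particular there is no graph in $\SP{7}$ with $\chi_c>5/2$, and no ``specific small $K_4$-minor-free graph of odd-girth $7$ with no homomorphism to $C_5$'' can exist. Your attempted repair also fails: the circular chromatic numbers of graphs $G_\epsilon\in\SP{7}$ given by Theorem~\ref{thm:circ_chr_nr} are rationals whose denominators grow with $|V(G_\epsilon)|$, so they can approach $5/2$ without ever equalling it, and nothing forces $\chi_c(G)=5/2$ for a member of $\SP{7}$. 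The missing idea is to transfer the circular information to the candidate bound $B$ itself: since $\chi_c$ is monotone under homomorphisms and every $G_\epsilon$ maps to $B$, one gets $\chi_c(B)\geq 5/2-\epsilon$ for all $\epsilon>0$, hence $\chi_c(B)\geq 5/2$ for the \emph{fixed finite} graph $B$, and together with $B\to C_5$ (Theorem~\ref{thm:OG7-order14-mappingC5}) this gives $\chi_c(B)=5/2$ exactly. Then Proposition~\ref{prop:tight-cycle} applied to a $C_5$-colouring of $B$ yields a tight $5k$-cycle in $B$; odd-girth $7$ rules out $k=1$ and $|V(B)|\leq 14$ forces $k=2$, i.e.\ a tight, necessarily chordless, $10$-cycle. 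The bulk of the paper's proof is then a careful structural case analysis around this $10$-cycle --- using minimality of $B$ (it is a core, nonadjacent vertices must be joined by a $5$-walk), Lemma~\ref{Degree2Vertex-cycle}, Lemma~\ref{Adjacent2Vertices}, and the claim that a vertex outside the $10$-cycle attaches only to antipodal pairs --- showing that $14$ vertices cannot accommodate the forced structure. None of this appears in your proposal, so the optimality statement remains unproved.
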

\begin{proof} To see that $X_{15}$ bounds $\SP{7}$, observe that the partial distance graph $(\widetilde{X_{15}},d_{X_{15}})$, where $\widetilde{X_{15}}$ contains all pairs of vertices except for the pairs $x_ix_j$, has the all $3$-good triple property. The list of $3$-good triples is $\{1,1,2\}$, $\{1,2,3\}$, $\{1,3,3\}$, $\{2,2,2\}$, $\{2,2,3\}$, $\{2,3,3\}$, $\{3,3,3\}$.
Considering the symmetries of $X_{15}$, it is enough to check the existence of all possible triangles on the following edges: $v_0v_1$, $v_0 x_0$ (of weight~$1$), $v_0v_2$, $v_0x_3$, $v_0v_5$ (weight~$2$), $v_0v_3$, $v_0v_4$, $v_0x_1$,$x_0x_3$ (weight~$3$). Once again this is straightforward. 

It remains to show that no graph of odd-girth~$7$ on $14$ vertices (or less) bounds $\SP{7}$. Towards a contradiction, assume that $B$ is such a bound and moreover assume $B$ is a minimal such bound, that is, no smaller graph (in terms of the order and size) of odd-girth~$7$ bounds $\SP{7}$.

Thus, in particular, $B$ is a core and, therefore, has minimum degree~$2$. Moreover, for any pair $u,v$ of nonadjacent vertices of $B$, there is a $5$-walk connecting $u$ and $v$ (otherwise, identifying $u$ and $v$ and leaving all other vertices untouched produces a bound of smaller order which is also of odd-girth~$7$).

By Lemma~\ref{Degree2Vertex-cycle}, each vertex of degree~$2$ is part of a $6$-cycle and by Lemma~\ref{Adjacent2Vertices}, $B$ has no pair of adjacent vertices of degree~$2$. Furthermore, by Theorem~\ref{thm:circ_chr_nr} and Theorem~\ref{thm:OG7-order14-mappingC5}, the circular chromatic number of $B$ is exactly $5/2$.

\medskip

Let $\psi$ be a $\frac{5}{2}$-circular colouring of $B$, that is, a mapping of $B$ to the $5$-cycle $C_5$ with $V(C_5)=\{0,1,2,3,4\}$ (although this labeling of $C_5$ does not correspond to the definition of a $\frac{5}{2}$-circular colouring, for simplicity we let the edges of $C_5$ be of the form $i(i+1 \bmod 5)$). By Proposition~\ref{prop:tight-cycle} there should be a tight $5k$-cycle in $B$ for some $k\geq 1$; since $B$ has at most $14$ vertices and odd-girth~$7$ we have $k=2$. Let $C$ be this tight $10$-cycle, and label its vertices with  $v_0, v_2, \ldots, v_9$, in cyclic order. Then we can assume that $\psi(v_i)= i \bmod 5$. From this colouring, it is clear that $C$ does not contain any chord, for otherwise there would be a $5$-cycle in $B$.

Let $x$ be a vertex in $V(B)\setminus V(C)$ and assume by symmetry that $\psi(x)=1$. Then, considering the colouring $\psi$, the only possible neighbours of $x$ in $C$ are $v_0, v_2, v_5$ and $v_7$. Vertex $x$ cannot be adjacent to both $v_2$ and $v_5$ as otherwise $B$ has a $5$-cycle. Similarly it cannot be adjacent to both $v_0$ and $v_7$. Next we show that it cannot be adjacent to both $v_0$ and $v_2$. By contradiction suppose $v$ is adjacent to $v_0$ and $v_2$. Then, $x$ cannot be adjacent to $v_1$ and there must be a $5$-walk connecting $x$ and $v_1$. Since there is also a $2$-path connecting them, the $5$-walk must be a $5$-path. Let its vertices be labeled $x,x_0,x_4,x_3,x_2,v_1$ in the order of the path. Furthermore, since its vertices are part of a $7$-cycle and $\psi(x)=\psi(v_1)=1$, $\psi$ must map this path onto $C_5$. By symmetry of $x$ and $v_1$, we may assume $\psi(x_i)=i$. Since any tight $10$-cycle must be chordless, and since replacing $x$ with $v_1$ in $C$ would result in a new tight $10$-cycle, $x_0$ and $x_2$ are distinct from vertices of $C$. But overall, we have at most $14$ vertices, thus one of $x_3$ or $x_4$ must be on $C$. By symmetry of these two vertices (with respect to the currently forced and coloured structure) we may assume $x_3$ is a vertex of $C$. Considering the colouring $\psi$ either we have $x_3=v_3$ in which case $v_3x_4x_0xv_2$ is a $5$-cycle of $B$, or $x_3=v_8$ in which case $v_8v_9v_0v_1x_2$ is a 5-cycle of $B$, a contradiction in both cases. In conclusion we obtain the following claim.

\begin{claim}\label{clm:OnlyDigonaladjacency}
  Any vertex $x$ in $V(B) \setminus V(C)$ is adjacent to at most two vertices of $C$ and if adjacent to two vertices, then those vertices are antipodal in $C$.
\end{claim}


Since there are at most four vertices not in $V(C)$, and since there are five antipodal pairs in $C$, for one such pair, say $v_0$, $v_5$, both vertices are of degree~$2$ in $B$. Thus, by Lemma~\ref{Degree2Vertex-cycle}, $v_0$ belongs to a $6$-cycle; call it $C_{v_0}$. Then, $v_9$ and $v_1$ are necessarily vertices of $C_{v_0}$. Since $C$ is chordless and by Claim~\ref{clm:OnlyDigonaladjacency}, $C_{v_0}$ shares three or four (consecutive) vertices with $C$. 

We first show that they cannot share four vertices. By symmetry,
assume $v_8$ is the fourth vertex in common. Let $u$ and $v$ be the
other two vertices of $C_{v_0}$, assuming $u$ is adjacent to $v_1$. It
follows that $\psi(u)=0$ and $\psi(v)=4$, therefore replacing $v_0$ by
$u$ and $v_9$ by $v$ in $C$ results in another tight $10$-cycle. Since
$u$ is not adjacent to $v_0$, there must be a $5$-walk connecting $u$
and $v_0$, but since $v_0$ and $u$ are at distance~$2$, this $5$-walk
is a $5$-path. It must have $v_9$ as a vertex. Label the other
vertices $x_1$, $x_2$ and $x_3$. It follows
from Claim~\ref{clm:OnlyDigonaladjacency} and the fact that tight $10$-cycles
are chordless that all these three vertices must be new vertices,
contradicting the fact that $B$ has at most $14$ vertices.

Therefore, we may assume that $v_9v_0v_1xyz$ is a $6$-cycle containing
$v_0$ and $x,y,z$ are all distinct from vertices of $C$. Let $t$ be
the last possible vertex (when $|V(B)|=14$). The possible colours for
$y$ are $1$ or $4$, and by symmetry of these two colours we may assume
$\psi(y)=4$. Thus $y$ is not adjacent to $v_2$ (as $\psi(v_2)=2$), an
edge between $y$ and $v_3$ would result in a $5$-cycle, and neither
$x$ nor $z$ can be adjacent to $v_2$ or $v_3$
by Claim~\ref{clm:OnlyDigonaladjacency}. Thus, by
Lemma~\ref{Adjacent2Vertices}, $t$ is adjacent to one of $v_2$ or
$v_3$. On the other hand, since $v_5$ is of degree~$2$ and again by
Lemma~\ref{Adjacent2Vertices}, both $v_4$ and $v_6$ must have
neighbours in $V(B)\setminus V(C)$. By Claim~\ref{clm:OnlyDigonaladjacency}
and the value of $\psi(y)$, the only possibility is that $x$ is
adjacent to $v_6$ and $z$ is adjacent to $v_4$. Now $y$ cannot be
adjacent to $v_7$ or $v_8$ since it would create a $5$-cycle. Thus, by
Lemma~\ref{Adjacent2Vertices} for pairs $v_2,v_3$ and $v_7,v_8$,
by Claim~\ref{clm:OnlyDigonaladjacency}, and by the symmetry of $v_2,v_7$ and
$v_3,v_8$ we may assume $t$ is adjacent to $v_2$ and $v_7$. The only
other edge that can now be added without creating a shorter odd-cycle
or contradicting $\psi$ is the edge $xt$. But then, $v_3$ is a
degree~$2$ vertex which does not belong to any $6$-cycle. This
contradiction completes the proof.
\end{proof}

Again, observe that $X_{15}$ has circular chromatic number $5/2$ and is a proper subgraph of the Kneser graph $K(7,3)$. Hence, Theorem~\ref{BoundSPG_7} can be seen as a common strengthening of both the result on the circular chromatic number of graphs in $\SP{7}$ (from~\cite{HZ00}) and the case $k=3$ of Corollary~\ref{cor:fractional} on the fractional chromatic number of this family of graphs.

\section{Applications to edge-colourings}\label{sec:applis}

In this section, we present edge-colouring results for $K_4$-minor-free multigraphs that follow from our previous results.

Let $G$ be an $r$-regular multigraph. If $G$ contains a set $X$ of an odd number of vertices such that at most $r-1$ edges connect $X$ to $V(G)-X$, then $G$ cannot be $r$-edge-coloured. An $r$-regular multigraph without such a subset of vertices is called an $r$-\emph{graph}. Seymour's result from~\cite{S90} implies in particular that every $K_4$-minor-free $r$-graph is $r$-edge-colourable. Here, we show that for odd values of $r$, this claim is an easy consequence of Theorem~\ref{BoundedByPC(2k)}, thus giving an alternative proof for the odd cases of Seymour's result of~\cite{S90}. Our results in Sections~\ref{sec:general} and~\ref{sec:smallvalues} are therefore strengthenings of this fact, and we present stronger edge-colouring applications of these 
results.

\begin{theorem}\label{thm:dual}
For every $K_4$-minor-free $(2k+1)$-graph $G$, $\chi'(G)=2k+1$.
\end{theorem}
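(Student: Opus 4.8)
The plan is to pass to the planar dual and invoke Theorem~\ref{BoundedByPC(2k)}. I would start from two parity remarks about a $(2k+1)$-graph $G$: since $G$ is $(2k+1)$-regular, $(2k+1)|X| = 2\,|E(G[X])| + |\delta(X)|$ for every $X \subseteq V(G)$, so $|X|$ and $|\delta(X)|$ always have the same parity; consequently an edge cut of odd \emph{size} separates a set of odd \emph{order}, and so --- $G$ being a $(2k+1)$-graph --- every edge cut of $G$ of odd size has at least $2k+1$ edges. In particular no bridge exists (it would be an odd-size cut separating an odd set), so $G$ is bridgeless; and a disconnected $(2k+1)$-graph splits into connected ones, so we may assume $G$ is connected. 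Fix a planar embedding of $G$ (it is planar, being $K_4$-minor-free) with dual $G^*$. By standard properties of planar duality --- the dual of a $K_4$-minor-free plane (multi)graph is $K_4$-minor-free, since $K_4$ is self-dual, and the cycles of $G^*$ are exactly the bonds of $G$ under a cardinality-preserving edge-bijection --- every odd cycle of $G^*$ is an odd-size edge cut of $G$ and hence has at least $2k+1$ edges; moreover $G^*$ is loopless because $G$ is bridgeless. Thus $G^*$ is a $K_4$-minor-free graph of odd-girth at least $2k+1$, i.e.\ $G^* \in \SP{2k+1}$ (suppressing repeated parallel edges, if one insists on a simple graph, changes nothing relevant below).

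By Theorem~\ref{BoundedByPC(2k)}, there is a homomorphism $\phi\colon G^* \to PC(2k)$. Write $PC(2k) = (\mathbb{Z}_2^{2k}, S)$ in its Cayley form, with $S = \{e_1,\dots,e_{2k},J\}$ and $J = e_1 + \dots + e_{2k}$. For each edge $e$ of $G$, incident to faces $f_1,f_2$, set $c(e) := \phi(f_1) + \phi(f_2) \in S$; this colours $E(G)$ with the $2k+1$ colours in $S$. To show $c$ is proper, fix a vertex $v$ and list the faces met, in cyclic order, while going around $v$, as $f_0,f_1,\dots,f_{2k}$ (indices mod $2k+1$), so that the $i$-th edge at $v$ separates $f_{i-1}$ from $f_i$. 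The colours of the $2k+1$ edges at $v$ then sum, in $\mathbb{Z}_2^{2k}$, to $\sum_i\bigl(\phi(f_{i-1}) + \phi(f_i)\bigr) = 2\sum_j \phi(f_j) = 0$. The crucial point is that the $2k+1$ vectors of $S$ have exactly two $\mathbb{Z}_2$-linear dependences --- the trivial one and $\sum_{s \in S} s = 0$ --- because $S$ contains the standard basis, so its dependence space is one-dimensional and contains the all-ones relation. Hence, writing $m_s$ for the number of edges at $v$ coloured $s$, the vector $(m_s \bmod 2)_{s \in S}$ is a dependence, so it is identically $0$ --- impossible, as $\sum_s m_s = 2k+1$ is odd --- or identically $1$, which with $\sum_s m_s = 2k+1 = |S|$ forces $m_s = 1$ for all $s$. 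So the $2k+1$ edges at $v$ carry all $2k+1$ colours of $S$; $c$ is proper, giving $\chi'(G) \le 2k+1$, and $\chi'(G) = 2k+1$ since $G$ is $(2k+1)$-regular.

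I do not expect a deep obstacle: the care is in the planar-duality bookkeeping (including the benign multigraph and low-connectivity situations mentioned above) and in the passage from a $PC(2k)$-homomorphism of $G^*$ to a $(2k+1)$-edge-colouring of $G$, which is the classical correspondence behind~\cite{N07}. As a sanity check, the parallel edges that $G^*$ may have (they correspond to $2$-edge-cuts of $G$) do not cause trouble: the properness argument at a vertex $v$ already forces distinct colours on all of its $2k+1$ edges, including any pair of them that happen to be parallel in $G$.
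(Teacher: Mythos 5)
Your proposal is correct and follows essentially the same route as the paper: pass to the planar dual $G^*$, show the $(2k+1)$-graph hypothesis makes $G^*$ a member of $\SP{2k+1}$, apply Theorem~\ref{BoundedByPC(2k)}, and pull back the canonical Cayley edge-colouring of $PC(2k)$ through the homomorphism. The only difference is that you spell out the details the paper defers to~\cite{N07} (the cut-parity argument for the dual's odd-girth and the one-dimensional dependence space of $S$ forcing all $2k+1$ colours around each vertex), which is a welcome but not substantively different elaboration.
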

\begin{proof} The proof is the same as a similar proof for planar graphs of~\cite{N07}. We give the main idea. Consider a planar embedding of $G$ and let $G^*$ be the dual of $G$ with respect to this embedding. Note that $G^*$ is also $K_4$-minor-free. Furthermore, it is not difficult to verify (see~\cite{N07} for details) that the hypothesis on $G$ is equivalent to the fact that $G^*$ has odd-girth $2k+1$. Thus, by Theorem~\ref{BoundedByPC(2k)}, $G^*$ admits a homomorphism, say $\phi$, to $PC(2k)$. On the other hand, $PC(2k)$ has a natural $(2k+1)$-edge-colouring by its definition as a Cayley graph. In such a colouring, each $(2k+1)$-cycle receives all $2k+1$ different colours. This edge-colouring of $PC(2k)$ induces an edge-colouring $\psi$ of $G^*$ using the homomorphism $\phi$. While $\psi$ is not necessarily a proper edge-colouring of $G^*$, it inherits the property that each $(2k+1)$-cycle of $G^*$ is coloured with $2k+1$ different colours. Thus, if we colour each edge of $G$ with the colour of its corresponding edge in $G^*$, we obtain a proper edge-colouring.
\end{proof}

Consider the edge-colouring of $C_8^{++}$ induced by $PC(4)$ viewed as the Cayley graph $(\mathbb{Z}_2^{4}, S=\{e_1,e_2,e_3, e_{4},J\})$, where each edge is coloured with the vector of $S$ corresponding to the difference between its endpoints (See Figure~\ref{EdgeColouredC8++} for an illustration).

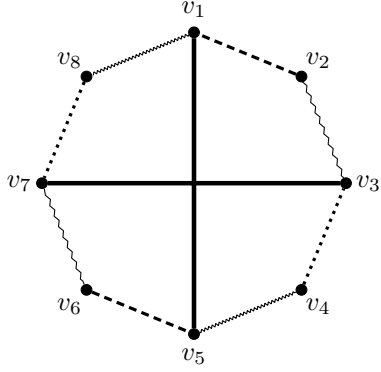
\begin{figure}[ht]
\begin{center}
\scalebox{1.0}{\begin{tikzpicture}[join=bevel,inner sep=0.5mm]

\node[draw,shape=circle,fill](u0) at (0:2) {};
\path (u0)+(0.3,0) node {$v_3$};
\node[draw,shape=circle,fill](u1) at (45:2) {};
\path (u1)+(0.22,0.22) node {$v_2$};
\node[draw,shape=circle,fill](u2) at (90:2) {};
\path (u2)+(0,0.3) node {$v_1$};
\node[draw,shape=circle,fill](u3) at (135:2) {};
\path (u3)+(-0.22,0.22) node {$v_8$};
\node[draw,shape=circle,fill](u4) at (180:2) {};
\path (u4)+(-0.3,0) node {$v_7$};
\node[draw,shape=circle,fill](u5) at (225:2) {};
\path (u5)+(-0.22,-0.22) node {$v_6$};
\node[draw,shape=circle,fill](u6) at (270:2) {};
\path (u6)+(0,-0.3) node {$v_5$};
\node[draw,shape=circle,fill](u7) at (315:2) {};
\path (u7)+(0.22,-0.22) node {$v_4$};

\draw[line width=1.7pt,] (u0)--(u4) (u2)--(u6);

\draw[decoration = {zigzag,segment length = 1.5mm, amplitude = .2mm},decorate] (u0)--(u1) 
(u4)--(u5);

\draw[decoration = {zigzag,segment length = .5mm, amplitude = 
.2mm},decorate] (u2)--(u3) (u6)--(u7); 

\draw[line width=1.2pt, densely dashed] (u1)--(u2) (u5)--(u6);

\draw[line width=1.2pt, dotted] (u3)--(u4) (u7)--(u0);

\end{tikzpicture}}

\end{center}
\caption{The $5$-edge-colouring of $C_8^{++}$ induced by the canonical edge-colouring of $PC(4)$.}
\label{EdgeColouredC8++}
\end{figure}

Note that there are only four $5$-cycles in $C_8^{++}$, and there are only two different cyclic orders of edge-colours induced by these four $5$-cycles. Then, using the technique of the proof of Theorem~\ref{thm:dual}, we can prove the following.

\begin{theorem}\label{thm:edgecolor-perm-C8++}
Let $G$ be a plane $K_4$-minor-free $5$-graph. Let $\{c_1,c_2,c_3,c_4,c_5\}$ be a set of five colours. Then, one can colour the edges of $G$ such that at each vertex, the cyclic order of colours is either $c_1,c_2,c_3,c_4,c_5$ or $c_1,c_4,c_5,c_2,c_3$.
\end{theorem}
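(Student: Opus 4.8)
The plan is to run the planar-duality argument of Theorem~\ref{thm:dual} while keeping track of the cyclic order in which the colours occur around each vertex. First I would fix the given plane embedding of $G$ and let $G^*$ be its dual. As recalled in the proof of Theorem~\ref{thm:dual} (see also~\cite{N07}), $G^*$ is again $K_4$-minor-free, and the hypothesis that $G$ is a $5$-graph is equivalent to $G^*$ having odd-girth exactly $5$; in particular $G^*\in\SP{5}$. Hence, by Theorem~\ref{C8++}, there is a homomorphism $\phi\colon G^*\to C_8^{++}$. I would then fix the canonical $5$-edge-colouring of $C_8^{++}$ induced by $PC(4)=(\mathbb{Z}_2^4,\{e_1,e_2,e_3,e_4,J\})$ as in Figure~\ref{EdgeColouredC8++}, pull it back along $\phi$ to a (not necessarily proper) edge-colouring $\psi$ of $G^*$, and finally colour every edge of $G$ with the $\psi$-colour of its dual edge.

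The key structural observation links the rotation at a vertex of $G$ to a dual face. For a vertex $v$ of $G$, the five edges incident to $v$, listed in the cyclic order given by the embedding, correspond exactly to the five edges bounding the face $f_v$ of $G^*$ dual to $v$, listed in the corresponding cyclic order along $\partial f_v$. Since $G^*$ has odd-girth $5$, this boundary walk of length $5$ is in fact a $5$-cycle $C_v$ of $G^*$ (no proper closed subwalk of odd length $<5$ can occur, and no loop or backtracking is possible, the latter because $G$ is bridgeless); and, again because $C_8^{++}$ has odd-girth $5$, the image $\phi(C_v)$ is a $5$-cycle of $C_8^{++}$ traversed coherently in one direction. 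Therefore the cyclic sequence of colours seen around $v$ in $G$ is precisely the cyclic sequence of colours appearing along some $5$-cycle of $C_8^{++}$ in the canonical colouring.

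It then remains to inspect the $5$-cycles of $C_8^{++}$. As already noted, $C_8^{++}$ has exactly four $5$-cycles, each using precisely one of the two antipodal edges, and a direct check on Figure~\ref{EdgeColouredC8++} shows that, as cyclic words, their colour sequences take only two values, which after a suitable naming of the five colours $c_1,\dots,c_5$ are $c_1,c_2,c_3,c_4,c_5$ and $c_1,c_4,c_5,c_2,c_3$. Combining this with the previous paragraph yields the statement. The only genuinely delicate point is this last finite inspection together with the bookkeeping of orientations: one must account for the two possible directions of traversal of a $5$-cycle of $C_8^{++}$ so as to be sure that the two patterns listed in the statement are exactly the cyclic orders (up to rotation) that can actually arise; everything else is the verbatim duality argument of Theorem~\ref{thm:dual}.
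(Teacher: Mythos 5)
Your proposal is essentially the paper's own proof: the paper establishes this theorem by rerunning the duality argument of Theorem~\ref{thm:dual}, replacing $PC(4)$ by the homomorphism $G^*\to C_8^{++}$ supplied by Theorem~\ref{C8++} together with the $PC(4)$-induced edge-colouring of Figure~\ref{EdgeColouredC8++}, and then using the observation that the four $5$-cycles of $C_8^{++}$ carry only two cyclic colour patterns. You in fact supply more justification than the paper does (that facial walks of $G^*$ are $5$-cycles, that $5$-cycles map onto $5$-cycles, and the traversal-direction bookkeeping, which the paper also glosses over), so the approach and conclusion match.
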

\begin{proof}
The proof is the same as the one of Theorem~\ref{thm:dual}, except that using Theorem~\ref{C8++} we can consider a homomorphism $\phi$ of $G$ to $C_8^{++}$ and the $5$-edge-colouring of $C_8^{++}$ induced by $PC(4)$ which is depicted in Figure~\ref{EdgeColouredC8++}.
\end{proof}

As an application, consider a $5$-colourable plane $5$-graph $G$. Furthermore, suppose that the set of colours available to us are green, dark blue and light blue, dark red and light red. Then, one wishes to $5$-edge-colour $G$ so that at each vertex $x$, in a circular ordering of the edges incident to $x$, the colours light blue and dark blue (light red and dark green, respectively) do not appear consecutively. We call a proper edge-colouring satisfying this constraint, a \emph{super proper edge-colouring}. We obtain the following direct consequence of Theorem~\ref{thm:edgecolor-perm-C8++}.

\begin{corollary}\label{SuperProperColoringC8++}
Every plane $K_4$-minor-free $5$-graph admits a super proper $5$-edge-colouring.
\end{corollary}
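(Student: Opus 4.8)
The plan is to obtain Corollary~\ref{SuperProperColoringC8++} as an immediate specialisation of Theorem~\ref{thm:edgecolor-perm-C8++}: the only work is to choose the correspondence between the generic colour names $c_1,\ldots,c_5$ of that theorem and the five concrete colours (green, the two shades of blue, the two shades of red) so that the ``super proper'' constraint is respected in \emph{both} of the two admissible cyclic patterns produced by that theorem.

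First I would note that since $G$ is $5$-regular and plane, a colouring as produced by Theorem~\ref{thm:edgecolor-perm-C8++} assigns to the five edges at each vertex the five distinct colours $c_1,\ldots,c_5$ in a well-defined cyclic order, hence it is automatically a proper $5$-edge-colouring; so it suffices to ensure the extra requirement that the two blue colours are never on consecutive edges around a vertex, and likewise the two red colours. Next I would compute, for each of the two permitted cyclic orders $c_1,c_2,c_3,c_4,c_5$ and $c_1,c_4,c_5,c_2,c_3$, which unordered pairs $\{c_i,c_j\}$ are \emph{non-consecutive}. For the first order these are $\{c_1,c_3\},\{c_2,c_4\},\{c_3,c_5\},\{c_4,c_1\},\{c_5,c_2\}$, and for the second they are $\{c_1,c_5\},\{c_4,c_2\},\{c_5,c_3\},\{c_2,c_1\},\{c_3,c_4\}$. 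Intersecting these two lists leaves exactly the two pairs $\{c_2,c_4\}$ and $\{c_3,c_5\}$, which happen to be disjoint.

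It then remains only to read off the colouring: let green play the role of $c_1$, let the two shades of blue be $c_2$ and $c_4$ (in either order), and the two shades of red be $c_3$ and $c_5$ (in either order). Applying Theorem~\ref{thm:edgecolor-perm-C8++} with this assignment, the resulting edge-colouring is proper, and at every vertex --- whichever of the two cyclic patterns occurs there --- the two blues occupy a non-consecutive pair and the two reds occupy a non-consecutive pair, precisely because $\{c_2,c_4\}$ and $\{c_3,c_5\}$ were chosen to be non-consecutive in both patterns. This is exactly a super proper $5$-edge-colouring, which proves the corollary.

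There is essentially no obstacle here: the only points requiring care are that being ``consecutive'' in a cyclic order is insensitive to reversing that order, so it is harmless that the labelling does not fix which of $c_2,c_4$ is the dark blue (and similarly for the reds), and that the two pairs surviving the intersection above are disjoint, which is what leaves green free for $c_1$. In effect the corollary is a small statement about the combinatorics of the four $5$-cycles of $C_8^{++}$, and the computation above shows that the two cyclic colour patterns they induce are simultaneously compatible with a common ``forbidden-adjacency'' pattern on two disjoint pairs of colours.
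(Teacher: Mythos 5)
Your proof is correct and is exactly the argument the paper intends: the corollary is presented as a direct consequence of Theorem~\ref{thm:edgecolor-perm-C8++}, obtained by assigning green to $c_1$, the two blues to $\{c_2,c_4\}$ and the two reds to $\{c_3,c_5\}$, which are precisely the pairs non-consecutive in both admissible cyclic patterns. Your explicit verification of that intersection (and the remark that consecutiveness is invariant under reversal of the cyclic order) just spells out what the paper leaves implicit.
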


The next proposition shows that the Icosahedral graph (which has a unique planar embedding, see Figure~\ref{fig:icosahedral}), and which is a $5$-edge-colourable planar $5$-regular graph (see the colouring of Figure~\ref{fig:icosahedral}(b)), admits no super proper $5$-edge-colouring. Therefore Corollary~\ref{SuperProperColoringC8++} cannot be extended to planar $5$-graphs.

\begin{proposition}\label{prop:ico}
The plane Icosahedral graph is not super properly $5$-edge-colourable.
\end{proposition}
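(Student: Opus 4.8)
The plan is to argue by contradiction: suppose the Icosahedral graph $I$ admits a super proper $5$-edge-colouring with colour set $\{g, b_1, b_2, r_1, r_2\}$, where the forbidden consecutive pairs at each vertex (in the cyclic order of the planar embedding) are $\{b_1,b_2\}$ and $\{r_1,r_2\}$. Since $I$ is $5$-regular, at each vertex the five incident edges carry all five colours in some cyclic order. There are $4! = 24$ cyclic orders of five colours, but the super proper constraint forbids those in which $b_1b_2$ or $r_1r_2$ appear consecutively; a short count shows that (up to the dihedral symmetry of the cyclic order, but keeping the colours labelled) the admissible cyclic patterns at a vertex are exactly the two patterns that appear in Theorem~\ref{thm:edgecolor-perm-C8++}, namely the orbit of $g, b_1, r_1, b_2, r_2$ (the pattern where the two blues are separated by exactly one colour and similarly the two reds). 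I would first record this enumeration as the key local constraint: at every vertex, reading the colours cyclically, each of $b_1,b_2$ is flanked by a red and the green, and each of $r_1,r_2$ is flanked by a blue and the green.

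Next I would exploit the global structure of $I$. Fix the green matching $M_g$ (the set of green edges); since green is a colour class of a proper edge-colouring of a $5$-regular graph on $12$ vertices, $M_g$ is a perfect matching with $6$ edges. The remaining graph $I - M_g$ is $4$-regular and is properly edge-coloured by $\{b_1,b_2,r_1,r_2\}$. The local constraint says that at each vertex, in the cyclic order inherited from the embedding of $I$, deleting the green edge leaves the four colours in the cyclic order $b_1, r_1, b_2, r_2$ (up to rotation/reflection consistent with the two allowed patterns) — that is, blues and reds alternate around each vertex of $I - M_g$. Hence $\{b_1,b_2\}$ versus $\{r_1,r_2\}$ is a partition of $E(I - M_g)$ into two perfect matchings (the "blue" $2$-factor-halves), and the alternation condition means that the two $2$-regular subgraphs $B = b_1 \cup b_2$ and $R = r_1 \cup r_2$ together with the rotation system form a specific combinatorial pattern. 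I would then track the faces: $I$ has $20$ triangular faces, and I would examine, for each triangular face $xyz$, which colours its three edges receive. Because each face is a triangle and the three edges at the three corners obey the local cyclic pattern, the colour triples on faces are severely restricted — in particular no face can have two edges of the same colour, and the green edge placements interact rigidly with the blue/red alternation. I expect that counting green edges on faces (each green edge lies on two triangular faces, giving $12$ incidences over $20$ faces) together with the alternation forces a parity contradiction.

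Concretely, the cleanest route is probably this: consider the subgraph $I - M_g$, which is a $4$-regular planar graph on $12$ vertices whose faces come from subdividing/merging faces of $I$; the super proper condition forces $b_1,b_2,r_1,r_2$ to alternate $b, r, b, r$ around every vertex, which is exactly the condition that $B = b_1\cup b_2$ and $R = r_1\cup r_2$ are the two "$2$-factors" of an \emph{Eulerian partition} compatible with the embedding, equivalently that every face of $I - M_g$ has even length and receives an alternating $b/r$ pattern only if its length is even — but some face of $I-M_g$ will have odd length (a triangle of $I$ with no green edge contributes a $3$-face to $I-M_g$), and a $3$-cycle cannot be properly $2$-coloured with alternating colours. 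So the key sub-claim to nail down is: \emph{there exists a triangular face of $I$ all three of whose edges are non-green}. This follows by counting: $12$ green edges are distributed with each on exactly two of the $20$ faces, so at most $24$ face-incidences are green, meaning on average $1.2$ green edges per face; since a face is a triangle it has at most $1$ green edge anyway (two green edges in a triangle share a vertex, impossible for a matching), so exactly the faces with a green edge are "used", and $24$ incidences with at most $1$ per face would need $\geq 24$ faces — but there are only $20$. Hence at least $4$ faces have no green edge, each such triangle has its three edges coloured from $\{b_1,b_2,r_1,r_2\}$, and going around that triangle the colours must alternate blue/red (by the vertex constraint), which is impossible in a $3$-cycle. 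The main obstacle is making the "alternate blue/red around each vertex" deduction fully rigorous from the enumeration of admissible cyclic patterns — I need to be careful that the pattern $g,b_1,r_1,b_2,r_2$ and its mirror indeed force, after deleting $g$, a $b,r,b,r$ alternation, and that this is consistent at all three corners of a green-free triangle; once that local-to-face propagation is pinned down, the face-counting contradiction is immediate, so I would invest the bulk of the write-up in that step, likely with a small figure showing the forced colours around one green-free face.
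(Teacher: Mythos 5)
Your proposal is correct in substance but follows a genuinely different route from the paper's proof. The paper first observes that every triangular face must receive one green, one blue-family and one red-family edge, and then performs a local propagation on the labelled icosahedron (starting from a green edge $xa$ and chasing forced colours through the faces $xfz$, $xaf$, $afi$) until a vertex receives two green edges. You instead argue globally: green is a colour class of a proper $5$-edge-colouring of a $5$-regular graph on $12$ vertices, hence a perfect matching, and a counting argument shows some triangular face contains no green edge; at each corner of such a face the two face edges are consecutive in the rotation, so by the super-proper condition they lie in different families, forcing a blue/red alternation around a $3$-cycle, which is impossible. This counting route is arguably more robust, since it uses only the parameters of the icosahedron ($5$-regular, $12$ vertices, $20$ triangular faces) rather than its explicit structure, and it even shows at least $8$ faces give the contradiction. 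Two slips should be fixed in a write-up. First, the arithmetic: the green matching has $6$ edges (not $12$), giving $12$ face-incidences; since a triangular face can carry at most one matching edge, at most $12$ of the $20$ faces meet a green edge, so at least $8$ (not $4$) faces are green-free --- the conclusion you need survives, but your stated numbers are inconsistent with your own earlier (correct) count of $6$ green edges. Second, the local description is off: in the admissible pattern $g,b,r,b,r$ the blue in the fourth position is flanked by two reds, not by ``a red and the green''; moreover the step you flagged as delicate (that deleting green leaves a $b,r,b,r$ alternation) is not actually needed --- the only local fact your face argument uses is that two blue-family or two red-family edges are never consecutive at a vertex, and that is literally the definition of a super proper colouring, so the enumeration of cyclic patterns and the detour through Eulerian partitions and the faces of $I-M_g$ can be dropped entirely.
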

\begin{proof}
We use the labeling of Figure~\ref{fig:icosahedral}(a). Assume for a contradiction that we have a super proper $5$-edge-colouring. By the uniqueness of the planar embedding of the Icosahedral graph and by its symmetries, we may assume, without loss of generality, that the edge $xa$ is coloured green. Then, again without loss of generality, we can assume that $xb$ and $xz$ are coloured blue (light or dark) and that $xy$ and $xf$ are coloured red (light or dark). Note that the edges of any triangular face must have a green, a blue and a red edge. Therefore, considering the $xfz$-triangle  $fz$ is green and considering the triangle $xaf$, $af$ is blue. Now, at vertex $f$, the edge $fi$ must be red. And then in the triangle $afi$, $ai$ must be green, a contradiction since $a$ has two incident green edges.
\end{proof}

\begin{figure}[ht]
\begin{center}
\subfigure[A planar embedding.]{\scalebox{0.7}{\begin{tikzpicture}[join=bevel,inner sep=0.5mm]

\node[draw,shape=circle,fill](x) at (90:5) {};
\path (x)+(0,0.3) node {$x$};
\node[draw,shape=circle,fill](y) at (210:5) {};
\path (y)+(-0.3,0) node {$y$};
\node[draw,shape=circle,fill](z) at (330:5) {};
\path (z)+(0.3,0) node {$z$};

\node[draw,shape=circle,fill](a) at (90:1.5) {};
\path (a)+(-0.22,0.22) node {$a$};
\node[draw,shape=circle,fill](b) at (150:1.5) {};
\path (b)+(-0.3,0) node {$b$};
\node[draw,shape=circle,fill](c) at (210:1.5) {};
\path (c)+(-0.22,0.22) node {$c$};
\node[draw,shape=circle,fill](d) at (270:1.5) {};
\path (d)+(0,-0.3) node {$d$};
\node[draw,shape=circle,fill](e) at (330:1.5) {};
\path (e)+(0.22,0.22) node {$e$};
\node[draw,shape=circle,fill](f) at (30:1.5) {};
\path (f)+(0.22,0.22) node {$f$};

\node[draw,shape=circle,fill](g) at (150:0.7) {};
\path (g)+(0,0.3) node {$g$};
\node[draw,shape=circle,fill](h) at (270:0.7) {};
\path (h)+(0,0.4) node {$h$};
\node[draw,shape=circle,fill](i) at (30:0.7) {};
\path (i)+(0.25,-0.1) node {$i$};
                    (h)--(d);

\draw[line width=1pt] (x)--(y)--(z)--(x) 
                        (a)--(b)--(c)--(d)--(e)--(f)--(a)
                        (x)--(b)--(y)--(d)--(z)--(f)--(x)
                        (x)--(a)--(g)--(c)--(y)
                        (b)--(g)--(i)--(f)
                        (a)--(i)--(e)--(z)
                        (g)--(h)--(c)
                        (i)--(h)--(e)
                        (h)--(d);

\end{tikzpicture}}}\hspace{0.5cm}
\subfigure[A $5$-edge-colouring.]{\scalebox{0.7}{\begin{tikzpicture}[join=bevel,inner sep=0.5mm]

\node[draw,shape=circle,fill](x) at (90:5) {};
\node[draw,shape=circle,fill](y) at (210:5) {};
\node[draw,shape=circle,fill](z) at (330:5) {};

\node[draw,shape=circle,fill](a) at (90:1.5) {};
\node[draw,shape=circle,fill](b) at (150:1.5) {};
\node[draw,shape=circle,fill](c) at (210:1.5) {};
\node[draw,shape=circle,fill](d) at (270:1.5) {};
\node[draw,shape=circle,fill](e) at (330:1.5) {};
\node[draw,shape=circle,fill](f) at (30:1.5) {};

\node[draw,shape=circle,fill](g) at (150:0.7) {};
\node[draw,shape=circle,fill](h) at (270:0.7) {};
\node[draw,shape=circle,fill](i) at (30:0.7) {};

\draw[line width=1.5pt,densely dashed] (x)--(y)
                      (b)--(c)
                      (a)--(i)
                      (e)--(f)
                      (d)--(z)
                      (g)--(h);

\draw[line width=1.5pt, decoration = {zigzag,segment length = 1.5mm, amplitude = .2mm},decorate]
                      (x)--(b)
                      (e)--(z)
                      (y)--(d)
                      (a)--(g)
                      (h)--(c)
                      (i)--(f);

\draw[line width=1.5pt, decoration = {zigzag,segment length = .7mm, amplitude = .4mm},decorate]
                      (x)--(a)
                      (b)--(y)
                      (g)--(i)
                      (z)--(f)
                      (c)--(d)
                      (h)--(e);

\draw[line width=1.5pt, dotted]
                      (f)--(x)
                      (y)--(z)
                      (g)--(c)
                      (a)--(b)
                      (d)--(e)
                      (i)--(h);

\draw[line width=2pt,]
                      (z)--(x) 
                      (f)--(a)
                      (c)--(y)
                      (b)--(g)
                      (i)--(e)
                      (h)--(d);

\end{tikzpicture}}}


\end{center}
\caption{The Icosahedral graph and a 5-edge-colouring of it.}
\label{fig:icosahedral}
\end{figure}
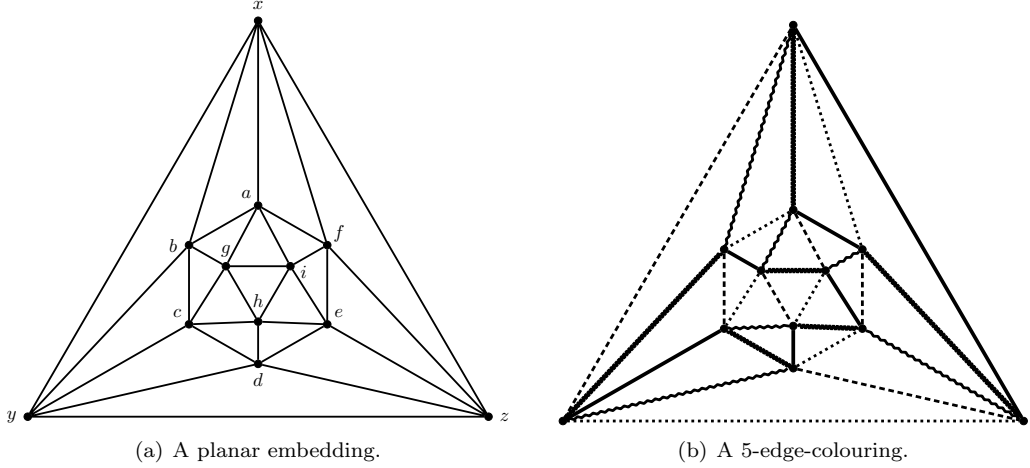

The bound $X_{15}$ of $\SP{7}$ implies a similar edge-colouring result for plane $K_4$-minor-free $7$-graphs as the one of Theorem~\ref{thm:edgecolor-perm-C8++}. To express it, we first describe the $7$-edge-colouring of $X_{15}$ induced by the canonical edge-colouring of $PC(6)$. In this edge-colouring, each edge $v_iv_{i+1}$ of $C_{10}$ is coloured $i \bmod 5$ (thus a total of five colours is used on the edges of $C_{10}$, each twice) Moreover, each pair of edges incident with a vertex $x_j$ ($j=1,2,\ldots, 5$) is coloured with colours $6$ and $7$. In $X_{15}$, any $7$-cycle contains (exactly) one of the $x_j$'s and its two incident edges, and then a continuous set of five edges from $C_{10}$. These five edges therefore always induce the same cyclic order. Thus we obtain the following.

\begin{theorem}
Every plane $K_4$-minor-free $7$-graph $G$ can be edge-coloured with colours $1,2, \ldots, 7$ such that at each vertex, the set of colours $1,2, \ldots, 5$ induces this cyclic order (in either clockwise or anticlockwise order). 
\end{theorem}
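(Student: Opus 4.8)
The plan is to repeat the duality argument used for Theorems~\ref{thm:dual} and~\ref{thm:edgecolor-perm-C8++}, now feeding in the bound $X_{15}$ for $\SP{7}$ together with the $7$-edge-colouring of $X_{15}$ inherited from $PC(6)$ that is described in the paragraph just above the statement.

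First I would fix a planar embedding of $G$ and pass to the plane dual $G^{*}$. As in the proof of Theorem~\ref{thm:dual}, $G^{*}$ is again $K_4$-minor-free, and the hypothesis that $G$ is a $7$-graph is equivalent (see~\cite{N07}) to $G^{*}$ having odd-girth $7$; hence $G^{*}\in\SP{7}$. By Theorem~\ref{BoundSPG_7} there is a homomorphism $\phi\colon G^{*}\to X_{15}$. Composing $\phi$ with the canonical $7$-edge-colouring of $X_{15}$ (the one induced by viewing $PC(6)$ as the Cayley graph $(\mathbb{Z}_2^{6},\{e_1,\dots,e_6,J\})$ and restricting to $X_{15}\subseteq PC(6)$) yields an assignment $\psi$ of the colours $1,\dots,7$ to $E(G^{*})$: an edge $uv$ of $G^{*}$ receives the colour of the edge $\phi(u)\phi(v)$ of $X_{15}$.

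Next I would record the two facts that make $\psi$ usable. On the one hand, each vertex $v$ of $G$ has degree $7$, so its incident edges correspond, in cyclic order, to the edges bounding the face $v^{*}$ of $G^{*}$; this facial boundary is a closed walk of length $7$, and since $G^{*}$ has odd-girth $7$ it must be a $7$-cycle of $G^{*}$, which by Observation~\ref{obs:HomDistanceDetmByCycle} is mapped by $\phi$ onto a $7$-cycle of $X_{15}$. On the other hand, as explained just before the statement, every $7$-cycle of $X_{15}$ consists of one vertex $x_j$ with its two incident edges (coloured $6$ and $7$) together with five consecutive edges of $C_{10}$, and the colour rule $c(v_iv_{i+1})=i \bmod 5$ shows that any five consecutive edges of $C_{10}$ carry exactly the five colours $1,\dots,5$, appearing in their natural cyclic order (read one way or the other). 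Transporting $\psi$ back to $G$ by colouring each edge of $G$ with the colour of its dual edge, the $7$ edges around each vertex $v$ are exactly the edges of the $7$-cycle $v^{*}$ in the same cyclic order (up to reversal); they therefore receive all seven colours — so the colouring is proper — and the five of them coloured from $\{1,\dots,5\}$ occur in the cyclic order $1,2,3,4,5$ in one of the two rotational directions. This is precisely the assertion.

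The only genuinely non-formal step is the structural claim about the $7$-cycles of $X_{15}$ and their colours: that each such cycle uses exactly one $x_j$ (hence exactly two edges coloured $6,7$ and five consecutive $C_{10}$-edges) and that these five edges get colours $\{1,\dots,5\}$ in cyclic order. The first part is a short case analysis using that the $x_j$ are pairwise non-adjacent, that $C_{10}$ is chordless in $X_{15}$, and that $d_{C_{10}}(v_j,v_{j+5})=5$; the second part is immediate from the colour rule. Everything else is the by-now-standard dual-homomorphism transfer and carries no surprises, so I do not expect a real obstacle beyond making the embedding/dual bookkeeping precise.
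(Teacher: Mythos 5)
Your proposal is correct and follows essentially the same route as the paper: the paper leaves the proof implicit, relying on the duality transfer of Theorems~\ref{thm:dual} and~\ref{thm:edgecolor-perm-C8++} together with the description, given just before the statement, of the $7$-edge-colouring of $X_{15}$ inherited from $PC(6)$ and of the structure of its $7$-cycles. Your write-up simply makes explicit the same ingredients (homomorphism of the dual to $X_{15}$ via Theorem~\ref{BoundSPG_7}, facial $7$-walks being $7$-cycles, and the one-$x_j$-plus-five-consecutive-$C_{10}$-edges structure of $7$-cycles), all of which check out.
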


Finally, a general result on edge-colouring plane $K_4$-minor-free $(2k+1)$-graphs can be obtained from Theorem~\ref{thm:ToroidalGrid} (that $AT(2k, 2k)$ bounds $\SP{2k+1}$). Note that a $(2k+1)$-cycle of $AT(2k,2k)$ uses exactly $k$ horizontal edges, $k$ vertical edges and an antipodal edge. Furthermore, the set of horizontal edges, in their order of appearance on the cycle, induces a cyclic order of $e_1, e_2, \ldots, e_k$ and similarly the set of vertical edges induces a cyclic order of $e_{k+1}, e_{k+2}, \ldots, e_{2k}$. Thus, we can derive the following definition of special $2k+1$-edge-colourings.

Given $k$, let $B=b_1, b_2,\ldots, b_{k}$ be a sequence of $k$ distinct colours in the family of blue colours and let $R=r_1, r_2, \ldots, r_{k}$ be a sequence of $k$ distinct colours in the family of red colours. Given a $(2k+1)$-regular plane multigraph $G$, we say that $G$ is $(B,R)$-edge-colourable if it can be properly edge-coloured using colours from $B$, $R$ and a unique green colour such that at each vertex $v$, the cyclic ordering of the blue colours (respectively red) around $v$ always induces the same cyclic order as in $B$ (in $R$, respectively).

\begin{theorem}\label{thm:edge-coloring-fromAT}
Let $B$ and $R$ be two sequences of blue and red colours such that $|B|=|R|=k$. Then, every plane $K_4$-minor-free $(2k+1)$-graph is $(B,R)$-edge-colourable. 
\end{theorem}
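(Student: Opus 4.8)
The plan is to follow the dualization strategy of the proof of Theorem~\ref{thm:dual}, but to refine it by replacing $PC(2k)$ with the augmented toroidal grid $AT(2k,2k)$ equipped with a carefully chosen proper $(2k+1)$-edge-colouring whose restriction to every $(2k+1)$-cycle realizes the prescribed cyclic patterns $B$ and $R$. First I would fix a plane embedding of the plane $K_4$-minor-free $(2k+1)$-graph $G$ and form its planar dual $G^{*}$. Exactly as in Theorem~\ref{thm:dual} (see also~\cite{N07}), $G^{*}$ is again $K_4$-minor-free, and the hypothesis that $G$ is a $(2k+1)$-graph is equivalent to $G^{*}$ having odd-girth $2k+1$; hence $G^{*}\in\SP{2k+1}$, and by Theorem~\ref{thm:ToroidalGrid} there is a homomorphism $\phi\colon G^{*}\to AT(2k,2k)$.

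Next I would set up the colouring of $AT:=AT(2k,2k)$. Using the edge-labelling by $e_1,\dots,e_{2k},J$ from the proof of Lemma~\ref{propertiesofAT}(i), colour each horizontal edge carrying label $e_i$ ($1\le i\le k$) with the blue colour $b_i$, each vertical edge carrying label $e_{k+j}$ ($1\le j\le k$) with the red colour $r_j$, and each antipodal edge (label $J$) with the green colour. The combinatorial core is the claim, already recorded in the paragraph preceding the statement, that every $(2k+1)$-cycle $C'$ of $AT$ uses exactly one antipodal edge, exactly $k$ horizontal edges whose labels appear along $C'$ in the cyclic order $e_1,\dots,e_k$, and exactly $k$ vertical edges whose labels appear in the cyclic order $e_{k+1},\dots,e_{2k}$. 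To prove this I would use that the binary sum of the labels around any cycle of $AT$ vanishes in $\mathbb{Z}_2^{2k}$ (established inside the proof of Lemma~\ref{propertiesofAT}(i)): a cycle of odd length $2k+1$ must therefore traverse an odd number of $J$-edges, hence exactly one; deleting that edge leaves a path on $2k$ edges joining a vertex to its antipode, and by the distance formula of Lemma~\ref{propertiesofAT}(v) (any detour of length $\le k$ would create a shorter odd cycle, contradicting Lemma~\ref{propertiesofAT}(iv)) this path is a geodesic of the torus $T(2k,2k)$ between antipodal vertices, i.e.\ it makes $k$ monotone horizontal and $k$ monotone vertical steps, which produces precisely the claimed cyclic label patterns. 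Consequently $\gamma$ assigns the $2k+1$ edges of any $(2k+1)$-cycle of $AT$ all $2k+1$ colours, each once, with the blue colours occurring in cyclic order $B$ and the red ones in cyclic order $R$.

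Finally I would pull $\gamma$ back along $\phi$ to an edge-colouring of $G^{*}$ and transfer it edge by edge to $G$ (each edge of $G$ has a unique dual edge in $G^{*}$). To check that the resulting colouring is a proper $(B,R)$-edge-colouring, fix a vertex $v$ of $G$. Since every $(2k+1)$-graph with $2k+1\ge 3$ is bridgeless and $G$ is loopless, the face of $G^{*}$ dual to $v$ is bounded by a closed walk whose length equals $\deg_G(v)=2k+1$; an odd closed walk of length $2k+1$ in the graph $G^{*}$ of odd-girth $2k+1$ must in fact be a $(2k+1)$-cycle $C$. By Observations~\ref{obs:DistanceDetmByCycle} and~\ref{obs:HomDistanceDetmByCycle}, $\phi$ is injective on $C$ and maps it to a $(2k+1)$-cycle $C'$ of $AT$. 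Applying the colour pattern established above to $C'$, the $2k+1$ edges incident to $v$ in $G$ (which are exactly the edges of $C$) receive the $2k+1$ distinct colours, with the blue colours in cyclic order $B$ and the red colours in cyclic order $R$; as $v$ was arbitrary, $G$ is $(B,R)$-edge-colourable. I expect the main obstacle to be step two, namely turning the ``a $(2k+1)$-cycle is a cross-torus geodesic plus one antipodal edge'' intuition into a clean argument valid for \emph{all} $(2k+1)$-cycles of $AT$, not only the canonical one built in Lemma~\ref{propertiesofAT}(iii); a secondary point to handle is that $\phi$ need not respect the planar rotation at $v$, so the cyclic orders obtained around $v$ are $B$ and $R$ only up to a common reversal, which is the sense intended by the definition (consistently with Theorem~\ref{thm:edgecolor-perm-C8++}).
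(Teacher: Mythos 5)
Your proposal is correct and follows essentially the same route as the paper: dualize exactly as in Theorem~\ref{thm:dual}, apply Theorem~\ref{thm:ToroidalGrid} to obtain a homomorphism $G^*\to AT(2k,2k)$, and pull back the canonical $e_1,\dots,e_{2k},J$-labelling, relying on the structure of $(2k+1)$-cycles of $AT(2k,2k)$ that the paper records in the paragraph preceding the statement. You in fact supply a proof of that cycle-structure claim, which the paper only asserts; your step ``odd number of $J$-edges, hence exactly one'' is most cleanly closed by the same label-sum argument you invoke (in any closed walk each $e_i$ appears with the same parity as $J$, so in a $(2k+1)$-cycle every one of the $2k+1$ labels occurs exactly once), which simultaneously yields the $k$ horizontal and $k$ vertical edges.
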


\section{Concluding remarks}\label{sec:remarks}

We conclude the paper with some remarks and open problems.

\begin{itemize}

\item[1.] As observed before, we have $\chi_c(\SPG)=\chi_c(K_3)$, $\chi_c(\SP{5})=\chi_c(C_8^{++})$, $\chi_c(\SP{7})=\chi_c(X_{15})$ and $\chi_f(\SPG)=\chi_f(K_3)$, $\chi_f(\SP{5})=\chi_f(C_8^{++})$, $\chi_f(\SP{7})=\chi_f(X_{15})$ (see~\cite{HZ00,PZ02a,PZ02b} and Corollary~\ref{cor:fractional} for the values of $\chi_c(\SP{2k+1})$ and $\chi_f(\SP{2k+1})$). We expect that this will generally be the case, that is, the optimal bound of odd-girth~$2k+1$ for $\SP{2k+1}$ should be a subgraph of $PC(2k)$ whose existence improves simultaneously the results on the circular and fractional chromatic numbers of $\SP{2k+1}$.

\item[2.] The \emph{generalized level $k$-Mycielski graph of $C_{2k+1}$}, denoted $M_{k}(C_{2k+1})$, is constructed as follows. We have $V(M_{k}(C_{2k+1}))=V_1\cup\ldots\cup V_{k}\cup\{v\}$, where $V_i=\{u^i_0,\ldots,u^i_{2k}\}$ for $1\leq i\leq k$. The first level, $V_1$, induces a $(2k+1)$-cycle $u^1_0,\ldots,u^1_{2k}$. For each level $V_i$ with $2\leq i\leq k$, vertex $u^i_j$ is adjacent to the two vertices $u^{i-1}_{(j-1)\bmod 2k+1}$ and $u^{i-1}_{(j+1)\bmod 2k+1}$ in the level $V_{i-1}$. Finally, vertex $v$ is adjacent to all vertices in $V_{k}$. Thus, $M_2(C_{5})$ is simply the classic Mycielski construction for $C_5$, that is, the Gr\"otzsch graph. For every $k\geq 1$, $M_{k}(C_{2k+1})$ has odd-girth~$2k+1$, is $4$-chromatic and is a subgraph of $PC(2k)$~\cite{P92}. Note that $M_2(C_{5})$ contains $C_8^{++}$ as a subgraph and therefore bounds $\SP{5}$. Similarly, $M_3(C_{7})$ contains $X_{15}$ as a subgraph and hence bounds $\SP{7}$. Thus, we conjecture that $M_{k}(C_{2k+1})$ bounds $\SP{2k+1}$. (Using Algorithm~\ref{algo}, we have verified this conjecture by a computer check for $k\leq 10$.) Since $M_{k}(C_{2k+1})$ has order $2k^2+k+1$, a confirmation of this conjecture would provide a family of smaller bounds than the augmented square toroidal grids (that have order $4k^2$).

\item[3.] Using the notion of walk-power, it is shown in~\cite{HNS15} that any bound of odd-girth at least $2k+1$ for $\SP{2k+1}$ is of order at least ${k+2 \choose 2}$. Thus, the bounds of Theorem~\ref{thm:ToroidalGrid} are optimal up to a factor of~$8$.

\item[4.] As a strengthening of Proposition~\ref{prop:ico}, does there exist a plane $k$-graph $G$ such that in any $k$-edge-colouring $c$ of $G$ and for any cyclic permutation of the $k$ colours, there is a vertex of $G$ where $c$ induces this permutation of colours?



\end{itemize}
%
%

\end{document}